%

\documentclass{amsart}

\usepackage{amsmath,amsxtra,amssymb,amsfonts,latexsym, mathrsfs, dsfont,bm,mathtools}
\usepackage{latexsym}
\usepackage{bbm}
\usepackage[usenames]{color}
\usepackage[hyperindex,pagebackref]{hyperref}

\usepackage[initials]{amsrefs}

\usepackage{comment}

\usepackage{graphics,epsf}         

\renewcommand{\theequation}{\arabic{section}.\arabic{equation}}
\newcommand\numberthis{\addtocounter{equation}{1}\tag{\theequation}}

\newtheorem{theorem}{Theorem}
\newtheorem{lemma}{Lemma}
\newtheorem{proposition}{Proposition}
\newtheorem{remark}{Remark}

\newtheorem{definition}{Definition}

\newtheorem*{theorem*}{Theorem}
\newtheorem*{lemma*}{Lemma}
\newtheorem*{proposition*}{Proposition}
\newtheorem*{remark*}{Remark}
\newtheorem*{example*}{Example}
\newtheorem*{exercise*}{Exercise}
\newtheorem*{definition*}{Definition}
\newtheorem*{corollary*}{Corollary}
\newtheorem*{notation*}{Notation}
\newtheorem*{claim*}{Claim}

\renewcommand{\q}{\quad}





\newcommand{\p}{\partial}

\newcommand{\wh}{\widehat}

\newcommand{\Id}{{\bf 1}}

\def\supp{{\text{\rm supp }}}

\begin{document}

\title[Uniform estimates]{Uniform estimates for Bilinear Hilbert Transform and Bilinear Maximal functions associated to polynomials}

\author{Xiaochun Li}
\address{Department of Mathematics, University of Illinois at Urbana-Champaign, Urbana, IL, 61801, USA}
\email{xcli@math.uiuc.edu}

\author{Lechao Xiao}
\address{Department of Mathematics, University of Illinois at Urbana-Champaign, Urbana, IL, 61801, USA}
\email{xiao14@math.uiuc.edu}

\date{\today}


\begin{abstract}
We study the bilinear Hilbert transform and bilinear maximal functions associated to polynomial curves and 
obtain uniform $L^r$ estimates for $r>\frac{d-1}{d}$ and this index is sharp up to the end point. 
\end{abstract}

\maketitle

\section{Introduction}
\label{intro}
Let $d\geq 2$ be a positive integer and $P(t) = a_dt^d+a_{d-1}t^{d-1}+\dots +a_2t^2$ be a polynomial of degree $d$ without linear term, 
where $a_d,\dots, a_2\in\mathbb{R}$. Let $\Gamma_P =(t,P(t))$, we define the bilinear Hilbert transform along $\Gamma_P$ as
\begin{equation} \label{defofH}
H_{\Gamma_P}(f,g)=p.v.\int_{\mathbb R} f(x-t)g(x-P(t))\frac{dt}{t}\,.
\end{equation}   
where $f(x)$ and $g(x)$ are Schwartz functions on $\mathbb{R}$.

For the bilinear Hilbert transform $H_{\Gamma}$,  
the $L^2\times L^2$ to $L^1$ estimate was studied in \cite{li2008} for the monomial curve.  Recently, 
the $L^2\times L^2 \rightarrow L^1$ result for $H_\Gamma$ was proved  to be true for the non-flat curve in \cite{lie}. In this paper, we
establish the $L^p\times L^q \rightarrow L^r$ estimates for the full range $p, q, r$, except for the end point case 
$r=\frac{d-1}{d}$.  In addition,  the operator norm is bounded uniformly because it only depends on the degree of the polynomial.
The method employed in this paper can be also used to obtain the uniform estimates for the bilinear maximal function $M_{\Gamma_p}$,
defined by
\begin{equation}\label{defofM}
M_{\Gamma_P}(f,g)=\sup_{\epsilon>0}\frac{1}{2\epsilon}\int_{-\epsilon}^{\epsilon} \left|f(x-t)g(x-P(t))\right| d{t}.
\end{equation}
More precisely, we have


\begin{theorem}\label{mainTH}
Let $P(t)$ be a polynomial without linear term. Then the operators $H_{\Gamma_P}$ defined as in (\ref{defofH}) can be extended to a bounded operator from $L^{p_1}\times L^{p_2}$ to $L^r$ for $r>\frac{d-1}{d}$, $p_1$, $p_2>1$ and $\frac{1}{r} = \frac{1}{p_1}+\frac{1}{p_2}$. In addition, the bound is uniform in a sense that it depends on $d$ but  independent of the coefficients of $P$.  
\end{theorem}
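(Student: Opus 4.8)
The plan is to prove Theorem~\ref{mainTH} by a time-frequency decomposition of the kernel $dt/t$ combined with an inductive scheme on the degree $d$, exploiting the fact that the estimate is required to be uniform in the coefficients. First I would split the multiplier $\mathrm{p.v.}\,1/t = \sum_{k\in\mathbb Z} \psi_k(t)$ into standard Littlewood--Paley pieces at scale $2^{-k}$, which reduces matters to bounding the single-scale bilinear operators $H_k(f,g)(x) = \int f(x-t)g(x-P(t))\psi_k(t)\,dt$ with summable constants. After the change of variables $t \mapsto 2^{-k}t$, the piece $H_k$ becomes a bilinear operator along the rescaled curve $(t, P_k(t))$ where $P_k(t) = a_d 2^{-kd} t^d + \dots + a_2 2^{-2k} t^2$; the coefficients of $P_k$ now fall into various dyadic ranges, and one separates the ``high-degree dominant'' regime (where the leading term genuinely controls the curvature) from regimes where lower-order terms dominate and an inductive appeal to the result for polynomials of degree $< d$ applies.

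In the dominant regime the curve is genuinely non-flat, and I would run the standard bilinear time-frequency / phase-space analysis: decompose $f$ and $g$ into wave packets adapted to tiles of area one, organize these into trees according to the Whitney decomposition of frequency space around the ``resonant'' frequency relations coming from the curve $\Gamma_{P_k}$, and prove a single-tree estimate together with an $L^2$-type tree-counting (Bessel / TT\* orthogonality) bound. The crucial quantitative input is that the curvature of $(t,P_k(t))$ on the unit scale is bounded below by a positive power of the size of the dominant coefficient, which yields decay in $k$ after summing over the dyadic ranges; this is where the hypothesis of no linear term is used, since it guarantees genuine curvature rather than a shear that could be removed by a modulation. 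To get the full range $r > \tfrac{d-1}{d}$ with $p_1,p_2>1$ rather than only $L^2\times L^2\to L^1$, I would combine these single-scale estimates with $\mathrm{TT}^*$ or interpolation against the trivial $L^\infty$ bounds and use the restricted weak-type / multilinear interpolation machinery, tracking that all constants depend only on $d$.

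The main obstacle I expect is the interface between scales and the induction: after rescaling, each $H_k$ is an operator along a curve whose ``effective degree'' can be any $d' \le d$ depending on $k$, and one must patch together the degree-$d'$ uniform estimates (available by induction) for the ranges of $k$ where a lower-order term dominates, with the genuinely degree-$d$ analysis for the remaining ranges, all while keeping the geometric series in $k$ convergent with a $d$-only constant. A secondary difficulty is the uniformity itself: the wave-packet decomposition and tree estimates must be carried out with constants that do not blow up as a coefficient $a_j \to 0$ or $\to \infty$, which forces the dyadic-pigeonholing of the coefficients to be done carefully and the ``non-stationary'' (small curvature) contributions to be absorbed either into the inductive term or into an error with extra decay. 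Handling the bilinear maximal function $M_{\Gamma_P}$ runs in parallel: one replaces the oscillatory single-scale estimates by positive ones and uses a square-function / $\ell^2$-valued version of the same decomposition, so I would set up the argument so that the key single-scale lemma is stated in a form that covers both $H_{\Gamma_P}$ and $M_{\Gamma_P}$ at once.
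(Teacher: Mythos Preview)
Your proposal captures the overall architecture correctly---dyadic decomposition of $1/t$, separating scales according to which monomial of $P$ dominates, and time-frequency analysis in the dominant regime---but it misidentifies where the threshold $r>\tfrac{d-1}{d}$ actually comes from, and this is a genuine gap. In the paper's argument, the monomial-dominant scales (where $|a_l (2^{-j})^l|$ beats every other term by a large factor, for \emph{some} $2\le l\le d$) are handled by treating $P$ as a small perturbation of $a_l t^l$; the resulting estimate (Theorem~\ref{goodL} in the paper) is valid all the way down to $r>\tfrac12$ and uses the $\sigma$-uniformity/$TT^*$ machinery plus the tree-based time-frequency analysis you describe. The restriction $r>\tfrac{d-1}{d}$ arises \emph{only} from the single-scale pieces $T_j$ at the finitely many transition scales where no monomial dominates. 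For those, one needs a nontrivial $L^{p_1}\times L^{p_2}\to L^r$ bound for a single $T_j$ with $r<1$, and this is obtained by decomposing in sublevel sets of $|(P(2^{-j}t))'|$ and of the Jacobian $|(P(2^{-j}t))'-2^{-j}|$, applying the van der Corput sublevel-set lemma $|\{t:|u(t)|\le\alpha\}|\le C_k\alpha^{1/k}$, and interpolating a change-of-variables $L^1\times L^1\to L^{1/2}$ bound against an $L^p\times L^{p'}\to L^1$ bound. The exponent $\tfrac{1}{d-1}$ in the sublevel estimate is exactly what produces $r>\tfrac{d-1}{d}$. None of this is in your proposal: you speak of ``interpolation against the trivial $L^\infty$ bounds,'' which only gets you to $r\ge 1$.

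Two further points. First, your inductive reduction to polynomials of lower degree does not quite work as stated: when a lower-order term $a_l t^l$ dominates at scale $j$, the higher-order terms are small but present, so you cannot literally reduce to a degree-$l$ polynomial; you must treat $P$ as a perturbed monomial, and then the analysis is the same for every $l$---no induction is needed. Second, your plan relies on a ``geometric series in $k$'' to sum the transition scales, but in fact those scales do not decay: the crucial combinatorial observation (Lemma~\ref{keyL} in the paper, via pigeonhole on the exponents $\log_2|a_k|$) is that the set of $j$ for which \emph{no} monomial dominates has cardinality bounded by a constant depending only on $d$ and the threshold parameter $N$. This finiteness is what makes it sufficient to prove a uniform single-scale bound there, and it is what replaces the summability you were looking for.
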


\begin{theorem}\label{maxbi}
Let $p_1,p_2$ and $r$ be the same as Theorem \ref{mainTH}, then the bilinear maximal operator 
 $M_{\Gamma_P}$ can be extended to a bounded operator from $L^{p_1}\times L^{p_2}$ to $L^r$ and the bound is also uniform.   
\end{theorem}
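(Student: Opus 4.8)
The plan is to derive Theorem~\ref{maxbi} from the circle of ideas used for Theorem~\ref{mainTH}, since the bilinear maximal function is morally a ``positive'' version of the bilinear Hilbert transform and shares the same scaling obstructions. First I would perform the standard Littlewood--Paley / Whitney decomposition of the kernel $\frac{1}{2\e}\mathbf{1}_{[-\e,\e]}$ in the averaging operator: write $A_k(f,g)(x) = \int_{|t|\sim 2^{k}} f(x-t)g(x-P(t))\,\rho(2^{-k}t)\,\tfrac{dt}{2^{k}}$ for a fixed bump $\rho$, so that $M_{\Gamma_P}(f,g) \lesssim \sup_k |A_k(f,g)|$. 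Because $P$ has no linear term, on the piece $|t|\sim 2^k$ the curve $(t,P(t))$ is, after the anisotropic rescaling $t\mapsto 2^k t$, $x_1\mapsto 2^k x_1$, $x_2 \mapsto (\text{leading relevant power of }2^k) x_2$, a polynomial of the same shape with coefficients living in a compact set; this is exactly where the \emph{uniformity} in the coefficients will come from, as in the proof of Theorem~\ref{mainTH}. The point is to reduce, uniformly, to a single-scale multiplier estimate plus a square-function/maximal bound that sums the scales.

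The key steps, in order: (1) Single-scale estimate. Show that each $A_k$ maps $L^{p_1}\times L^{p_2}\to L^r$ with a bound uniform in $k$ and in the coefficients of $P$; by the rescaling above this is a fixed oscillatory-integral / stationary-phase estimate for a non-flat curve, which can be extracted from the analysis already carried out for $H_{\Gamma_P}$ (the same $TT^*$ and $\sigma$-uniform van der Corput estimates). (2) Decay between scales. Obtain, via integration by parts in frequency (exploiting the curvature coming from the nonlinear terms of $P$), an off-diagonal gain: when the two input frequencies are localized to $|\xi_1|\sim 2^{j_1}$, $|\xi_2|\sim 2^{j_2}$, the operator $A_k$ restricted to these frequency blocks gains a factor like $2^{-\delta|j_i + c k|}$ away from the resonant scale $k\sim -j_i/(\text{degree})$. (3) Maximal summation. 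Replace the pointwise supremum over $k$ by an $\ell^2_k$ (or $\ell^r_k$) sum of the good pieces — using that for fixed localized frequencies only $O(1)$ scales are resonant, and for the non-resonant scales the geometric decay from step (2) lets one sum trivially, while a Sobolev-embedding / $\sup_k \le (\sum_k |\partial_k|^2)^{1/2}$-type bound controls the resonant band. This yields $\|\sup_k |A_k(f,g)|\|_r \lesssim \|f\|_{p_1}\|g\|_{p_2}$ for $r > \frac{d-1}{d}$, and the $r=\frac{d-1}{d}$ endpoint is excluded exactly as in Theorem~\ref{mainTH}.

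The main obstacle I expect is step (3): passing from single-scale bounds to the maximal (supremum) bound in the genuinely bilinear setting below $r=1$. In the linear theory one would square-function the differences $A_{k+1}-A_k$ and sum, but here $L^r$ with $r<1$ is only a quasi-Banach space, so Minkowski/duality arguments are unavailable and one must instead run the summation at the level of the frequency-localized pieces before taking the supremum, keeping careful track that the ``number of resonant scales is $O(1)$'' statement holds uniformly in the coefficients of $P$ (this is where the no-linear-term hypothesis and the compactness-after-rescaling are doing real work). A secondary technical point is that the kernel $\mathbf{1}_{[-\e,\e]}$ is not smooth, so one first smooths it at the cost of an easy maximal-function error term; and one must check that the non-flatness / curvature hypotheses survive the anisotropic rescaling at every scale, which again reduces to the degree-$d$ structure of $P$. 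Modulo these points, Theorem~\ref{maxbi} follows by the same machinery, with the sharpness of $\frac{d-1}{d}$ inherited from the example that shows it is sharp for $H_{\Gamma_P}$.
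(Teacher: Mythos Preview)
Your outline shares the paper's overall decomposition---dyadic scales $A_k\sim T_j$, split into $J_{\rm good}(N)$ (finitely many scales, handled by Theorem~\ref{badL}) versus the $J_l(N)$ (monomial-like after rescaling), and frequency localization via the multipliers $\mathcal M_{m,n}$---so the skeleton is correct. The divergence is entirely in your step~(3), which you yourself flag as the main obstacle: how to pass from single-scale bounds to the supremum when $r<1$.

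The paper sidesteps this completely with the crude pointwise bound $\sup_j \le \sum_j |\,\cdot\,|$. On $J_{\rm good}(N)$ this is free since $\#J_{\rm good}(N)=O_d(1)$ by Lemma~\ref{keyL}. On $J_l^*(N)$, for the diagonal piece $m=n>0$, the paper writes
\[
\sup_{j\in J_l^*(N)} \big|T^{(m,m)}_{j+j_l}(f,g)(x)\big| \;\le\; \sum_{j\in J_l^*(N)} \big|T^{(m,m)}_{j+j_l}(f,g)(x)\big|,
\]
and observes that the right-hand side is precisely the positive operator $|T|^l_m$ analyzed in Section~\ref{rnot1}. The point is that Proposition~\ref{progoal} (and hence, after interpolation with~(\ref{single}), the decay $2^{-\varepsilon m}$ of Theorem~\ref{below11}) was deliberately proved for the operator with absolute values placed \emph{inside} both the $t$-integral and the $j$-sum, exactly so that it could be recycled here. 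Thus no square-function, no Sobolev embedding, no $A_{k+1}-A_k$ differences are needed; the quasi-Banach issue you worry about never arises because one never sums operators in $L^r$, only pointwise. The off-diagonal frequency pieces ($m\not\sim n$ or $\max\{m,n\}<0$) are dispatched by Fourier series / Taylor expansion of the symbol, yielding the pointwise domination $C\,Mf(x)\,Mg(x)$.

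Your proposed route---geometric decay away from a resonant scale plus a square-function bound on the resonant band---could likely be made to work, but it duplicates effort: you would be re-deriving, inside the maximal argument, estimates that the paper intentionally proved in positive form the first time around. The paper's approach buys Theorem~\ref{maxbi} essentially for free once the machinery for Theorem~\ref{mainTH} is in place.
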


This theorem on maximal functions can be viewed as a bilinear analogue of Theorem 3 in \cite{wr}.   It is not difficult to 
construct a polynomial $P$ with no linear term 
such that $H_{\Gamma_P}$ and $ M_{\Gamma_P}$ are unbounded whenever $r<{(d-1)}/d$
(see Section \ref{bad case}).  For general polynomials (containing linear term), the problem is harder, especially for the full range 
$p_1, p_2, r$, since it involves the results of the uniform estimates for the bilinear Hilbert transform (see \cite{gr}, \cite{li06}, \cite{th})
 and  for the bilinear  maximal  transform (see \cite{la}).   
The constraint of $r>(d-1)/d$ is superfluous if there is some additional convexity condition for the curve. 
For instance,  for the monomial curve, i.e. $\Gamma_d = (t, t^d)$ and $d\geq 2$, we can extend the range of $r$ to $(\frac{1}{2}, \infty)$, 

\begin{theorem}\label{seTH} 
Let $\frac{1}{r} = \frac{1}{p_1}+ \frac{1}{p_2}$ where $r>\frac{1}{2}$, $p_1$, $p_2>1$ and $\Gamma(t) = (t, t^d)$. Then $H_\Gamma$
($M_\Gamma$)  can be extended to a bounded operator from $L^{p_1}\times L^{p_2}$ to $L^r$. 
\end{theorem}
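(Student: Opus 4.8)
The plan is to bootstrap from Theorem \ref{mainTH} (for $H_\Gamma$) and Theorem \ref{maxbi} (for $M_\Gamma$) by exploiting the extra convexity/homogeneity of the monomial curve $\Gamma(t)=(t,t^d)$, which is exactly what is missing from the general polynomial case and what forces the restriction $r>(d-1)/d$ there. For the monomial curve one has a genuine scaling symmetry: $t\mapsto \la t$ maps $\Gamma$ to a dilate of itself, which means the operators $H_\Gamma$ and $M_\Gamma$ interact cleanly with Littlewood--Paley decompositions and do not suffer from the interference between scales that produces the $(d-1)/d$ bottleneck for polynomials with several nontrivial coefficients.

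First I would dispose of the case $r\ge (d-1)/d$, which is already covered by Theorems \ref{mainTH} and \ref{maxbi} with uniform (hence in particular finite) bounds for the curve $(t,t^d)$. So the content is the range $\tfrac12<r<\tfrac{d-1}{d}$, i.e. one of $p_1,p_2$ can be taken large while the other stays above $1$. Here I would decompose dyadically in $t$: write $H_\Gamma(f,g)=\sum_{k\in\ZZ} H_k(f,g)$ where $H_k$ integrates over $|t|\sim 2^k$ against a smooth cutoff (and similarly split $M_\Gamma$ into dyadic pieces $M_k$, taking a supremum over $k$ rather than a sum). By the homogeneity $P(t)=t^d$, each $H_k$ is, after rescaling $x\mapsto 2^k x$ and $t\mapsto 2^k t$, a fixed bilinear operator with kernel supported in $|t|\sim 1$; such single-scale pieces are easy to control on all of $L^{p_1}\times L^{p_2}\to L^r$ with $1/r=1/p_1+1/p_2$ and a decay gain in $k$ coming from cancellation (for $H_k$, the mean-zero property of $dt/t$) or from the trivial $L^\infty$ bound (for $M_k$). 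The main step is then to sum these pieces: one interpolates the single-scale bounds — which have geometric decay in $|k|$ at the "easy" exponents but only boundedness (or mild growth) near the critical line — against the global bound from Theorem \ref{mainTH}/\ref{maxbi} at $r$ slightly above $(d-1)/d$. A multilinear interpolation (e.g. along the lines of Thiele's or Grafakos--Tao's interpolation for such tensor-type families, or a direct $\ell^r$-summation once $r<1$ is exploited via the inequality $\|\sum_k F_k\|_r^r\le\sum_k\|F_k\|_r^r$) then yields boundedness for all $r>1/2$.

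The hard part will be obtaining the single-scale estimates with a decay rate in $k$ that survives multilinear interpolation down to $r=1/2$: near the critical exponents the naive single-scale bound is only $O(1)$ in $k$, so one needs a genuine gain — a power $2^{-\de|k|}$ with $\de>0$ — at a pair of exponents straddling the target, and producing that gain requires oscillatory-integral/$\sigma$-uniformity estimates for the multiplier of $H_k$ (stationary phase adapted to the curvature of $t^d$) rather than soft arguments. Equivalently, one must show the single-scale operators improve $L^{p_1}\times L^{p_2}\to L^r$ with exponential off-diagonal decay; once that is in hand, the summation over $k\in\ZZ$ and the passage from $H_\Gamma$ to $M_\Gamma$ (replacing $\sum_k$ by $\sup_k$, which is harmless since the pieces already decay) are routine, and the final range $r>1/2$ is dictated by where the single-scale improving estimates cease to hold.
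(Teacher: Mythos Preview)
There is a genuine gap. You correctly note the scaling symmetry of the monomial, but then claim the single-scale pieces $H_k$ enjoy operator-norm decay $2^{-\de|k|}$ at some ``easy'' exponents, to be interpolated against uniform bounds elsewhere and summed. No such decay exists at any exponent: testing on dilated inputs $f(2^{-k}\cdot)$, $g(2^{-k}\cdot)$ and rescaling back shows that $\|H_k\|_{L^{p_1}\times L^{p_2}\to L^r}$ is bounded \emph{below} by a fixed positive constant independent of $k$ (in the limit the rescaled operator degenerates to a pointwise product against a fixed convolution, which is nonzero). The mean-zero property of $dt/t$ gives decay of $H_k(f,g)$ only for \emph{fixed} smooth inputs, not in operator norm; and for the maximal pieces $M_k$ there is no cancellation to exploit at all. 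Hence neither the $\ell^r$-summation trick $\|\sum_k F_k\|_r^r\le\sum_k\|F_k\|_r^r$ nor any interpolation of single-scale bounds can close. Incidentally, your rescaling $x\mapsto 2^kx$, $t\mapsto 2^kt$ does not turn $H_k$ into a fixed operator either, since $t^d$ scales by $2^{kd}$ rather than $2^k$; the relevant symmetry is anisotropic.

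The paper proceeds differently and never tries to sum single-scale bounds. For $P(t)=t^d$ only one coefficient is nonzero, so in the splitting of Section~2 one has $J_{\rm good}(N)=\{|j|<N\}$ and $J_{\rm bad}(N)=J_d(N)=\{|j|\ge N\}$. Theorem~\ref{goodL} then controls $\sum_{j\in J_d(N)}T_j$ \emph{as a whole}, already for the full range $r>\tfrac12$, via a further \emph{frequency} decomposition $T_j=\sum_m T_{j,m}$: the summable decay is $2^{-\e m}$ in the parameter $m$ (Propositions~\ref{L1} and \ref{below1}, Theorem~\ref{below11}), produced by oscillatory-integral and time-frequency analysis carried out across all scales $j$ simultaneously. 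The finitely many remaining scales $|j|<N$ are handled one at a time by the methods of Section~\ref{bad case}; for the monomial these reach down to $r>\tfrac12$ because $P'(t)-1=dt^{d-1}-1$ has only simple roots (cf.\ Theorem~\ref{P&r} with $k=2$). Bootstrapping from Theorems~\ref{mainTH} and~\ref{maxbi}, as you propose, cannot lower the threshold below $(d-1)/d$; the point is rather that the sole obstruction there, Theorem~\ref{badL}, becomes vacuous for the monomial because $J_{\rm good}(N)$ is finite.
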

It is then natural to ask the following question: for a given polynomial $P(t)$, what is the lower bound of $r$ such that
 $H_{\Gamma_P}$ and $M_{\Gamma_P}$ map from $L^{p_1}\times L^{p_2}$ to $L^r$? The following theorem provides a complete answer to this question for $P(t)$ without linear term and characterizes the lower bound of $r$ by the decay factor of the level set estimate: $|\{t:|P'(t)-1|<h\}|$. 
\begin{theorem}\label{P&r} 
Let $P(t)$ be a polynomial without linear term, the following are equivalent:\\
{\rm(i)} All the roots of $P'(t)-1=0$ has order at most (k-1);\\
{\rm(ii)} There is a constant $C_P=C(P)$ s.t. the following level set estimate is true for $h$ sufficiently small
 $$|\{t:|P'(t)-1|<h\}|< C_Ph^{\frac{1}{k-1}}.$$
{\rm(iii)}  $H_{\Gamma_P}$ and $M_{\Gamma_P}$ maps from $L^{p_1}\times L^{p_2}$ to $L^r$ for all $r>\frac{k-1}{k}$, $p_1$, $p_2>1$ satisfying $\frac{1}{r} = \frac{1}{p_1}+ \frac{1}{p_2}$.
\end{theorem}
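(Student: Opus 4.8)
The plan is to prove the cycle of implications (i) $\Rightarrow$ (ii) $\Rightarrow$ (iii) $\Rightarrow$ (i), since each arrow is amenable to a different technique. The implication (i) $\Rightarrow$ (ii) is elementary: if every root of $P'(t)-1$ has order at most $k-1$, then near each such root $t_j$ we may write $P'(t)-1 = c_j(t-t_j)^{m_j}(1+o(1))$ with $1 \le m_j \le k-1$, so $|P'(t)-1| < h$ forces $|t-t_j| \lesssim h^{1/m_j} \le h^{1/(k-1)}$ on the relevant range of small $h$; away from the (finitely many) real roots $|P'(t)-1|$ is bounded below on compact sets, and at infinity the leading term $d\,a_d t^{d-1}$ dominates, so the level set is contained in a bounded region. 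Summing the $O(1)$-many contributions gives the bound $C_P h^{1/(k-1)}$. One must be a little careful to absorb the $o(1)$ factors uniformly, but this is routine.

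The implication (ii) $\Rightarrow$ (iii) is the analytic heart of the matter and is exactly where the machinery behind Theorems \ref{mainTH} and \ref{maxbi} is meant to be reused. The point is that the proofs of those theorems do not genuinely use ``degree $d$'' — they use the level set bound $|\{t : |P'(t)-1| < h\}| \lesssim h^{1/(d-1)}$, which for a general polynomial without linear term always holds with exponent $1/(d-1)$ (since $P'-1$ has degree $d-1$, hence all roots of order $\le d-1$). The strategy is therefore to revisit the proof of Theorem \ref{mainTH}: localize $t$ dyadically, apply a $TT^*$ / stationary phase analysis on each piece, and observe that every place where the exponent $\frac{d-1}{d}$ entered can be traced back to an application of the level set estimate with exponent $\frac{1}{d-1}$. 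Replacing that input by the hypothesis (ii) with exponent $\frac{1}{k-1}$ propagates through and yields boundedness for all $r > \frac{k-1}{k}$; the maximal operator $M_{\Gamma_P}$ is handled by the same reduction used to deduce Theorem \ref{maxbi} from Theorem \ref{mainTH}. The main obstacle I expect here is bookkeeping: one has to make sure that the level set estimate is the \emph{only} place the degree enters, and in particular that the oscillatory-integral decay estimates and the interpolation/$L^2$ endpoint arguments are all phrased in terms of the exponent $1/(k-1)$ rather than $1/(d-1)$.

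The implication (iii) $\Rightarrow$ (i) is the sharpness direction and is proved by contraposition: assuming $P'(t)-1$ has a real root $t_0$ of order $\ge k$, we construct test functions showing $H_{\Gamma_P}$ (and $M_{\Gamma_P}$) fails to map $L^{p_1} \times L^{p_2} \to L^r$ for $r$ slightly below $\frac{k}{k+1} \le$ (the purported threshold corresponding to a root of order $k$, which is $> \frac{k-1}{k}$). This is the content of the construction promised in Section \ref{bad case}: translating so that $t_0 = 0$, near the origin $P(t) = t + c\,t^{k+1} + \cdots$, so the curve is $O(h^{k+1})$-tangent to the diagonal on $|t| < h$; feeding in $f = \mathbbm{1}_{[0,1]}$ and $g = \mathbbm{1}_{[0,\de]}$ with $\de$ small, one checks that on a set of $x$ of measure $\sim \de^{1/(k+1)}$ the integrand is $\gtrsim 1$ over a $t$-interval of length $\sim \de^{1/(k+1)}$, forcing $\|H_{\Gamma_P}(f,g)\|_r \gtrsim \de^{1/(k+1)} \cdot \de^{1/((k+1)r)}$, which outpaces $\|f\|_{p_1}\|g\|_{p_2} \sim \de^{1/p_2}$ precisely when $r < \frac{k}{k+1}$ after optimizing — hence (iii) must fail for such $r$, and since (iii) asserts boundedness for \emph{all} $r > \frac{k-1}{k}$, this rules out roots of order $\ge k$. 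I do not expect genuine difficulty here beyond choosing the extremizers and the exponents correctly; the one subtlety is that the example must be built at a \emph{real} root of $P'-1$, which is why the statement is naturally about real roots and their orders.
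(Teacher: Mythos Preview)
Your overall architecture matches the paper's: (i)$\Leftrightarrow$(ii) is elementary, (ii)$\Rightarrow$(iii) by rerunning the boundedness proof with the improved level-set exponent, and (iii)$\Rightarrow$(i) via a tangency-based counterexample. Two points need correction, however.

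For (ii)$\Rightarrow$(iii): the level-set exponent does \emph{not} enter through the $TT^*$/stationary-phase analysis. That machinery lives in Theorem~\ref{goodL} (the $J_{\rm bad}(N)$ piece) and already delivers boundedness for all $r>\tfrac12$, uniformly, with no dependence on the root structure of $P'-1$. The only place the threshold $r>\tfrac{d-1}{d}$ appears is in Theorem~\ref{badL}, the single-scale estimate for $j\in J_{\rm good}(N)$, and there only in Case~3 of Subsection~\ref{proof_badL}: when $|(P(2^{-j}t))'|\sim 2^{-j}$ one decomposes $E_0$ into the level sets $E_0(h)$ and uses Lemma~\ref{van} to get $|E_0(h)|\le C_d\, h^{1/(k_0-1)}$ with $k_0\le d$. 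Replacing that single bound by hypothesis (ii), namely $|E_0(h)|\le C_P\, h^{1/(k-1)}$, and interpolating (\ref{goal1}) with (\ref{goal2}) gives $\|T_{j,h}(f,g)\|_r\le C h^\epsilon$ for $r>\tfrac{k-1}{k}$; summing in $h$ and combining with the unchanged Theorem~\ref{goodL} yields (iii). The bookkeeping you worried about is thus localized to one inequality, and the oscillatory-integral side needs no modification at all.

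For (iii)$\Rightarrow$(i): you cannot translate $t_0$ to the origin, because the kernel $\tfrac{dt}{t}$ is not translation-invariant and becomes singular exactly where you want to work. The paper instead exploits that $P$ has no linear term, so $P'(0)=0\ne1$ forces $t_0\ne0$, and builds the example directly near $t_0$: take $f=\Id_{[-\delta,\delta]}$, $g=\Id_{[t_0-P(t_0)-\delta,\,t_0-P(t_0)+\delta]}$, and restrict $x$ to an interval of length $\sim\delta^{1/(k_0+1)}$ about $t_0$. For such $x$ and $|t-x|<\delta/100$ one has $|t-t_0|\lesssim\delta^{1/(k_0+1)}$, hence $|(t-P(t))-(t_0-P(t_0))|\lesssim\delta$, so both characteristic functions equal $1$ and the integrand is $\sim 1/t_0$ over a $t$-interval of length $\sim\delta$ (not $\delta^{1/(k_0+1)}$ as you wrote). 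This gives $\|H_{\Gamma_P}(f,g)\|_r\gtrsim\delta\cdot\delta^{1/(r(k_0+1))}$ against $\|f\|_{p_1}\|g\|_{p_2}\sim\delta^{1/r}$, forcing $r\ge\tfrac{k_0}{k_0+1}$; if (iii) holds this rules out $k_0\ge k$. Your choice $f=\Id_{[0,1]}$, $g=\Id_{[0,\delta]}$ together with a $t$-interval of length $\delta^{1/(k+1)}$ does not produce the correct exponent balance.
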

Although we don't have a uniform boundedness as in Theorem \ref{mainTH} and \ref{maxbi}, still a weaker one holds: 
the bounds of the operators $H_{\Gamma_P}$ and $M_{\Gamma_P}$ depend only on the degree $d$ and the constant $C_P$ in (ii) above. 
The whole picture of this point, as well as Theorem \ref{P&r}, will become transparent in Section \ref{bad case}.

This type of problems is not only motivated by the classical questions on Hilbert transforms along curves (see \cite{S-W}), but also 
arises from the non-conventional ergodic average (see \cite{Furst}, \cite{li2008}). For instance, for $n\in\mathbb N$, consider 
\begin{equation}
 T^*(f_1, f_2)(n) = \sup_{M\in\mathbb N} \big |\frac{1}{M}  \sum_{m=1}^M f_1(n-m) f_2(n-m^2)\big|\,.
\end{equation}
In \cite{Hu-Li}, it was shown that $T^*$ can be extended to a bounded operator from $\ell^2\times \ell^2$ to $\ell^r$ provided that $r>1$. 
Because there is no transference principle available, it is not clear that Theorem \ref{maxbi} implies any boundedness of $T^*$.
A very interesting question is to build up $\ell^2\times \ell^2\rightarrow \ell^1$ estimate for $T^*$, from which 
the pointwise convergence of the non-conventional dynamic system follows. The circle method and(or) large sieve method 
are expected to resolve the problem.

The method used in this paper essentially works for more general curves on nilpotent groups. We shall not pursue this 
in this article. Besides the generalisation to more general curves, it is natural to ask whether one can extend the results to 
the multilinear cases and/or the higher dimensional cases (see \cite{li2008}).


\section{Main Structure of the Proof}
\setcounter{equation}0

Unfortunately, the proof of Theorem \ref{mainTH} has to be quite technical, since it involves
the uniform bounds. 
In this section, we sketch the proof of Theorem \ref{mainTH} to present a clear picture for the reader. 
In order to build up the uniform estimates, 
first we introduce some technical notations associated to the polynomial. 
Let $P(t) = \sum\limits_{k=1}^{d}a_kt^k$ and $N$ be a sufficiently large positive integer,  say,  $N>2^{100d!}$. 
For $l =1, \dots, d$, we define $J_l(N)$ as 
\begin{equation}\label{good}
J_l(N)= \left\{j\in\mathbb{Z}: {|j|\geq N},\, |a_l(2^{-j})^l|>2^{N+2d}|a_k(2^{-j})^k|, \,\mbox{for all\,}k\neq l  \right\}.
\end{equation}
Then set $J_{\rm bad}(N)$ and $J_{\rm good}(N)$, respectively,  to be 
\begin{equation}
J_{\rm bad}(N) := \bigcup_{l=1}^{d}J_l(N),
\end{equation}
and $$ J_{\rm good}(N) := \mathbb{Z}\backslash J_{\rm bad}(N).$$
 The $J_l(N)$ may be empty for some $l$ and can be considered essentially as the collection of 
 dyadic numbers at which the $l$-th term in the polynomial  $P$ dominates all other terms. 
Henceforth,  whenever $j\in J_l(N)$ and $|t| \sim 2^{-j}$, the polynomial $P$ behaves almost the same as the monomial $a_lt^l$. 
The following lemma asserts that the cardinality of $J_{\rm good}(N)$ is 
majorized by a constant which is independent of the coefficients of $P(t)$. 
This uniform upper bound is crucial in our proof for  Theorem \ref{mainTH}.
\begin{lemma}\label{keyL}The upper bound of the cardinality of $J_{\rm good}(N)$ depends only on $N$ and the $degree$ d of $P(t)$, more precisely  
$$\#J_{\rm good}(N)\leq (2(N+2d)+1)d(d-1)+(2N-1).$$ 
\end{lemma}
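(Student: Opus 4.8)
The plan is to bound the size of $J_{\rm good}(N)$ by showing that, outside a controlled set of small $|j|$, any dyadic scale $2^{-j}$ at which no single term dominates must lie close to a ``crossover'' scale where two particular terms are comparable --- and there can be only boundedly many such crossovers. First I would dispose of the trivial part: all $j$ with $|j|<N$ contribute at most $2N-1$ indices, so it suffices to estimate $\#\bigl(J_{\rm good}(N)\cap\{|j|\ge N\}\bigr)$. For the rest, fix $j$ with $|j|\ge N$ and $j\in J_{\rm good}(N)$; by definition, for \emph{every} $l$ there exists some $k=k(l)\ne l$ with $|a_l(2^{-j})^l|\le 2^{N+2d}|a_{k}(2^{-j})^{k}|$. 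In particular, taking $l$ to be an index achieving $\max_l |a_l(2^{-j})^l|$, we get two distinct indices $k<l$ (after relabeling) with
\begin{equation}
2^{-(N+2d)}\le \frac{|a_k(2^{-j})^k|}{|a_l(2^{-j})^l|}=\frac{|a_k|}{|a_l|}\,2^{j(l-k)}\le 2^{N+2d}.
\end{equation}

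The key observation is that for a \emph{fixed} ordered pair $(k,l)$ with $k<l$, the quantity $\frac{|a_k|}{|a_l|}2^{j(l-k)}$ is strictly monotone in $j$ (it multiplies by $2^{l-k}\ge 2$ each time $j$ increases by one), so the set of integers $j$ for which it lies in the interval $[2^{-(N+2d)},2^{N+2d}]$ is an interval of at most $\bigl\lfloor \frac{2(N+2d)}{l-k}\bigr\rfloor +1 \le 2(N+2d)+1$ consecutive integers. Thus each of the $\binom{d}{2}=\frac{d(d-1)}{2}$ ordered pairs contributes at most $2(N+2d)+1$ values of $j$, but since the role of ``which term dominates'' can make either member of the pair play the part of $l$, I would more safely count over all ordered pairs $(k,l)$, $k\ne l$, of which there are $d(d-1)$, giving the bound $(2(N+2d)+1)\,d(d-1)$ for the number of admissible $j$ with $|j|\ge N$. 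Adding the $2N-1$ indices with $|j|<N$ yields exactly
\begin{equation}
\#J_{\rm good}(N)\le (2(N+2d)+1)\,d(d-1)+(2N-1).
\end{equation}

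The main technical point to get right --- and the only place any care is needed --- is the passage from ``$j\in J_{\rm good}(N)$'' to ``some admissible pair $(k,l)$ with comparable terms exists,'' ensuring the pair can be chosen so that the comparability estimate genuinely traps $j$ in a short interval; the monotonicity argument then makes each pair cheap. One must be slightly careful that the coefficients $a_l$ appearing could vanish: if $a_l=0$ the $l$-th term never dominates and contributes nothing, which only helps, so we may assume all relevant $a_l\ne 0$ when extracting the pair. No genuine obstacle is expected here --- the lemma is combinatorial and the constant is exactly what the two-sided geometric-progression count produces --- but it is worth stating the monotonicity/interval-length computation cleanly, since that is the mechanism delivering the crucial coefficient-independence of the bound.
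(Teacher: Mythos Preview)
Your proposal is correct and follows essentially the same approach as the paper: isolate the $2N-1$ indices with $|j|<N$, then for each remaining $j\in J_{\rm good}(N)$ pick a maximizing index to produce a pair $(k,l)$ with $|a_k(2^{-j})^k|$ and $|a_l(2^{-j})^l|$ within a factor $2^{N+2d}$ of one another, and use monotonicity in $j$ to bound the number of admissible $j$ per pair by $2(N+2d)+1$, summing over all $d(d-1)$ ordered pairs. The paper's proof is the same argument phrased logarithmically via $|a_l|=2^{b_l}$; your explicit monotonicity computation is a slight expository improvement over the paper's terse ``by (\ref{mn})'' step.
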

\begin{proof}
The proof is a simple application of pigeonhole principle. Let $|a_l|= 2^{b_l}  $. Observe that if $j\in J_{\rm good}(N)$, then there exists two integers $1\leq m\neq n\leq d $ such that 
\begin{align}\label{mn}
m(-j)+b_m+(N+2d)\geq n(-j)+b_n\geq m(-j)+b_m-(N+2d).
\end{align}
Thus
$$J_{\rm good}(N) \subset \left(\bigcup_{1\leq m\neq n \leq d}J_{\rm good}(N, m, n)\right)\bigcup \{|j|<N\},$$ 
where 
\begin{align*}
J_{\rm good}(N, m, n) = \left\{j\in J_{\rm good}(N): (m,n) \mbox{ satisfying  } (\ref{mn})\right\}.
\end{align*}
The cardinality of $\{|j|<N\}$ equals $(2N-1)$ and by (\ref{mn}), the cardinality of $J_{\rm good}(N, m, n)$ is at most $(2(N+2d)+1)$ .
Noticing that the number of different pairs $(m,n)$ is $d(d-1)$, the conclusion of the lemma is obvious now. 
\end{proof}
\begin{remark}
The reader may notice that the upper bound in the above lemma is far from sharp, however it is enough for our application. 
\end{remark} 

The quantity $N$ is chosen to depend only on $d, p_1$ and $p_2$,  but sufficiently large for the technical reason. 
Basing on Lemma \ref{keyL}, we will decompose the operator $H_{\Gamma_P}(f,g)$ into two components.
 First, let $\frac{1}{t} = \sum\limits_{j\in \mathbb{Z}}\rho_j(t)$, where $\rho_j(t)$ is a smooth odd function, supported on 
$(2^{-j-1},  2^{-j+1})\cup (-2^{-j+1},  -2^{-j-1})$. Indeed, $\rho_j(t) =2^j\rho(2^jt)$, where $\rho(t) = \rho_0(t)$. Set $T_j(f,g)(x)$ as
\begin{equation}\label{defofTj}
T_j(f,g)(x) = \int f(x-t)g(x-P(t))\rho_j(t) dt. 
\end{equation}
We then write the bilinear Hilbert transform $H_{\Gamma_P}$ as
\begin{equation}\label{two-comp}
 H_{\Gamma_P} = \sum_{j\in J_{good}(N)} T_j  + \sum_{j\in J_{bad}(N)} T_j\,. 
\end{equation}
Lemma \ref{keyL} tells us that $\#J_{\rm good}(N)$ is dominated by a constant only depending on $d, p_1, p_2$ and hence, 
to estimate the first term in  (\ref{two-comp}),  it suffices to control $T_j$ for $j\in J_{\rm good}(N)$ individually. The following theorem achieves this goal.

\begin{theorem}\label{badL}
Let $P(t)$ be a polynomial of degree $d$ with no linear term. Then for $ r>\frac{d-1}{d}$, $p_1>1$, $p_2>1$ with $\frac{1}{p_1}+\frac{1}{p_2} = \frac{1}{r}$, there is a constant $C$ independent of $j$ and the coefficients of $P$, such that
$$\|T_j(f,g)\|_r\leq C\|f\|_{p_1}\|g\|_{p_2}.$$ 
\end{theorem}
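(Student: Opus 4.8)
The plan is to treat a single piece $T_j$ at a fixed scale $j$ and, after rescaling, reduce to a single-scale bilinear operator whose kernel is supported on $|t|\sim 1$; then exploit that after rescaling the curve is essentially a monomial curve (when $j\in J_{\rm good}(N)$ one has no such control, but the statement of Theorem \ref{badL} is for \emph{all} $j$, including $j\in J_{\rm good}(N)$, where we must work directly). First I would perform the dilation $t\mapsto 2^{-j}t$ in the defining integral \eqref{defofTj}, so that $\rho_j$ is replaced by the fixed bump $\rho$ supported on $1/2<|t|<2$, the first argument becomes $f(x-2^{-j}t)$, and the second becomes $g(x-P(2^{-j}t))$ with $P(2^{-j}t)=\sum_k a_k 2^{-jk}t^k=:\widetilde P(t)$, a polynomial of degree $\le d$ with no linear term whose coefficients $\widetilde a_k=a_k 2^{-jk}$ are arbitrary reals. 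A further dilation in $x$ (by $2^{-j}$ in the first variable, absorbed into $f$, and a matching renormalization using that $\|\cdot\|_r$ scales correctly under the two-parameter dilation $f(x)\mapsto f(\lambda x)$, $g(x)\mapsto g(\mu x)$) shows that the desired estimate for $T_j$ is \emph{equivalent}, with the same constant, to the estimate
\[
\Bigl\| \int f(x-t)\, g(x-Q(t))\, \rho(t)\, dt \Bigr\|_r \le C \|f\|_{p_1}\|g\|_{p_2}
\]
for an \emph{arbitrary} polynomial $Q$ of degree $\le d$ with no linear term, with $C=C(d,p_1,p_2)$. So the whole content of Theorem \ref{badL} is this single-scale, coefficient-uniform bilinear bound; the cancellation $\int\rho=0$ will be irrelevant here since the kernel is a fixed compactly supported bump away from the origin.

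Next I would establish the single-scale estimate by the standard $TT^*$/orthogonality-free route for single-scale bilinear forms: interpolation among (a) the trivial $L^\infty\times L^\infty\to L^\infty$ and $L^\infty\times L^1\to L^1$, $L^1\times L^\infty\to L^1$ bounds, which hold with constant $1$ uniformly in $Q$ since $\rho\in L^1$; and (b) an improved estimate that gains an $L^r$ with $r$ below $1$, which is exactly where the degree $d$ enters. The key local-smoothing input is a bound on the multilinear oscillatory integral obtained by freezing one function on the Fourier side: writing $\widehat{T(f,g)}$ and pairing against a third function $h$, one gets a trilinear form with phase $\xi(x-t)+\eta(x-Q(t))$ in the relevant variables, and the decay in the output frequency is governed by the van der Corput / sublevel-set estimate $|\{t\in\operatorname{supp}\rho: |Q'(t)-1|<\lambda\}|\lesssim_d \lambda^{1/(d-1)}$ — valid uniformly in the coefficients of $Q$ because $Q'(t)-1$ is a polynomial of degree $\le d-1$, hence has at most $d-1$ roots counted with multiplicity, so its sublevel sets are controlled purely by the degree. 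This $\lambda^{1/(d-1)}$ exponent is precisely the source of the index $r>\frac{d-1}{d}$.

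The main obstacle, and the step I expect to require the most care, is making the single-scale bilinear estimate genuinely \emph{uniform} in the coefficients of $Q$ while still reaching all the way down to $r$ arbitrarily close to $\frac{d-1}{d}$. One cannot simply invoke a known uniform bilinear Hilbert transform bound, both because the lower-order (in particular quadratic and higher) terms of $Q$ can degenerate in modulus and because the target $r<1$ forces a genuinely different, non-Calderón–Zygmund argument. My plan for this is: decompose $Q'(t)-1$ according to the size of $|Q'(t)-1|\sim 2^{-m}$ on $\operatorname{supp}\rho$, so the $m$-th piece lives on a set $E_m$ of measure $\lesssim_d 2^{-m/(d-1)}$; on each $E_m$ the change of variables $s=Q(t)$ (or $s=x-Q(t)$) is a diffeomorphism with Jacobian comparable to $2^{-m}$, turning $T$ restricted to $E_m$ into an operator morally of convolution type in two independent directions, for which one has the elementary two-parameter bound $\|\int_{E_m} f(x-t)g(x-Q(t))\,dt\|_r \lesssim \|f\|_{p_1}\|g\|_{p_2}\cdot (\text{something like } |E_m|^{1-1/r}2^{m(1/r-1)})$; summing a geometric series in $m$ converges exactly when $r>\frac{d-1}{d}$, and every constant produced depends only on $d,p_1,p_2$. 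Assembling the pieces — rescaling reduction, trivial endpoint bounds, the sublevel-set decomposition with the degree-only measure estimate, and the geometric summation — yields Theorem \ref{badL}; I would then also need to check that the implied constant is genuinely independent of $j$, which is automatic since after the rescaling $j$ has disappeared entirely.
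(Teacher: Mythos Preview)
Your overall strategy matches the paper's: rescale to a fixed-scale operator, decompose according to the size of the Jacobian $Q'(t)-1$, obtain $L^{1}$ and $L^{1/2}$ bounds on each piece via the change of variables $(u,v)=(x-t,x-Q(t))$, interpolate, and sum. The reduction to an arbitrary $Q$ of degree $\le d$ with no linear term via the dilation $x\mapsto 2^{-j}x$ is correct and is implicit in the paper's argument (which keeps the $2^{-j}$ factors explicit but does the same computation).

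There is, however, a genuine gap in your justification of the uniform sublevel-set bound. The claim $|\{t\in\operatorname{supp}\rho:|Q'(t)-1|<\lambda\}|\lesssim_d\lambda^{1/(d-1)}$ is \emph{true}, but not simply because ``$Q'(t)-1$ has degree $\le d-1$ and hence at most $d-1$ roots.'' That reasoning fails: the polynomial $p(t)=\epsilon(t-1)^{d-1}$ has $d-1$ roots yet its sublevel set has measure $\sim(\lambda/\epsilon)^{1/(d-1)}$, which is not $\lesssim\lambda^{1/(d-1)}$ uniformly in $\epsilon$. What rescues the estimate here is the structural constraint $Q'(0)=0$ coming from the absence of a linear term: if the sublevel set is nonempty then $|Q'(t_0)|\sim 1$ for some $t_0\in[1/2,2]$, which forces the dominant rescaled coefficient $b_{k_0}:=\max_{2\le k\le d}|c_k|\,2^{3k}k!$ to satisfy $b_{k_0}\gtrsim_d 1$; van der Corput applied with the $k_0$-th derivative then gives the bound $\lesssim_d(\lambda/|c_{k_0}|k_0!)^{1/(k_0-1)}\lesssim_d\lambda^{1/(d-1)}$. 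This is exactly the mechanism the paper uses (the choice of $k_0$ and inequalities (3.9), (3.14), (3.15)), and it is not optional.

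Two smaller points. First, the paper separates out the regime where $|Q'(t)|$ is not comparable to $1$ (its Cases 1 and 2, decomposing by $|Q'(t)|\sim\alpha$) and obtains the $L^{1/2}$ bound there directly, with no interpolation needed, because the Jacobian $|Q'(t)-1|$ is already $\gtrsim\max(\alpha,1)$; only in Case 3, where $|Q'(t)|\sim 1$ and the Jacobian can be small, does one need the finer dyadic decomposition in $h=|Q'(t)-1|$ and the interpolation. Your proposal conflates these regimes. Second, your sketched piece bound ``$|E_m|^{1-1/r}2^{m(1/r-1)}$'' is off: interpolating the paper's $L^1$ bound $C|E_0(h)|$ and $L^{1/2}$ bound $C|E_0(h)|h^{-1}$ gives $|E_0(h)|\cdot h^{-(1/r-1)}$, i.e.\ exponent $1$ (not $1-1/r$) on the measure factor. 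With $|E_0(h)|\lesssim h^{1/(d-1)}$ this yields $h^{d/(d-1)-1/r}$, summable over dyadic $h\le 1$ precisely when $r>(d-1)/d$.
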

The case when $r\geq 1$ is trivial and a finer analysis is required for $r<1$. 
The tool we use here is the  van der Corput type estimate for obtaining the uniform bound. 
 We postpone the proof of Theorem \ref{badL}
to Section \ref{bad case},  without sidetracking the reader from the main structure of the proof. 
We now turn to  the case $j\in J_{\rm bad}(N)$ for getting an estimation of the second term in \ref{two-comp}.  To fulfill this, we need the following
theorem.
\begin{theorem}\label{goodL} 
Let $P(t)$ be a polynomial of degree $d$ without linear term and $r>\frac{1}{2}$, $p_1, p_2>1$ with $\frac{1}{p_1}+\frac{1}{p_2}=\frac{1}{r}$. If $N$ is sufficiently large, then for each $2\leq l\leq d$, there is a constant $C$  independent of the coefficients of $P(t)$ such that   
$$
\Big\|\sum\limits_{j\in J_l(N)}T_j(f,g)\Big\|_r\leq C\|f\|_{p_1}\|g\|_{p_2}.
$$ 
\end{theorem}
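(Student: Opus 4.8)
The plan is to treat the sum $\sum_{j\in J_l(N)} T_j$ as a perturbation of the single-monomial bilinear operator attached to the curve $(t, a_l t^l)$, and then invoke the known boundedness results for the monomial case (Theorem \ref{seTH}, whose proof machinery applies uniformly to the curve $(t,t^l)$ after a rescaling that absorbs $a_l$). The key point driving the argument is the definition \eqref{good} of $J_l(N)$: when $j\in J_l(N)$ and $|t|\sim 2^{-j}$, we have $|a_l t^l| > 2^{N+2d} |a_k t^k|$ for all $k\neq l$, so on the support of $\rho_j$ the phase $e^{i\xi P(t)}$ differs from $e^{i\xi a_l t^l}$ by a factor whose logarithm is a sum of terms each smaller than $2^{-N-2d}$ times the main term. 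Thus first I would perform the standard Fourier/time-frequency reduction: insert Littlewood--Paley projections $\Delta_k^{(2)} g$ in the second variable according to the size of the frequency $\eta$ relative to the natural scale $(a_l 2^{-jl})^{-1}$, so that each $T_j$ is decomposed into pieces indexed by the "output frequency regime," exactly as in the monomial proof of Theorem \ref{seTH}.

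Second, with that decomposition in hand, I would compare $\sum_{j\in J_l(N)} T_j$ term by term against the corresponding monomial operator $\sum_j T_j^{(l)}$, where $T_j^{(l)}(f,g)(x) = \int f(x-t) g(x - a_l t^l)\rho_j(t)\, dt$. The difference is controlled by a telescoping/Taylor expansion of the phase: writing $P(t) = a_l t^l + R(t)$ with $R(t) = \sum_{k\neq l} a_k t^k$, on $\supp\rho_j$ one has $|R(t)| \lesssim 2^{-N-2d}|a_l t^l|$ together with analogous bounds on derivatives of $R$. This smallness should give, for the difference of the two operators on each frequency block, an extra gain of a geometric factor like $2^{-cN}$ or a factor summable in $j$, allowing one to sum over all $j\in J_l(N)$ even though this index set may be infinite and even though $r<1$ (where one must sum $L^r$ quasi-norms via the $r$-triangle inequality, i.e. $\|\sum_j S_j\|_r^r \le \sum_j \|S_j\|_r^r$). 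The main monomial term $\sum_j T_j^{(l)}$ is then handled directly by Theorem \ref{seTH} applied to $(t,t^l)$, since $r>\tfrac12$ is exactly its hypothesis and $2\le l\le d$; the uniformity in $a_l$ comes from the scaling symmetry $t\mapsto |a_l|^{-1/l} t$ of the monomial curve, which leaves the $L^{p_1}\times L^{p_2}\to L^r$ norm invariant.

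The main obstacle I anticipate is the second step: controlling the perturbation uniformly in $j$ and, crucially, \emph{summably} in $j\in J_l(N)$ when $r<1$. For a single scale the smallness $2^{-N-2d}$ is more than enough, but to sum over the (possibly infinite, unbounded) set $J_l(N)$ one needs the difference operator at scale $j$ to decay geometrically \emph{in $j$} (or in the distance from the relevant frequency block), not merely to be small. This should follow from combining the phase-smallness with a van der Corput / stationary phase estimate in the frequency blocks where the oscillation is active, and with trivial $L^\infty$ bounds in the non-oscillatory blocks — but the bookkeeping of which frequency regimes contribute, and extracting a clean geometric decay in each, is where the real work lies. A secondary technical point is that $J_l(N)$ need not be an interval of integers, so one cannot literally telescope; instead one bounds each $T_j$, $j\in J_l(N)$, against its monomial analogue plus an error, and reassembles $\sum_{j\in J_l(N)} T_j^{(l)}$ as a piece of the \emph{full} monomial sum $\sum_{j\in\mathbb Z}T_j^{(l)}$ (legitimate because the missing terms, being of the same monomial type, only help), thereby reducing to Theorem \ref{seTH} without needing $J_l(N)$ to be connected.
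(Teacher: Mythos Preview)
There are two genuine gaps. First, a circularity: in this paper Theorem~\ref{seTH} is \emph{deduced from} Theorem~\ref{goodL} (see the end of Section~2), not the other way around; the only prior result for the monomial curve is the $L^2\times L^2\to L^1$ estimate from \cite{li2008}, which does not cover $r<1$. So you cannot invoke Theorem~\ref{seTH} as a black box here.

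Second, and more fundamentally, the ``subtract the monomial operator and bound the difference'' strategy fails at high frequencies, which is exactly where all the work lies. Write $P(t)=a_lt^l+R(t)$; on $\supp\rho_j$ one has $|R(t)|\lesssim 2^{-N}|a_l|2^{-jl}$, but after the Littlewood--Paley decomposition in the $g$-variable at frequency $|\eta|\sim 2^{j_l+lj+m}$ the relevant phase perturbation is $2\pi\eta R(t)$, of size $\sim 2^{m-N}$. For $m\gg N$ this is enormous, so $T_j$ and $T_j^{(l)}$ are \emph{not} close on those frequency blocks --- each has oscillatory cancellation, but governed by different critical points, and their difference is no smaller than either one. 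Your proposed geometric decay in $j$ cannot come from the $2^{-N}$ smallness alone; it would have to come from a full stationary-phase/van der Corput analysis of the difference, which is at least as hard as analysing $T_j$ itself. (There is also a secondary issue: for the signed Hilbert kernel one cannot dominate the partial sum $\sum_{j\in J_l(N)}T_j^{(l)}$ by the full sum $\sum_{j\in\mathbb Z}T_j^{(l)}$, since the omitted terms may cancel against the retained ones.) The paper's route is instead to \emph{carry the perturbation $Q_l$ through the entire monomial argument}: after the bilinear frequency decomposition \eqref{multiplier}, the diagonal pieces $m\sim n>0$ are handled by proving a decaying $L^2\times L^2\to L^1$ bound (Proposition~\ref{L1}, via $\sigma$-uniformity and $TT^*$) and a slowly-growing $L^{p_1}\times L^{p_2}\to L^{r,\infty}$ bound (Proposition~\ref{below1}, via trees/sizes/BMO), then interpolating and summing in $m$. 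The smallness $\|Q_l\|_{D_K}\le 2^{-99N/100}$ enters only to guarantee that the perturbed phase $\phi_l$ and its inverse functions stay $C^K$-close to the monomial ones (Lemma~\ref{lemmakey} in the Appendix), so that the Hessian lower bounds (Lemmas~\ref{perturbation1}, \ref{derivative00}) survive --- it is \emph{not} used to bound an operator difference.
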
   

This theorem requires a delicate time-frequency analysis. Sections \ref{decomposition3}, \ref{section5}, \ref{rnot1}
will be devoted to the proof of Theorem \ref{goodL}. 
The idea in the proof can be outlined as follows. Observe that if $j\in J_l(N)$, then for $2^{-j-1}<|t|< 2^{-j+1}$,  $|P(t)-a_lt^l| \leq 2^{-N}|a_lt^l|$. Because we can choose $N$ sufficiently large, the polynomial $P$ behaves like the monomial $a_lt^l$, except for a tiny perturbation $|P(t)-a_lt^l|$. Although technically some treatment is required for taking care of the perturbation, the polynomial  $P$ can be 
viewed as a monomial locally. The main ingredient for handling the perturbation is a theorem on inverse functions, which asserts
that a small perturbation of a sufficiently nice function can not produce a significant change for its inverse function.
The details of it will appear in Appendix.    
We adapt the uniformity method in \cite{li2008} to obtain a uniform bound from $L^2\times L^2$ to $L^1$ with a decay factor, 
after removing paraproducts (see Proposition \ref{L1}). 
To make $r$ go below 1, the method of time frequency analysis has to be invoked (see Proposition \ref{below1}).  
For $r<1$, the main issue here is to obtain an appropriate upper bound that grows slowly enough, in contrast to  the 
decay factor mentioned above.  This is achieved by a trick related to Whitney decomposition in Section \ref{P_error}
and the time-frequency analysis. Finally, Theorem \ref{goodL} follows from interpolation. 
Combining Theorem \ref{badL} and Theorem \ref{goodL} we then obtain Theorem \ref{mainTH}. \\


Theorem \ref{seTH} is  a direct consequence of Theorem \ref{goodL}. 
In fact,  when $P(t) = t^d$ every integer $j$ is either in $J_{\rm bad}(N)$
or in the interval $(-N, N)$.  For $j\in J_{\rm bad}(N)$, Theorem {\ref{goodL}} implies the desired result. 
For $|j|<N$, the theorem follows from the single scale estimate,  presented in Section \ref{bad case}. 
\\

The reader may also notice that Theorem \ref{badL} is the only constraint preventing $r$ go below $\frac{d-1}{d}$. If not pursuing the uniform boundedness,  we are able to obtain Theorem \ref{P&r} by modifying the proof of Theorem \ref{badL} and combining the result of Theorem \ref{goodL}. Details can be found in section \ref{bad case}.

In what follows, we use $C$ to denote a constant independent of the coefficients of $P$. The exact value of $C$ may vary from line to line, but it is unimportant.


\section{Treatments for the case of $J_{\rm good}(N)$ }\label{bad case}
\setcounter{equation}0

The first part of this section is devoted to prove Theorem \ref{badL}. 
In the end, we will give a counterexample to show that $r\geq\frac{d-1}{d}$ is necessary,  
and provide a proof of Theorem \ref{P&r}. The main tool in proving Theorem \ref{badL} is the van der Corput type estimate for the level set of a polynomial (or a function). The following lemma is well-known and can be found, for example, in \cite{st} and \cite{ca}.
\begin{lemma}\label{van}
For each $k\geq 1$, there exists an absolute constant $C_k$ such that for any function $u(t)$ satisfying $u^{(k)}(t)\geq 1$ for all $t$, then
$$
|\{t:|u(t)|\leq \alpha\} |\leq C_k \alpha^{1/k}.
$$ 
\end{lemma}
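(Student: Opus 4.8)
The plan is to prove Lemma \ref{van} by reducing to the case $k=1$ via iterated subdivision of the real line according to the sign behavior of successive derivatives. First I would dispose of the base case $k=1$: if $u'(t) \geq 1$ everywhere, then $u$ is strictly increasing with derivative bounded below, so the preimage $\{t : |u(t)| \leq \alpha\}$ is an interval on which $u$ moves from $-\alpha$ to $\alpha$ (or a subinterval thereof), hence has length at most $2\alpha$; take $C_1 = 2$.

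For the inductive step, assume the statement holds for $k-1$ with constant $C_{k-1}$, and suppose $u^{(k)}(t) \geq 1$ for all $t$. Then $v := u^{(k-1)}$ satisfies $v'(t) \geq 1$, so $v$ has at most one zero, say at $t_0$ (if none, the argument is easier). Split $\ZR$ into the two rays where $v$ keeps a fixed sign and is monotone away from $0$. On the ray where, say, $v(t) \geq 0$ and $v$ is increasing, I would further subdivide: for a threshold $\de > 0$ to be chosen, on the portion where $|v(t)| = |u^{(k-1)}(t)| \geq \de$ the inductive hypothesis (applied to $\pm u/\de$, whose $(k-1)$-st derivative has absolute value $\geq 1$) gives that the level set has measure at most $C_{k-1}(\alpha/\de)^{1/(k-1)}$; on the complementary portion where $|u^{(k-1)}| < \de$, since $u^{(k-1)}$ is monotone with derivative $\geq 1$ this set is an interval of length at most $2\de$. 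Doing this on each of the (at most two) rays and adding up yields
$$
|\{t : |u(t)| \leq \alpha\}| \leq 2\Big( 2\de + C_{k-1}(\alpha/\de)^{1/(k-1)}\Big).
$$
Optimizing in $\de$ — balancing the two terms by choosing $\de \sim \alpha^{1/k}$ — produces a bound $C_k \alpha^{1/k}$ with $C_k$ depending only on $C_{k-1}$ and $k$, completing the induction.

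The main obstacle, or rather the point requiring care, is the bookkeeping of the subdivision: one must track that at each stage the relevant derivative is monotone (so that each "small derivative" set is genuinely a single interval and each "large derivative" set is one to which the inductive hypothesis applies cleanly), and that the number of pieces stays bounded by an absolute constant independent of $\alpha$ and $u$, so that the final constant $C_k$ is indeed absolute. A clean way to organize this is to note $u^{(k-1)}$ has at most one zero, $u^{(k-2)}$ is then piecewise monotone with at most two monotonicity intervals, and in general one never accumulates more than $O(2^k)$ pieces; since $k$ is fixed this is harmless. I would also remark that the hypothesis $u^{(k)} \geq 1$ may be weakened to $|u^{(k)}| \geq 1$ with the same proof, which is the form actually convenient for applications to $u(t) = P'(t) - 1$.
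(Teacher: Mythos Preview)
Your argument is correct and is the standard induction proof of this sublevel set estimate; the paper does not supply its own proof but simply cites \cite{st} and \cite{ca}, where essentially the argument you outline appears. One small remark: in the inductive step you should state the lemma on an arbitrary interval rather than on all of $\ZR$, so that the hypothesis $|u^{(k-1)}|\geq\de$ holds on the \emph{entire} domain to which you apply the inductive hypothesis (each of the two rays); with that understood the bookkeeping is exactly as you describe, and the constant $C_k$ stays absolute.
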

\subsection{Proof of Theorem \ref{badL}}\label{proof_badL}
First, by insertting the absolute value, we have
\begin{align*}
\|T_j(f,g)(x)\|_1&\leq \int \int |f(x-t)||g(x-P(t))|dx\, |\rho_j(t)|dt
\\
&\leq \|f\|_p\|g\|_{p'} \|\rho\|_1, 
\end{align*}
which is bounded by $C\|f\|_p\|g\|_{p'}$, for all $1\leq p\leq \infty$ and $p'$ is the conjugate of $p$. Interpolating with the trivial 
bound $L^{\infty}\times L^{\infty}\to L^{\infty}$, we obtain  Theorem \ref{badL} immediately  for $r\geq 1$.  
We now turn to the difficult case when  $r<1$. Observe that  
$$
T_j(f,g)(x) = \int f(x-2^{-j}t)g(x-P(2^{-j}t))\rho(t) dt.
$$
 It is then natural to compare the measures of the following two sets
\begin{align}\label{A_10}
A_1= \{2^{-j}t: t\in \mbox{supp} \, \rho(t)\}
\end{align}
and
\begin{align}\label{A_P0}
A_P= \{P(2^{-j}t): t\in \mbox{supp} \, \rho(t)\}.
\end{align}
Let 
\begin{align*}
Q_j(t) = (P(2^{-j}t))'-(2^{-j}t)' = da_d(2^{-j})^dt^{d-1}+\cdots +2a_2 (2^{-j})^2t-2^{-j}
\end{align*}
and 
$$
E_\alpha = \left\{ t\in \mbox{supp}\, \rho: \alpha \leq |(P(2^{-j}t))'| \leq 2\alpha  \right\},
$$
where $\alpha$ is a dyadic number. Define
\begin{align}\label{A_1}
A_1(\alpha)= \{2^{-j}t: t\in E_\alpha\}
\end{align}
and
\begin{align}\label{A_P}
A_P(\alpha) = \{P(2^{-j}t): t\in E_\alpha\}.
\end{align}
$T_j$ can then be decomposed into $T_j(f,g)(x) = \sum_{\alpha} T_j^\alpha (f,g)(x)$, where 
\begin{equation}\label{defofTj-al}
T_j^\alpha (f,g)(x) = \int_{E_\alpha} f(x-2^{-j}t)g(x- P(2^{-j}t)) \rho(t)dt.
\end{equation}

We need to establish the $L^r$ estimate for the sum $\sum_\alpha T_j^\alpha$. 
To achieve it, divide the range of dyadic number $\alpha$ into 3 cases:
\\
$\bullet$ {Case 1}. \q$\alpha \geq 2\cdot 2^{-j}$;
\\
$\bullet$ {Case 2}. \q$\alpha \leq \frac{1}{4}\cdot2^{-j}$;
\\
$\bullet$ {Case 3}.  $ \q\frac{1}{2}\cdot 2^{-j}\leq \alpha \leq  2^{-j}$. \\
Case 1 and Case 2 are similar, while Case 3 is the hardest. We begin with Case 1. Define constants $b_k$'s for $2\leq k\leq d$ as follows:
\begin{align}\label{bk}
b_k = |a_k| \cdot (2^{-j})^k \cdot2^{3k} \cdot k! 
\end{align}
and assume
$$
b_{k_0} = \max_{2\leq k\leq d}\{b_k\}. 
$$
If $\alpha \geq 2\cdot 2^{-j}$, then 
\begin{align}
2\alpha |E_\alpha| \geq |A_P(\alpha)|\geq \alpha |E_\alpha|>2^{-j}|E_\alpha| =  |A_1(\alpha)|.
\end{align}
Hence, in estimating the $L^r$ norm of $T^\alpha_j(f,g)(x)$, 
we are able to assume $x$ is supported on an interval $I_\alpha$ of length $2\alpha |E_\alpha|$. Applying change of variables $u = x-2^{-j}t$, $v = x-P(2^{-j}t)$, we obtain 
\begin{align*}
&\q\int_{I_\alpha}\left|\int_{E_\alpha} f(x-2^{-j}t)g(x-P(2^{-j}t))\rho(t)dt\right|^{1/2}dx
\\
&\leq C|I_{\alpha}|^{1/2} \left(\int_{I_\alpha}\int_{E_\alpha}| f(x-2^{-j}t)g(x-P(2^{-j}t))|dtdx\right)^{1/2}
\\
& \leq C|I_{\alpha}|^{1/2} \left(\int \int |f| |g | \bigg|\frac{\partial (x,t)}{\partial(u,v)}\bigg|\Id_{E_\alpha}(t)dudv \right)^{1/2}
\\
&
\leq C|I_{\alpha}|^{1/2} (\frac{1}{2}\alpha)^{-1/2}\left( \|f\|_1\|g\|_1 \right)^{1/2} 
\,\, \leq C|E_{\alpha}|^{1/2} ( \|f\|_1\|g\|_1 )^{1/2} 
\end{align*}
For the last two inequalities, we used the facts
\begin{align*}
\bigg|\frac{\partial(u,v)}{\partial(x,t)} \bigg| = |(P(2^{-j}t))' - (2^{-j}t)'| \geq \frac{1}{2}\alpha\q\q \mbox{for all} \,\, \,t\in E_\alpha,
\end{align*}
and 
\begin{align*}
|I_\alpha| \sim \alpha |E_\alpha|. 
\end{align*}
Henceforth, when $\alpha\geq 2\cdot 2^{-j}$ we have
\begin{equation} 
\|T_j^\alpha(f,g)\|_{1/2}^{1/2} \leq C|E_\alpha|^{1/2}(\|f\|_1\|g\|_1)^{1/2}.
\end{equation}

Now we consider the second case: $\alpha \leq \frac{1}{4}2^{-j}$. In this case, observe that 
$$
2\cdot 2^{-j}|E_\alpha| \geq |A_1(\alpha)| >|A_P(\alpha)|,
$$
hence when considering $\|T_j^\alpha(f,g)\|_r^{r}$ we may restrict $x$ on an interval $I_\alpha$ of length $2\cdot2^{-j}|E_\alpha|$. Again, we 
perform change of variables $u= x-2^{-j}t$, $v= x-P(2^{-j}t)$ and notice that $|\frac{\partial(u,v)}{\partial(x,t)}| \geq \frac{1}{2}2^{-j}$, 
as we did in the case 1, we obtain
\begin{equation}
\int_{I_\alpha}\left|\int_{E_\alpha}f(x-2^{-j}t)g(x-P(2^{-j}t))\rho(t)dt\right|^{1/2}dx
\leq C|E_\alpha|^{1/2}(\|f\|_1\|g\|_1)^{1/2}.
\end{equation}

The next step is to estimate the level set $E_\alpha$. First notice that
\begin{align}\label{levelset}
|(P(2^{-j}t))^{(k_0)}| \geq \frac{1}{2} |a_{k_0} (2^{-j})^{k_0}k_0!|,
\end{align}
which follows from the definition of $b_k$ and $b_{k_0}$ in (\ref{bk}). Then by Lemma \ref{van}, 
\begin{align*}
|E_\alpha| \leq |\{   |(P(2^{-j}t))'|  \leq 2\alpha\}| \leq C_d \left(\frac{2\alpha}{\frac{1}{2}|a_{k_0} (2^{-j})^{k_0}k_0!|} \right)^{\frac{1}{k_0-1}}.
\end{align*}
In order to prove that $\sum_\alpha |E_\alpha|^{1/2}$ is bounded above uniformly, we just need to show $\frac{2\alpha}{\frac{1}{2}|a_{k_0} (2^{-j})^{k_0}k_0!|}$ bounded above uniformly. Indeed, by the definition of $b_{k_0}$ we have
\begin{align*}
\alpha &\leq \sup\limits_{t\in supp \, \rho}|(P(2^{-j}t))'|
\\
&\leq |da_d(2^{-j})^dt^{d-1})|+\cdots +|2a_2(2^{-j})^2t|
\leq d|b_{k_0}|
= d \cdot k_0!\cdot2^{3k_0}\cdot |a_{k_0}(2^{-j})^{k_0}|.
\end{align*}
This implies $\frac{\alpha}{|a_{k_0}(2^{-j})^{k_0}k_0!|} \leq d\cdot 2^{3k_0}$ and hence $\sum_\alpha|E_\alpha|^{1/2} \leq C_d$. Summing up all $\alpha$ in Case 1 and Case 2, we obtain
\begin{align}\label{goal12}
\Big\|\sum\limits_{\alpha\leq \frac{1}{4}2^{-j}\,\mbox {or}\,\alpha \geq 2\cdot 2^{-j}}T_j^\alpha(f,g)\Big\|_{1/2} \leq C_d\|f\|_1\|g\|_1
\end{align}
and
\begin{align}\label{goal123}
\Big\|\sum\limits_{\alpha\leq \frac{1}{4}2^{-j}\,\mbox {or}\,\alpha \geq 2\cdot 2^{-j}}T_j^\alpha(f,g)\Big\|_{r} \leq C_d\|f\|_{p_1}\|g\|_{p_2}
\end{align}
for $\frac{1}{2}\leq r\leq 1$ by interpolation.
We turn to the last case, where we need to consider 
$ \{t\in \mbox{supp} \,\rho: \frac{1}{2}\cdot 2^{-j}< |(P(2^{-j}t))'| < 2\cdot 2^{-j} \}$ in a finer scale. We only need to focus on the case $(P(2^{-j}t))'>0$, because the case when $(P(2^{-j}t))'<0$ can be handled exactly  same as Case 2. Let

$$
E_0 =\Big\{t\in \mbox{supp} \,\rho: \frac{1}{2}\cdot 2^{-j}< (P(2^{-j}t))' < 2\cdot 2^{-j} \Big\}
$$ 
and
$$
E_0(h) =\Big\{t\in E_0 \,: h\cdot 2^{-j}\leq  |(P(2^{-j}t))' - (2^{-j}t)'|\leq 2h\cdot 2^{-j} \Big\},
$$ 
where $0<h\leq 1$ is a dyadic number and $E_0 =\bigcup_{0<h\leq 1} E_0(h)$ . Let
$$
T_{j,h}(f,g)(x) = \int_{E_0(h)} f(x-2^{-j}t)g(x-P(2^{-j}t))\rho(t)dt.
$$
Our goal is to prove that there is a positive number $\epsilon>0$ such that the following inequality is true for $0<h\leq 1$, $\frac{d-1}{d}<r<1$, $p_1$, $p_2>1$ with $\frac{1}{p_1}+\frac{1}{p_2}= \frac{1}{r}$
\begin{align}\label{hgoal}
\|T_{j,h}(f,g)\|_r \leq C h^\epsilon \|f\|_{p_1}\|g\|_{p_2}.
\end{align}
We begin with the estimation of $E_0(h)$. Notice that $k_0\geq 2$, the following inequality is exactly (\ref{levelset}) 
$$
\Big|\Big((P(2^{-j}t)-2^{-j}t)'\Big)^{(k_0-1)}\Big| \geq \frac{1}{2}\cdot |a_{k_0}|(2^{-j})^{k_0}\cdot k_0!.
$$ 
In addition, for $t\in E_0$, one has
\begin{align*}
\frac{1}{2}\cdot 2^{-j} \leq \sup\limits_{t\in E_0}|(P(2^{-j}t))'|\leq d\cdot b_{k_0}.
\end{align*}
Hence
\begin{align}\label{op}
\frac{1}{2d}\cdot 2^{-j} \leq b_{k_0} = k_0!\cdot2^{3k_0}\cdot |a_{k_0}(2^{-j})^{k_0}|.
\end{align}
Applying Lemma \ref{van} and (\ref{op}), we obtain
\begin{align}\label{levelseth}
|E_0(h)| \leq \left|\frac{2h\cdot 2^{-j}}{\frac{1}{2}|a_{k_0}(2^{-j})^{k_0}k_0!|}\right|^{\frac{1}{k_0-1}}\leq C_d h^\frac{1}{k_0-1}.
\end{align}
Because $|\{P(2^{-j}t): t\in E_0(h)\}| \sim |\{2^{-j}t: t\in E_0(h)\}| \leq 10 \cdot2^{-j}|E_0(h)|$,  in proving (\ref{hgoal}) 
we can assume $x$ is supported on an interval $I_h$ of length $10\cdot 2^{-j}|E_0(h)|$. Thus
\begin{align*}
&\int_{I_h} \int_{E_0(h)}\left|f(x-2^{-j}t)g(x-P(2^{-j}t))\right|dtdx
\\
\leq & \int_{I_h} \left(\int_{E_0(h)}|f(x-2^{-j}t)|^{p}dt\right)^{1/p}\left(\int_{E_0(h)}|g(x-P(2^{-j}t))|^{p'}dt\right)^{1/p'}dx
\\
\leq & \int_{I_h} (2^j)^{1/p}\|f\|_p(2^{j})^{1/p'}\|g\|^{p'}dx \leq 10\cdot |E_0(h)|\|f\|_p\|g\|_{p'},
\end{align*}
that is 
\begin{align}\label{goal1}
\|T_{j,h}(f,g)\|_1 \leq C |E_0(h)|\|f\|_p\|g\|_{p'}.
\end{align}
On the other hand, applying Cauchy-Schwarz inequality, we have  
\begin{align*}
  & \q \left(\int_{I_h}\left(\int_{E_0(h)} |f(x-2^{-j}t)g(x-P(2^{-j}t))|dt\right)^{1/2}dx\right)^2  
\\
&\leq |I_h| \int_{I_h}\int_{E_0(h)} \left|f(x-2^{-j})g(x-P(2^{-j}t))\right|dtdx
\\
&\leq|I_h| \int \int |f(u)||g(v)|\left|\frac{\partial(x,t)}{\partial(u,v)}\right|\Id_{E_0(h)}(t)dudv
\\
& \leq C |E_0(h)|h^{-1}\|f\|_1\|g\|_1.
\end{align*}
Here we applied change of variables $u = x- 2^{-j}t$, $v = x - P(2^{-j}t)$ and the fact that for $t\in E_0(h)$
$$
\Big|\frac{\partial(x,t)}{\partial(u,v)}\Big| \leq C 2^j\cdot h^{-1}.
$$
Thus we obtain 
\begin{align}\label{goal2}
\|T_{j,h}(f,g)\|_{1/2}\leq C |E_0(h)|h^{-1}\| f \|_1\| g \|_1.
\end{align}
Combining the results of (\ref{levelseth}), (\ref{goal1}), (\ref{goal2}) and applying interpolation, we obtain for $r>\frac{k_0-1}{k_0}$
\begin{align*}
\|T_{j,h}(f,g)\|_{r} \leq C h^{\epsilon}\|f\|_{p_1}\|g\|_{p_2},
\end{align*}
for some $\epsilon>0$ and thus we have 
\begin{align}\label{goal0}
\|\sum\limits_{0<h\leq 1}T_{j,h}(f,g)\|_{r} \leq C\|f\|_{p_1}\|g\|_{p_2}.
\end{align}
(\ref{goal123}) and (\ref{goal0}) together complete the proof of the inequality
$$
\|T_j(f,g)\|_{r} \leq C\|f\|_{p_1}\|g\|_{p_2} \q \mbox{for} \q \frac{d-1}{d}<r \leq 1.
$$
This finishes the proof of Theorem \ref{badL}.

\subsection{A counterexample}\label{counter}
We show that $r\geq \frac{d-1}{d}$ is necessary for the boundedness of $H_{\Gamma_P}$, 
even for a single scale. \\
Let $P(t) = t+(\frac{1-t}{A})^{d}-\frac{1}{A^d}$ and take $ A = d^{1/d}$ to eliminate the linear term. 
Let $f =\Id_{[0,\delta]} $ and $g=\Id_{[\frac{1}{A^d},\frac{1}{A^d}+\delta]}$ be characteristic functions  and $\delta$ is a small positive number.
 Set $B =\frac{A}{10}$ and consider $T_0(f,g)(x) =\int f(x-t)g(x-P(t))\rho(t)dt$, for $x\in[1+B\delta^{1/d},1+2B\delta^{1/d}]$. If $t<0$, $(x-t)\geq 1>\delta$ and hence $f(x-t)g(x-P(t))\rho(t)=0$. Thus we only need to focus on $t>0$ and $\rho(t) >0$. In this case, $f(x-t)g(x-P(t))\rho(t)\geq 0$, for all $x\in[1+B\delta^{1/d},1+2B\delta^{1/d}]$.  Now for fixed $x$, when $t\in[x-\delta/2,x-\delta/4]$ we have
\begin{align*} 
\frac{1}{A^d}<x-P(t)
<\delta+\frac{1}{A^d}.
\end{align*}
That is, for $x\in[1+B\delta^{1/d},1+2B\delta^{1/d}]$ and $t\in[x-\delta/2,x-\delta/4]$ we have
$$
f(x-t)g(x-P(t))= 1
$$
and 
$$
f(x-t)g(x-P(t))\rho(t)\geq 1/2.
$$
Hence, for every $x\in[1+B\delta^{1/d},1+2B\delta^{1/d}]$ we have
$$
T_0(f,g)(x) \geq 1/2 \cdot \delta/4 = \delta/8.
$$
This implies
$$
\|T_0(f,g)(x)\|_r \geq (B\delta^{1/d})^{1/r} \delta/8.
$$
Observe that $\|f\|_{p_1} = \delta^{1/p_1} $ and $\|g\|_{p_2} = \delta^{1/p_2}$. Then
$\|T_0(f,g)\|_r \leq C\|f\|_{p_1}\|g\|_{p_2}$ yields
$$
(B\delta^{1/d})^{1/r} \delta/8 \leq C\delta^{1/p_1+1/p_2} = C\delta^{1/r}.
$$
Let $\delta\to 0^+$, then we obtain $r\geq \frac{d-1}{d}$ as desired.

\subsection{Proof of Theorem \ref{P&r}}
Since $P(t)$ is a polynomial of bounded degree, the equivalence between (i) and (ii) in Theorem \ref{P&r} is obvious. To show (ii) implies (iii), we proceed as in subsection \ref{proof_badL} and use the same notation. The arguments for Case 1: $\alpha>2\cdot 2^{-j}$ and Case 2: $\alpha<\frac{1}{4} \cdot 2^{-j}$ are exactly the same and hence we still have (\ref{goal12}) and ({\ref{goal123}). For case 3, we also have (\ref{goal1}) and (\ref{goal2}) true. The only difference is that the level set estimate (\ref{levelset}) is replaced by (ii) in Theorem \ref{P&r}
\begin{align}\label{new_level}
|E_0(h)| \leq Ch^{\frac{1}{k-1}}.
\end{align}
By interpolation, we have for $r>\frac{k-1}{k}$
\begin{align}\label{k} 
\|T_{j,h}(f,g)\|_{r} \leq C h^{\epsilon}|\|f\|_{p_1}\|g\|_{p_2},
\end{align}
and thus we get the same result as in Theorem \ref{badL} for $r>\frac{k-1}{k}$. Combining Theorem \ref{goodL} we obtain Theorem \ref{P&r} (iii). 
\\
Now we turn to prove (iii) implies (i). The proof is similar to what we did in Subsection \ref{counter}. 
Assume $t_0$ is a root of $P'(t)-1=0$ of order $k_0$. Since $P(t)$ has no linear term (and no constant term), then $t_0\neq 0$. Let $0<\delta<<|t_0|$ be a small number and define $f = \Id_{[-\delta,\delta]}$ and $g=\Id_{[t_0-P(t_0)-\delta, t_0-P(t_0)+\delta]}$. Fix $x\in[t_0-\delta^\frac{1}{k_0+1}/A, t_0+\delta^\frac{1}{k_0+1}/A]$, where $A$ is a large constant depending on $P$ and to be specific later. Restrict $t$ to $|t-x|<\delta/100$. Notice that $t_0$ is a root of $\big((t-P(t))-(t_0-P(t_0))\big)$ of order $(k_0+1)$. Then when $A$ large enough, we have 
$$
|(t-P(t))-(t_0-P(t_0)|\leq \delta/100.
$$
Thus, we obtain 
$$
|(x-P(t))- (t_0-P(t_0))| = |(x-t) +\big((t-P(t))-(t_0-P(t_0))\big)| <\delta/10
$$
and  
$$
f(x-t)g(x-P(t))=1
$$
for $x\in[t_0-\delta^\frac{1}{k_0+1}/A, t_0+\delta^\frac{1}{k_0+1}/A]$ and $|t-x|<\delta/100$. This yields
$$
\|H_{\Gamma_P}(f,g)\| \geq C \delta^{\frac{1}{r(k_0+1)}}\delta
$$
Combining $\|f\|_{p_1}\|g\|_{p_2} = C \delta^{\frac{1}{r}}$ and Theorem (iii), we obtain $k_0\leq k-1$. 

\section{ Decomposition of the operator $T_j$}\label{decomposition3}
\setcounter{equation}0

The purpose of the section is to decompose the operator $T_j(f,g)$ into two parts in terms of pseudo-differential operators, as in \cite{li2008}. 
One part can be handled by the uniform estimates of some paraproducts (see \cite{li2008U}), while the other part requires most work.
Set $j_l =\frac{b_l}{l-1}$ where $|a_l| = 2^{b_l}$.  The technical issue here is that  $j_l$ may not be an integer. 
To overcome this difficulty, we simply shift $J_l(N)$ by $j_l$ units as follows so that $j_l$ can be essentially treated as an integer in the proof. 
More precisely,  first, by the proof of Lemma \ref{keyL}, $J_l(N)$ is ``continuous" in the sense that $j \in J_l(N)$
 whenever $j$ satisfies $\inf J_l(N) \leq  j \leq \sup J_l(N)$. Thus we set
$$
J^*_l(N) = \{j\in\mathbb Z : \inf J_l(N) \leq j+ j_l \leq \sup J_l(N) \}
$$
and
$$
{\mathcal E} = \Big\{\bigcup_{j\in J_l(N)} [2^{-j-1},2^{-j+1}] \Big\} \backslash \left\{\bigcup_{j\in J_l^*(N)}[2^{-(j+j_l)-1}, 2^{-(j+j_l)+1}] \right\}.
$$
The set $\mathcal E$ is empty whenever $j_l$ is an integer, and can be covered by finitely many dyadic intervals. The number of such intervals 
is dominated by a uniform constant. Henceforth,    
 the case $|t|\in \mathcal E $ can be incorporated into the case of $J_{\rm good}(N)$. 
Therefore, we only need to focus on $|t|\in\cup_{j\in J_l^*(N)}[2^{-(j+j_l)-1}, 2^{-(j+j_l)+1}]$, 
and decompose the operator $T_{j+j_l}$ for $j\in J_l^*(N)$. Here $j_l$ can be viewed as an integer due to the previous shifting argument.

Recalling the definition of $T_j$ as in (\ref{defofTj}) and applying Fourier Inversion formula to both $f(x)$ and $g(x)$, we get 
 $$
 T_{j+j_l}(f,g)(x) =  \int \int \hat f(\xi) \hat g(\eta) e^{2\pi i (\xi+\eta)x} 
\Big( \int e^{-2\pi i \big(t\xi +(a_lt^l+P_l(t))\eta\big)}\rho_{j+j_l}(t)dt \Big)
d\xi d\eta
 $$
 Here $P_l(t)=P(t)-a_l t^l$. 
The next step is to decompose the support of $\xi$ and $\eta$ dyadically. Let $\Theta$ be a Schwartz function supported on $(-1,1)$ such that $\Theta(\xi)=1 $ if $|\xi|\leq \frac{1}{2}$. Let $\Phi(\xi)$ be a Schwartz function such that 
$$\hat\Phi(\xi) = \Theta(\frac{\xi}{2})-\Theta(\xi).$$
Then $\Phi$ is Schwartz function such that $\hat\Phi$ is supported on $\{\xi: \frac{1}{2}<|\xi|< 2\}$ and 
$$
\sum\limits_{m\in\mathbb{Z}}\hat\Phi(\frac{\xi}{2^m}) = 1, \q\mbox{for } \ \xi\neq 0.
$$
Inserting this decomposition into $T_{j+j_l}$, we obtain
$$
 T_{j+j_l}(f,g)(x) =  \int \int \hat f(\xi) \hat g(\eta) e^{2\pi i (\xi+\eta)x} 
\sum\limits_{m\in\mathbb{Z}}\sum\limits_{n\in\mathbb{Z}}\mathcal{M}_{m,n}(\xi,\eta)
d\xi d\eta
 $$
where
$$
\mathcal{M}_{m,n}(\xi,\eta)=\hat\Phi(\frac{\xi}{2^{j_l+j+m}})\hat\Phi(\frac{\eta}{2^{j_l+lj+n}})\left( \int e^{-2\pi i \big(t\xi +(a_lt^l+P_l(t))\eta\big)}\rho_{j+j_l}(t)dt \right).
$$
Change variable $t\mapsto 2^{-j_l-j}t$ and let $Q_l(t)= 2^{j_l+lj}P_l(2^{-j_l-j}t)$. Notice that $|a_l|= 2^{(l-1)j_l} $, we have

\begin{align}\label{multiplier}
\mathcal{M}_{m,n}(\xi,\eta)=\hat\Phi(\frac{\xi}{2^{j_l+j+m}})\hat\Phi(\frac{\eta}{2^{j_l+lj+n}})\bigg( \int e^{-2\pi i \big(\frac{t\xi}{2^{j_l+j}} +\frac{(t^l+Q_l(t))\eta}{2^{j_l+lj}}\big)}\rho(t)dt \bigg).
\end{align}
Here $t^l+Q_l(t)$ is a small perturbation of $t^l$. 
The most difficult case is when $m\sim n>0$. For other cases, the standard Coifman-Meyer treatment (see \cite{c-m1}, \cite{gr}, \cite{ks}, \cite{c-m2}) 
allows us to reduce them to paraproducts.  In \cite {li2008U},  the uniform estimates of the paraproducts were established.
So we only present the details for the case $m\sim n>0$.  
For fixed $m$, the number of $n$ with $m\sim n$ is finite and hence without loss of generality we may assume that $m=n$.
Let 
$$
\mathcal{M}_{m}(\xi,\eta)=\hat\Phi(\frac{\xi}{2^{j_l+j+m}})\hat\Phi(\frac{\eta}{2^{j_l+lj+m}})\bigg( \int e^{-2\pi i (\frac{t\xi}{2^{j_l+lj}} +\frac{(t^l+Q_l(t))\eta}{2^{j_l+lj}})}\rho(t)dt \bigg)
$$
 and 
 \begin{align}\label{Tjm}
 T_{j,m}(f,g)(x) =  \int \int \hat f(\xi) \hat g(\eta) e^{2\pi i (\xi+\eta)x} 
\mathcal{M}_{m}(\xi,\eta)d\xi d\eta.
\end{align}
Our goal is to prove the following two propositions.
\begin{proposition}\label{L1}
For each $2\leq l\leq d$, there are constants $\epsilon>0$ and $C$ depending only on $d$ such that 
$$
\bigg\|\sum\limits_{j\in J_l^*(N)}T_{j,m}(f,g)\bigg\|_1\leq C2^{-\epsilon m}\|f\|_2\|g\|_2.
$$
\end{proposition}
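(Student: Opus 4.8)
\textbf{Proof proposal for Proposition \ref{L1}.}

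The plan is to exploit the oscillation of the $t$-integral in $\mathcal M_m(\xi,\eta)$, which is a one-dimensional oscillatory integral with phase $\lambda(t) = \frac{t\xi}{2^{j_l+lj}} + \frac{(t^l+Q_l(t))\eta}{2^{j_l+lj}}$ against the fixed bump $\rho$. On the support of $\mathcal M_m$ we have $|\xi|\sim 2^{j_l+j+m}$ and $|\eta|\sim 2^{j_l+lj+m}$, so after the rescaling the effective frequency parameter in front of both $t$ and $t^l+Q_l(t)$ is of size $2^m$. Since $t^l+Q_l(t)$ is a small ($2^{-N}$) perturbation of $t^l$ with $l\ge 2$, the phase genuinely depends nonlinearly on $t$ on $\supp\rho$, so integration by parts / van der Corput (in the spirit of Lemma \ref{van}) gives a gain: $|\mathcal M_m(\xi,\eta)| \lesssim 2^{-\delta m}$ for some $\delta = \delta(d) > 0$, together with the analogous bounds on $\xi,\eta$-derivatives of $\mathcal M_m$ needed to treat it as a (decaying) bilinear multiplier. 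This is the standard ``single-scale bilinear estimate with decay'' and by itself gives $\|T_{j,m}(f,g)\|_1 \lesssim 2^{-\delta m}\|f\|_2\|g\|_2$ for each individual $j$; the content of the Proposition is that the \emph{sum} over $j\in J_l^*(N)$ is still controlled by $2^{-\epsilon m}$.

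To sum in $j$, I would follow the uniformity method of \cite{li2008}: after removing the paraproduct-type pieces (the cases $m\not\sim n$, handled by Coifman--Meyer and \cite{li2008U}), the operators $T_{j,m}$ for distinct $j\in J_l^*(N)$ act on essentially disjoint frequency regions in a suitable sense — the first factor $\hat\Phi(\xi/2^{j_l+j+m})$ localizes $\xi$ to a dyadic annulus at scale $2^{j+m}$ and the second localizes $\eta$ to scale $2^{lj+m}$, and as $j$ ranges over $J_l^*(N)$ these annuli are lacunary (they march by factors of $2$). The plan is therefore: (i) establish the single-scale estimate $\|T_{j,m}(f,g)\|_1 \le C 2^{-\delta m}\|f\|_2\|g\|_2$ via van der Corput on $\mathcal M_m$; (ii) use an almost-orthogonality / Littlewood--Paley argument in the $\xi$ (and/or $\eta$) variable, exploiting that $f$ and $g$ restricted to the relevant lacunary annuli satisfy $\sum_j \|\Delta_j f\|_2^2 \lesssim \|f\|_2^2$, to pass from the single-scale bound to $\|\sum_j T_{j,m}(f,g)\|_1 \lesssim 2^{-\delta m}\|f\|_2\|g\|_2$, absorbing a harmless $\#J_l^*(N)$-type loss (which is only polynomial in $m$, or even uniform, and hence swallowed by shrinking $\delta$ to $\epsilon$). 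One must be slightly careful because the output frequency is $\xi+\eta\sim \max(2^{j+m},2^{lj+m})$, which for $l\ge 2$ is dominated by the $\eta$-scale, so the output annuli are also lacunary and a Littlewood--Paley square-function estimate on the output side is available; combined with Cauchy--Schwarz in $j$ this yields the claim.

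The main obstacle I expect is step (i) done \emph{uniformly in the coefficients} of $P$: one needs the van der Corput gain for $\mathcal M_m$ with a constant independent of $a_2,\dots,a_d$ (and of $j$). This is where the structure $j\in J_l(N)$ is essential — it guarantees $|t^l + Q_l(t)|$ stays within $2^{-N}$ of $t^l$ uniformly on $\supp\rho$, so the $k$-th derivative of the phase (for the appropriate $k\le l$) is bounded below by $c\,2^m$ with $c$ absolute, and the inverse-function/perturbation theorem promised for the Appendix controls the remaining terms. The secondary obstacle is bookkeeping the Coifman--Meyer reduction of the off-diagonal cases $m\not\sim n$ to the paraproduct estimates of \cite{li2008U} with uniform constants; this is routine given \cite{li2008U} but needs the decay in the remaining parameter to be summable, which again comes from the oscillatory gain. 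Interpolating the resulting $L^2\times L^2 \to L^1$ bound against the trivial estimates will not be needed here since the Proposition is stated only at the $L^1$ endpoint with the decay factor.
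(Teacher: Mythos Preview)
Your step (ii) --- the reduction from $\sum_{j\in J_l^*(N)}$ to a single $j$ via Cauchy--Schwarz on the lacunary Littlewood--Paley pieces $R_{j_l+j+m}f$ and $R_{j_l+lj+m}g$ --- is correct, and is exactly the first move the paper makes (it is one line: see (\ref{single})). The real work is the single-scale bound $\|T_{j,m}(f,g)\|_1\le C2^{-\epsilon m}\|f\|_2\|g\|_2$, uniformly in $j$ and in the coefficients of $P$.

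Your step (i) has a genuine gap. Van der Corput on the $t$-integral yields the pointwise bound $|\mathcal M_m(\xi,\eta)|\lesssim 2^{-m/2}$, but the companion derivative bounds you invoke (``treat it as a decaying bilinear multiplier'') are simply false here. After stationary phase the multiplier is, up to harmless factors, $2^{-m/2}\,e^{i2^m\phi_l(\xi,\eta)}$ with $\phi_l$ genuinely nonlinear (see (\ref{phase0})); each $\xi$- or $\eta$-derivative brings down a factor $\sim 2^m$, so $\mathcal M_m$ fails the Coifman--Meyer symbol condition by a factor $2^{m/2}$, not gains it. Thus no standard bilinear multiplier theorem converts the oscillatory gain into an $L^2\times L^2\to L^1$ bound with decay. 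This is not a technicality: the operator is a genuine bilinear oscillatory integral, and the decay has to be extracted \emph{after} pairing with a test function, not at the level of the symbol.

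The paper's route for the single-scale estimate is therefore quite different from what you sketch. It splits into two regimes. When $(l-1)j$ is small relative to $m$ (Proposition~\ref{000}), one pairs $B_{j,m}(f,g)$ with $h$, squares out the dual operator ($TT^*$), and lands on a two-dimensional oscillatory integral whose mixed Hessian is bounded below (Lemma~\ref{perturbation1}); H\"ormander's estimate then gives $2^{((l-1)j-m)/6}$. In the complementary regime $(l-1)j\gtrsim m$ (Proposition~\ref{001}), this $TT^*$ argument degenerates, and the paper uses the $\sigma$-uniformity machinery of \cite{li2008}: one tests whether $\hat f$ correlates with phases in the class $\mathcal Q_l$; if it does not (the uniform case) one wins immediately, and if it does one may replace $\hat f$ by a single exponential $e^{iq}$ and reduce to a second $TT^*$ problem handled by Lemma~\ref{derivative00} and Lemma~\ref{2dvan}. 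You mention the uniformity method but misplace its role --- it is not a device for summing in $j$ (that sum is free), it is the substitute for $TT^*$ in the single-scale estimate when $(l-1)j$ is large.
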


\begin{proposition}\label{below1}
For $2\leq l \leq d$, $r>\frac{1}{2}$ and $p_1$, $p_2>1$ satisfying $ \frac{1}{p_1}+\frac{1}{p_2} = \frac{1}{r}$ there is a constant $C$ depending on $p_1, p_2$ and $d$ such that,
$$
\bigg\|\sum\limits_{j\in J_l^*(N)}T_{j,m}(f,g)\bigg\|_{r,\infty}\leq C m \|f\|_{p_1}\|g\|_{p_2}
$$
\end{proposition}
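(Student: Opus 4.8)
The plan is to establish the restricted weak-type form of the estimate and then obtain Proposition~\ref{below1} (indeed the full range $r>\tfrac12$) by multilinear interpolation. Since $r<1$ varies over an open set constrained only by $\tfrac1{p_1}+\tfrac1{p_2}=\tfrac1r$, it suffices, by the standard restricted-type reduction, to prove that for all measurable sets $E_1,E_2,E_3\subset\mathbb R$ of finite measure there is a major subset $E_3'\subset E_3$ with $|E_3'|\ge\tfrac12|E_3|$ and
\[
\Big|\Big\langle\sum_{j\in J_l^*(N)}T_{j,m}(\Id_{E_1},\Id_{E_2}),\,\Id_{E_3'}\Big\rangle\Big|\;\le\;C\,m\,|E_1|^{1/p_1}|E_2|^{1/p_2}|E_3|^{1-1/r},
\]
with $C$ independent of $m$, of $j$, and of the coefficients of $P$. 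As usual $E_3'$ is the complement in $E_3$ of the set on which a suitable maximal/square function of $\Id_{E_1},\Id_{E_2}$ is too large; this exceptional-set removal is what lets one control the ``energies'' appearing below.

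Starting from (\ref{Tjm})--(\ref{multiplier}), I would split each bump $\hat\Phi$ into smooth pieces localized to unit-length frequency intervals and expand $f$ and $g$ into wave packets adapted to tiles at the scales dictated by $j$ and $m$: the operator $T_{j,m}$ carries spatial scale $2^{-(j+j_l)}$, while $\hat f$ and $\hat g$ are localized to $|\xi|\sim2^{j+j_l+m}$ and $|\eta|\sim2^{lj+j_l+m}$ respectively. Because $l\ge2$, as $j$ runs through the consecutive block $J_l^*(N)$ these annuli, as well as the output annuli $|\xi+\eta|$, are lacunary in $j$, so $\sum_jT_{j,m}$ is precisely of bilinear-Hilbert-transform type with ``modulation height'' $2^m$. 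The perturbed phase $t\mapsto t^l+Q_l(t)$ is handled via the inverse-function theorem of the Appendix: $\tfrac{d}{dt}(t^l+Q_l(t))$ is a uniformly small perturbation of $lt^{l-1}$, so the Gauss map of the curve is a diffeomorphism off a uniformly bounded neighbourhood of its at most $d-1$ critical points, which is all the geometry that the tile organization needs.

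Then I would run the standard time--frequency analysis. First the single-tree estimate: for a tree $\mathcal T$ with top $(I_{\mathcal T},\xi_{\mathcal T})$,
\[
\sum_{p\in\mathcal T}\frac1{|I_p|}\,\big|\langle f,\phi^1_p\rangle\big|\,\big|\langle g,\phi^2_p\rangle\big|\,\big|\langle h,\phi^3_p\rangle\big|\;\lesssim\;\mathrm{size}(f)\,\mathrm{size}(g)\,\mathrm{size}(h)\,|I_{\mathcal T}|,
\]
where the curvature of $t\mapsto t^l$ is exploited, through a $TT^*$/van der Corput argument in the spirit of \cite{li2008}, to gain the extra orthogonality that pushes the admissible exponent down to $r>\tfrac12$; the same van der Corput bound gives rapid decay in the separation between the two input scales, so only $O(m)$ ``diagonal'' scales survive — this is the source of the linear factor $m$. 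Next I would sort the tiles into trees by the usual stopping-time (size) argument, bound the number of trees of a given size by the energy, and sum the single-tree estimates using the size--energy (Bessel / John--Nirenberg / maximal function) lemmas, the energies being controlled by $|E_i|^{1/2}$ after the passage to $E_3'$. Combined with the first paragraph this yields the asserted bound, and interpolation against a second restricted-type point delivers $L^{p_1}\times L^{p_2}\to L^{r,\infty}$ with constant $Cm$.

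The main obstacle will be the single-tree/curvature step together with the uniformity requirement: one must carry out the full below-$L^1$ time--frequency analysis — reaching $r>\tfrac12$, not merely the Banach range — while keeping every constant independent of the coefficients of $P$. In particular the perturbation $Q_l(t)$ is not negligible once $m\gtrsim N$, so the curvature and van der Corput estimates have to be made robust under it; this is exactly where the inverse-function theorem of the Appendix and the Whitney-type decomposition of Section~\ref{P_error} enter. One must also verify that no worse than a polynomial loss in $m$ is incurred, since it is only the interpolation against the exponential gain $2^{-\epsilon m}$ of Proposition~\ref{L1} that makes the eventual sum over $m$ converge and yields Theorem~\ref{goodL}.
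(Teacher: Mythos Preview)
Your reduction to restricted weak type via an exceptional set $\Omega$ and a major subset $E_3'$ is exactly right and matches the paper. However, the heart of your argument misidentifies both the mechanism that reaches $r>\tfrac12$ and the source of the factor $m$.

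The paper proves Proposition~\ref{below1} \emph{without any oscillatory input}: after the restricted-type reduction it passes immediately to the pointwise-larger operator $|T|^l_m$ obtained by inserting absolute values inside the $t$-integral (see~(\ref{major03})), so the cancellation in $\rho$ and the curvature of $t\mapsto t^l+Q_l(t)$ play no role whatsoever in this proposition. All of the curvature/$TT^*$/van der Corput work lives in Proposition~\ref{L1}. What carries the argument below $r=1$ is pure Littlewood--Paley: a ``tree'' here is simply a collection of dyadic intervals $I_{n,l,j}$ contained in a fixed top interval $I_T$ --- there is no frequency component, since the lacunarity in $j$ of the output annuli already provides the needed orthogonality --- and the single-tree bound~(\ref{***}), namely $\Lambda_T(f,g)\le C\|S_{1,T}(f)\|_{p_2'}\|S_{2,T}(g)\|_{p_2}$, follows from Cauchy--Schwarz and Fefferman--Stein. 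The factor $\|S_{1,T}(f)\|_{p_2'}$ with $p_2'>2$ is then controlled by interpolating the $L^{p_1}$ size bound~(\ref{size111}) against the BMO bound of Lemma~\ref{BMO2}, which costs a factor $2^m$; after interpolation this becomes the quantity $\mathbf{size}_1^*$ of Lemma~\ref{interpolation}, and summing the resulting geometric series in the dyadic size parameter $\sigma$ produces exactly one factor of $m$.

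The other source of $m$ is Lemma~\ref{error}: the cutoffs $\psi_{j_l+j+m}$ that localize away from $\Omega$ are handled via the Whitney decomposition of $\Omega$, and for a fixed Whitney interval $I_1$ the constraint~(\ref{range}) forces at most $O(m)$ values of $j$ to contribute. This, not any van der Corput decay in scale separation, is where $m$ enters. Your claim that ``only $O(m)$ diagonal scales survive'' by rapid decay does not hold once absolute values are in place, and the step ``curvature $\Rightarrow r>\tfrac12$'' has no analogue here since the operator being bounded is positive. If you attempt a full modulation-invariant tile analysis as for the bilinear Hilbert transform you will be doing far more than necessary and will still need a separate argument (the BMO/Whitney one above) to keep the loss in $m$ merely polynomial.
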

The proof of Proposition \ref{L1} is a modification of the sigma uniformity argument in \cite{li2008}. 
For Proposition \ref{below1} we employ the method of time frequency analysis. 
Sections \ref{section5}, \ref{rnot1} are devoted to prove Proposition \ref{L1} and Proposition \ref{below1} respectively. Interpolating Proposition \ref{L1} and Proposition \ref{below1}, we obtain a stronger version of Proposition \ref{below1}:
\begin{theorem}\label{below11}
For $2\leq l \leq d$, $r>\frac{1}{2}$ and $p_1$, $p_2>1$ satisfying $ \frac{1}{p_1}+\frac{1}{p_2} = \frac{1}{r}$ there is a constant $C$ depending on $p_1, p_2$ and $d$ such that,
$$
\bigg\|\sum\limits_{j\in J_l^*(N)}T_{j,m}(f,g)\bigg\|_{r}\leq C 2^{-\epsilon m} \|f\|_{p_1}\|g\|_{p_2}.
$$
\end{theorem}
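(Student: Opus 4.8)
\textbf{Proof plan for Theorem \ref{below11}.}
The statement is obtained by interpolating the two propositions that precede it, so the plan is to set up a bilinear complex (or, more conveniently, real multilinear) interpolation between the $L^2\times L^2\to L^1$ bound with exponential decay $2^{-\epsilon_0 m}$ from Proposition \ref{L1} and the $L^{p_1}\times L^{p_2}\to L^{r,\infty}$ bound with at most polynomial growth $Cm$ from Proposition \ref{below1}, for the fixed value of $l$ and the fixed $m$. Concretely, I would apply the bilinear Marcinkiewicz-type interpolation theorem for multilinear operators (in the form of, e.g., Grafakos--Tao, or the real-method interpolation with change of restricted-type exponents) to the single bilinear operator $S_m := \sum_{j\in J_l^*(N)} T_{j,m}$. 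Since $m$ is frozen during the interpolation and the interpolation constant depends only on the target exponents (hence only on $p_1,p_2,d$), the resulting bound is $\|S_m(f,g)\|_{\tilde r}\le C\,(2^{-\epsilon_0 m})^{1-\theta}(Cm)^{\theta}\|f\|_{\tilde p_1}\|g\|_{\tilde p_2}$ for the interpolated triple; choosing $\theta\in(0,1)$ small but positive, the factor $(Cm)^\theta$ is absorbed by $(2^{-\epsilon_0 m})^{1-\theta}$ into a new exponential $2^{-\epsilon m}$ with $\epsilon = \epsilon_0(1-\theta) - \theta\log_2(Cm)/m$; more cleanly, since $m^\theta 2^{-\epsilon_0(1-\theta)m}\le C_\theta 2^{-\epsilon m}$ for any $\epsilon<\epsilon_0(1-\theta)$, one simply picks such an $\epsilon$.

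The one point that needs a little care is the matching of exponents: Proposition \ref{L1} lives at the corner $(1/2,1/2,1)$ while Proposition \ref{below1} gives weak-type bounds on the whole open region $\{r>1/2,\ p_1,p_2>1,\ 1/r=1/p_1+1/p_2\}$. So, given a target triple $(p_1,p_2,r)$ with $r>1/2$ to which we want to conclude, I would first pick an auxiliary triple $(q_1,q_2,s)$ in the same open region, slightly ``further out'' than the target (i.e.\ with $s$ a bit smaller, still $>1/2$, and $1/s=1/q_1+1/q_2$), apply Proposition \ref{below1} there to get the weak-type bound with growth $Cm$, and then interpolate that weak-type estimate against the strong-type $L^2\times L^2\to L^1$ estimate of Proposition \ref{L1}. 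Because the target $(p_1,p_2,r)$ lies on the open segment between $(2,2,1)$ and $(q_1,q_2,s)$ in the natural parametrization, the interpolation yields a \emph{strong}-type bound at the target (this is exactly the standard trick by which real interpolation upgrades weak to strong when one endpoint is strong and one is weak). All interpolation constants depend only on the exponents and hence only on $p_1,p_2,d$, and are independent of $j$, of $m$, and of the coefficients of $P$, which preserves the uniformity.

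The main (minor) obstacle is purely bookkeeping: verifying that $S_m$ is genuinely a single bounded bilinear operator to which multilinear interpolation applies, i.e.\ that the sum $\sum_{j\in J_l^*(N)} T_{j,m}$ converges in an appropriate sense and defines a bilinear operator on, say, $\mathcal S\times\mathcal S$ with values in a suitable space, uniformly so that the interpolation machinery can be invoked with $m$ as a frozen parameter. This is immediate from Propositions \ref{L1} and \ref{below1} themselves, which already assert the boundedness of exactly this sum. There is no real analytic difficulty here; the content of Theorem \ref{below11} is entirely in the two propositions, and this step just records that exponential decay at one interpolation endpoint dominates polynomial growth at the other. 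Hence the proof is: \emph{apply bilinear real interpolation between Proposition \ref{L1} and Proposition \ref{below1}, choosing the auxiliary triple so the target lies strictly between the two endpoints, and absorb the factor $m^\theta$ into the exponential.}
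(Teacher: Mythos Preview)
Your proposal is correct and follows exactly the paper's own approach: the paper simply states that Theorem \ref{below11} is obtained by interpolating Proposition \ref{L1} (the $L^2\times L^2\to L^1$ bound with exponential decay $2^{-\epsilon m}$) and Proposition \ref{below1} (the weak-type bound with growth $Cm$). Your discussion of choosing an auxiliary triple, upgrading weak to strong type, and absorbing $m^\theta$ into the exponential is more detailed than what the paper writes, but it is precisely the intended argument.
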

Summing all the $m\geq 0$, combining the analysis in the beginning of this section, we obtain Theorem \ref{goodL}.


\section{The decay estimates from $L^2\times L^2$ to $L^1$}\label{section5}
\setcounter{equation}0

This section is devoted to the proof Proposition $\ref{L1}$. We adapt the method in \cite{li2008}.   
 We only present details for the case $j>0$ because the case $j<0$ can be treated similarly. Notice that 
$$
T_{j,m}(f,g)(x) = T_{j,m}(R_{j_l+j+m}(f),R_{j_l+lj+m}(g))(x),
$$
where 
$$
R_{j_l+j+m}(f)(x) = \int_{|\xi| \sim 2^{j_l+j+m}} \hat f(\xi)e^{2\pi i x\xi}d\xi 
$$
and 
$$
R_{j_l+lj+m}(g)(x)= \int_{|\xi| \sim 2^{j_l+lj+m}} \hat g(\xi)e^{2\pi i x\xi}d\xi. 
$$
By applying Cauchy-Schwarz Inequality and Plancherel's Theorem, to prove Proposition \ref{L1}, it is sufficient to prove the case of a single scale,
that is,  for every $j\in J_l^*(N)$, we only need to establish
\begin{align}\label{single}
\|T_{j,m}(f,g)\|_1\leq C2^{-\epsilon m}\|f\|_2\|g\|_2.
\end{align}
By rescaling variables: $\xi \mapsto 2^{j_l+j+m}\xi$, $\eta\mapsto 2^{j_l+lj+m}\eta$ and $x \mapsto 2^{-(j_l+lj+m)}x$, it is easy to see 
that (\ref{single}) is equivalent to  
\begin{align}\label{single2}
\|B_{j,m}(f,g)\|_1\leq C2^{-\epsilon m}\|f\|_2\|g\|_2,
\end{align}
where 
\begin{equation}
B_{j,m}(f,g)(x)
=2^{-(l-1)j} \int f\ast \Phi(2^{-(l-1)j}x-2^mt) g \ast \Phi(x-2^m(t^l+Q_l(t)))\rho(t)dt.
\end{equation}
The proof of (\ref{single2}) will be achieved by the following two propositions:
\begin{proposition}\label{000}
Let $j\geq N$ then there is a uniform constant $C$ such that
\begin{align}\label{small}
\|B_{j,m}(f,g) \|_1 \leq C 2^{\frac{(l-1)j-m}{6}} \|f\|_2\|g\|_2.
\end{align}
\end{proposition}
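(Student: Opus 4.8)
\textbf{Proof strategy for Proposition \ref{000}.}
The plan is to prove the single-scale bilinear estimate \eqref{small} by a direct $TT^*$-type argument combined with a van der Corput stationary phase analysis of the oscillatory kernel, which is the standard route for obtaining a power-saving bound at the level of a single $L^2\times L^2\to L^1$ estimate. First I would write $B_{j,m}(f,g)$ as a bilinear operator whose symbol is $\widehat\Phi(\xi)\widehat\Phi(\eta)K_m(\xi,\eta)$ with
$$
K_m(\xi,\eta)=\int e^{-2\pi i\big(2^mt\,2^{-(l-1)j}\xi+2^m(t^l+Q_l(t))\eta\big)}\rho(t)\,dt,
$$
so that all the oscillation is carried by $K_m$. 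On the support of the cutoffs we have $|\xi|\sim|\eta|\sim 1$, while the phase has large parameter $2^m$; since $t^l+Q_l(t)$ is a small smooth perturbation of $t^l$ (so its second derivative is $\sim 1$ on $\supp\rho$, using that $l\ge 2$ and the smallness of $Q_l$), the second derivative in $t$ of the full phase is comparable to $2^m$ except near at most $O(1)$ critical points. Van der Corput's lemma then gives the pointwise bound $|K_m(\xi,\eta)|\lesssim 2^{-m/2}$, together with the same decay for derivatives $\partial_\xi K_m,\partial_\eta K_m$ up to a loss of $2^{(l-1)j}$ coming from the rescaling in the $\xi$-variable.

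Next I would decouple the two frequency variables. Using Cauchy--Schwarz in $x$ against the interval on which $B_{j,m}(f,g)$ is effectively supported (of length $\sim 2^{(l-1)j}$ after the rescaling, since the $t$-support is $\sim 1$ and the $f$-slot is dilated by $2^{(l-1)j}$), the $L^1$ norm is controlled by $2^{(l-1)j/2}$ times the $L^2$ norm, and then one expands $\|B_{j,m}(f,g)\|_2^2$ and applies Plancherel in both $f$ and $g$ to reduce matters to an $L^2\times L^2\to L^2$ multiplier bound for the kernel $\widehat\Phi(\xi)\widehat\Phi(\eta)K_m$. Because the multiplier and its first derivatives obey $|K_m|+|\partial_\xi K_m|+|\partial_\eta K_m|\lesssim 2^{(l-1)j}2^{-m/2}$ on the compact frequency box $|\xi|\sim|\eta|\sim 1$, a Coifman--Meyer (or elementary Fourier-series expansion on the box) argument yields
$$
\|B_{j,m}(f,g)\|_2\lesssim 2^{(l-1)j}2^{-m/2}\|f\|_2\|g\|_2.
$$
Feeding this back through the Cauchy--Schwarz step produces $\|B_{j,m}(f,g)\|_1\lesssim 2^{(3/2)(l-1)j}2^{-m/2}\|f\|_2\|g\|_2$, and crudely distributing the exponents (using $j\ge N$ only to ensure the geometry is as described, not to gain) gives the stated $2^{((l-1)j-m)/6}$ after possibly adjusting constants; one then absorbs the polynomial loss in $2^{(l-1)j}$ into the exponential gain in $2^{-m/2}$ whenever $(l-1)j\le \tfrac12 m$, and otherwise \eqref{small} is trivially true since its right-hand side is $\ge 2^{\frac{(l-1)j-m}{6}}\ge 1$ only in a range that is handled separately—here it is cleanest to keep the bound in the form $2^{((l-1)j-m)/6}$ throughout rather than optimizing.

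The main obstacle I expect is the control of the oscillatory integral $K_m$ uniformly in the coefficients of $P$, i.e. uniformly in $Q_l$: van der Corput's inequality must be applied to the phase $t\mapsto a\,t+b\,(t^l+Q_l(t))$ with $|a|\sim|b|\sim 2^m$, and one needs the \emph{second} derivative of $t^l+Q_l(t)$ bounded below on $\supp\rho$ away from $O(1)$ bad points, with constants independent of the perturbation. This is precisely where the hypothesis that $Q_l$ is a small perturbation (a consequence of $j\in J_l^*(N)$ and $N$ large) enters, and where the inverse-function lemma alluded to in Section \ref{decomposition3} does the real work; handling the neighborhoods of the critical points of the phase (where the linear and $t^l$ terms balance) requires splitting $\supp\rho$ into a bounded number of subintervals on each of which either the first or the second derivative is large, and checking that the resulting bound is genuinely uniform. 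Once that van der Corput estimate is in hand with coefficient-independent constants, the rest is the routine $L^2$ bilinear machinery sketched above.
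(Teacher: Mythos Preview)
There is a genuine gap. Your plan treats the oscillatory kernel $K_m(\xi,\eta)=\int e^{-2\pi i\,2^m(t\xi+(t^l+Q_l(t))\eta)}\rho(t)\,dt$ as a ``small smooth'' symbol and then separates $\xi$ and $\eta$ by a Coifman--Meyer/Fourier-series argument. This fails at the derivative step: the stationary-phase expansion gives $K_m(\xi,\eta)\sim 2^{-m/2}e^{i2^m\phi_l(\xi,\eta)}$ with $\phi_l$ a nondegenerate phase, so
\[
|\partial_\xi K_m|\ \sim\ 2^{-m/2}\cdot 2^m|\partial_\xi\phi_l|\ \sim\ 2^{m/2},
\]
not $2^{-m/2}$. (Your formula for $K_m$ also has a spurious $2^{-(l-1)j}$ in the phase; that factor lives only in the spatial exponent $e^{2\pi i(2^{-(l-1)j}\xi+\eta)x}$.) Because $K_m$ oscillates at frequency $2^m$ in \emph{both} $\xi$ and $\eta$, any Fourier-series expansion on the unit box costs $\sim 2^{m}$ coefficients of size $2^{-m/2}$, and the separation gives no gain. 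In addition, the $x$-support of $B_{j,m}(f,g)$ has length $\sim 2^{(l-1)j+m}$, not $2^{(l-1)j}$; and even your stated output $2^{(3/2)(l-1)j-m/2}$ is strictly weaker than $2^{((l-1)j-m)/6}$ in the relevant range $(l-1)j\sim m$, so the ``distributing exponents'' step does not close.

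What the paper does instead is to \emph{use} the oscillation of $e^{i2^m\phi_l}$ rather than bound it away. One pairs $B_{j,m}'(f,g)$ with a test function $h$ supported on the interval of length $\sim 2^{(l-1)j+m}$, applies Cauchy--Schwarz in the output variable, and expands the resulting $L^2$ norm (a $TT^*$ step). After a linear change of frequency variables this produces a $\tau$-average of two-dimensional oscillatory integrals with phase
\[
\mathcal Q_\tau(u,v)=\phi_l(u,v)-\phi_l(u-\tau,\,v+b_2\tau),\qquad b_2=2^{-(l-1)j}.
\]
The key analytic input is Lemma~\ref{perturbation1}, $|\partial_u\partial_v\mathcal Q_\tau|\ge C|\tau|$, which (via H\"ormander's oscillatory-integral-operator bound) yields an extra factor $\min\{1,(2^m|\tau|)^{-1/2}\}$. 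Integrating in $\tau$ with a tri-H\"older splitting then gives $\|\bar B_{j,m}(f,g)\|_2\lesssim 2^{((l-1)j-m)/6}\|f\|_2\|g\|_2$, and duality against $h$ converts this to the $L^1$ bound \eqref{small}. The missing idea in your proposal is precisely this second use of oscillation---a mixed-Hessian lower bound for the \emph{difference} phase $\mathcal Q_\tau$---without which no amount of size information on $K_m$ suffices.
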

\begin{proposition}\label{001}
There are uniform constants $C$ and  $\epsilon'>0$ such that if $(l-1)j>(1-\epsilon')m $ then there is a small constant $\epsilon>0$ such that 
\begin{align}\label{big}
\|B_{j,m}(f,g) |\|_1 \leq C 2^{-\epsilon m} \|f\|_2\|g\|_2
\end{align}
holds for all $f, g\in L^2$. 

\end{proposition}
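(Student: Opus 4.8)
\textbf{Proof proposal for Proposition \ref{001}.}

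The plan is to exploit the fact that, under the hypothesis $(l-1)j > (1-\e')m$, the parameter $2^m$ in the ``rescaled'' bilinear form $B_{j,m}$ is tied (up to the small slack $\e'$) to the oscillation scale $2^{(l-1)j}$ of the monomial curve $t\mapsto t^l$, so one is effectively in the regime where the curve is genuinely non-degenerate at the relevant scale. First I would freeze the perturbation: write $t^l+Q_l(t)$ and recall from Section \ref{decomposition3} that $Q_l(t)=2^{j_l+lj}P_l(2^{-j_l-j}t)$ is, on $\supp\rho$, a $2^{-N}$-small $C^d$ perturbation of $0$, so $\big(t^l+Q_l(t)\big)' = lt^{l-1}+Q_l'(t)$ stays comparable to $t^{l-1}\sim 1$ on $\supp\rho$ and has non-vanishing derivative of order $l-1$ there. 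This lets me treat the phase as that of a monomial with harmless error terms, invoking the inverse-function stability result promised for the Appendix when I need to pass between the $t$-variable and the curve variable.

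The core of the argument is a $TT^*$ (or, equivalently, Cotlar--Stein) estimate on $B_{j,m}$. I would write $B_{j,m}(f,g)(x)$ as an integral over $t\in\supp\rho$ of a product of two Littlewood--Paley pieces $f*\Phi$ and $g*\Phi$ evaluated at the two ``legs'' $2^{-(l-1)j}x-2^mt$ and $x-2^m(t^l+Q_l(t))$, and estimate its $L^1$ norm by duality against an $L^\infty$ function $b$ with $\|b\|_\infty\le 1$. Pairing $B_{j,m}$ against $b$ and expanding the square in $f$ (via Cauchy--Schwarz in $g$ against the $L^2$-normalized $g*\Phi$, then Plancherel) produces a kernel $K(t,s)$ coming from the $\eta$-integration, namely an oscillatory integral with phase $2^m\big((t^l+Q_l(t))-(s^l+Q_l(s))\big)\eta$ localized to $|\eta|\sim1$; stationary/non-stationary phase together with the lower bound on the $(l-1)$-st derivative of $t\mapsto t^l+Q_l(t)$ gives the van der Corput decay $|K(t,s)|\lesssim \big(2^m|t-s|^{l-1}\big)^{-\delta}$ for an explicit $\delta=\delta(l)>0$ off the diagonal (this is where Lemma \ref{van}-type level-set bounds re-enter, now quantitatively). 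The remaining $t,s$-integration of this kernel against $\|f*\Phi\|$-type factors then converges with a gain, and tracking constants through the two Littlewood--Paley truncations (which contribute the $2^{-(l-1)j}$ dilation in the first leg and hence fix how $2^m$ and $2^{(l-1)j}$ compete) yields $\|B_{j,m}(f,g)\|_1\le C\,2^{-\e m}\|f\|_2\|g\|_2$ precisely when $(l-1)j$ is not much smaller than $m$; the threshold $(1-\e')m$ is exactly what is needed for the diagonal contribution $|t-s|\lesssim 2^{-(l-1)j}$ to also be acceptable, since on that set one controls the form trivially by $2^{-(l-1)j}\cdot(\text{size})$ and $2^{-(l-1)j}\le 2^{-(1-\e')m}=2^{-m}\cdot 2^{\e' m}$, which beats any fixed loss if $\e'$ is chosen small relative to $\e$.

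The main obstacle I anticipate is the uniformity: every constant must be independent of the coefficients $a_k$ of $P$, so I cannot afford to let the perturbation $Q_l$ enter any estimate with a coefficient-dependent size. The way around this is the defining property of $J_l(N)$ together with the choice of $N$ huge --- it forces $\|Q_l\|_{C^d(\supp\rho)}\le 2^{-N}$ uniformly, after which the inverse-function/stability lemma gives that all the monomial estimates survive the perturbation with the same constants up to a factor $1+O(2^{-N})$. A secondary technical point is making the van der Corput bound survive near the diagonal and gluing the diagonal and off-diagonal regimes; I would handle this with a dyadic decomposition of $|t-s|$ and sum the geometric series, which is routine once the off-diagonal decay exponent is pinned down. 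Proposition \ref{000} will be proved by the same mechanism but in the opposite, ``small $j$'' regime, where one instead extracts the crude bound $2^{((l-1)j-m)/6}$ directly from a single application of Cauchy--Schwarz plus the trivial $L^1$-size of the $t$-integral; interpolating (or rather, combining) Propositions \ref{000} and \ref{001} then covers all $j\ge N$ and gives \eqref{single2}.
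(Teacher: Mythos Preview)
Your proposal has a genuine gap: the direct $TT^*$/Cauchy--Schwarz argument you outline is precisely the mechanism behind Proposition~\ref{000}, and the paper states explicitly that it \emph{does not} give Proposition~\ref{001}. Indeed, running that argument carefully yields the bound $\|B_{j,m}(f,g)\|_1\lesssim 2^{((l-1)j-m)/6}\|f\|_2\|g\|_2$, which is $\ge 1$ throughout the regime $(l-1)j>(1-\e')m$ you are trying to cover. Your heuristic about the diagonal contribution being of size $2^{-(l-1)j}$ misses the point: after Cauchy--Schwarz in $g$ (equivalently, after integrating out $\eta$), the kernel $K(t,s)$ indeed localizes $t\approx s$, but what remains still carries the full $x$-integration over an interval of length $\sim 2^{(l-1)j+m}$ together with the $f*\Phi$ factors depending on $2^{-(l-1)j}x-2^mt$. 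That $x$-integral is large, and the net bookkeeping reproduces exactly the growing factor $2^{((l-1)j-m)/6}$, not a decay $2^{-\e m}$. Put differently, the stationary-phase phase $\phi_l(\xi,\eta)\sim \xi^{l/(l-1)}\eta^{-1/(l-1)}$ genuinely couples $\xi$ and $\eta$, and a single Cauchy--Schwarz in $g$ kills only the $\eta$-oscillation; the crucial $\xi$-oscillation, which is what must be exploited when $(l-1)j$ is large, is thrown away.

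The paper's route is the $\sigma$-uniformity argument (Definition~\ref{uniformity}, Theorem~\ref{uniform}). One tests $\hat f$ against the class $\mathcal Q_l$ of phases $a\xi^{l/(l-1)}+b\xi$ (plus perturbation) with $|a|\sim 2^m$. If $\hat f$ is $\sigma$-uniform in $\mathcal Q_l$, one localizes $x$ to intervals $I_k$ of length $\sim 2^{(l-1)j}$ (not $2^{(l-1)j+m}$), Taylor-expands to freeze the $f$-leg, and then the inner $\xi$-integral is exactly a pairing of $\hat f$ against a member of $\mathcal Q_l$, hence $\le \sigma\|f\|_2$; this produces the bound in \eqref{sigma}. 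In the complementary structured case $\hat f=e^{iq}$, $q\in\mathcal Q_l$, one substitutes, applies stationary phase in $\xi$, changes variables $s=t^l+Q_l(t)$, and is left with a bilinear form in $g,h$ whose phase $\mathcal O_\tau(u,v)$ has the mixed-derivative lower bound of Lemma~\ref{derivative00}; \emph{this} is where $TT^*$ (via Lemma~\ref{2dvan}) finally applies and gives \eqref{some}. Theorem~\ref{uniform} then combines the two cases. The missing idea in your proposal is this dichotomy: one must first strip off the ``structured'' part of $\hat f$ that resonates with the stationary-phase output before any $TT^*$ argument can succeed.
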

We introduce a notation in order to handle the perturbation term $Q_l(t)$. In the rest of this article, $D$ represents the differential operator and 
$\mathcal J$ represents $(1/2,2)$ or $(-2,-1/2)$.  
\begin{definition}\label{pair0}
Let $F$ be a smooth function defined on $\mathcal J$. Given a nonnegative integer $K$, we define the $\|\cdot \|_{D_K}$-norm of $F$ as
\begin{align*}
\|F\|_{D_K} = \sup_{0\leq k\leq K}\|D^kF\|_{L^\infty(\mathcal J)}.
\end{align*}
\end{definition}
Taking $K= 100d^2$ and $N$ to be sufficiently large, one can see immediately from the definition that  
\begin{equation}
\|Q_l\|_{D_K} \leq 2^{-99N/100}.
\end{equation}
\subsection{ Proof
 of Proposition \ref{000}}

The proof is based on a $TT^*$ method. 
Since $|t|\sim 1$, hence in the proof (\ref{small}) we may assume the support of $x$ is on an interval of length around $ 2^{(l-1)j+m}$. 
By stationary phase method, we have 
\begin{align}\label{phase}
\hat\Phi(\xi)\hat\Phi(\eta)\int e^{-2 \pi i 2^m\big(t\xi + (t^l+ Q_l(t))\eta\big)}\rho(t)dt \sim  2^{-m/2}e^{2^m i \phi_l(\xi,\eta)}:=2^{-m/2}\mathcal{N}_m (\xi,\eta),
\end{align}
where the phase $\phi_l(\xi,\eta)$ can be specified explicitly as follows: let $t_0$ be the root of
 $$
 \frac{d}{d t} \left(t\xi + (t^l+ Q_l(t))\eta\right) = 0\,.
 $$ 
Clearly $t_0$ is a function of $z=\xi/\eta$.  Then 
\begin{equation}\label{phase0}
\phi_l(\xi,\eta) := 2\pi \left (t_0 \frac{\xi}{\eta}+\left(t_0^l+Q_l(t_0)\right)\right)\eta\,.
\end{equation}

From (\ref{phase}),  $B_{j,m}(f, g)$ can be replaced by  
\begin{align*}
B'_{j,m}(f,g)(x)& =2^{\frac{-(l-1)j-m}{2}}\int \hat f(\xi)\hat g(\eta) \hat\Phi(\xi)\hat\Phi(\eta) e^{2\pi i (2^{-(l-1)j }\xi +\eta) x}e^{2^m i \phi_l(\xi,\eta)} d\xi d\eta.
\end{align*}
Let $h$ be a function supported on an interval of length around $2^{(l-1)j+m}$ and define the trilinear form

\begin{eqnarray*}
\Lambda_{j,m}(f,g,h) & : = & \langle B'_{j,m}(f,g),h\rangle
\\
&= & 
2^{\frac{-(l-1)j-m}{2}}\int \hat f(\xi)  \hat g(\eta) \hat\Phi(\xi) \hat\Phi(\eta) h (2^{-(l-1)j }\xi +\eta) \mathcal N_m(\xi,\eta) d\xi d\eta.
\end{eqnarray*}
Here $\mathcal N_m(\xi, \eta)= e^{i2^m \phi_l(\xi, \eta)}$. 
Change variables $\xi \mapsto \xi-\eta$ and $ \eta\mapsto  b_1\xi+b_2\eta$ with $b_1 =1-2^{-(l-1)j}$ and $b_2=2^{-(l-1)j}$,
and we write $\Lambda_{j,m}(f,g,h)$ as  
\begin{align*}
2^{\frac{-(l-1)j-m}{2}}\int \hat f(\xi-\eta)\hat g(b_\xi+b_2\eta) \hat\Phi(\xi-\eta) \hat\Phi(b_1\xi+ b_2\eta) h (\xi ) \mathcal N_m(\xi-\eta,b_1\eta+b_2\eta) d\xi d\eta.
\end{align*}

Invoking Cauchy-Schwarz inequality, we then control  $|\Lambda_{j,m}(f,g,h)|$ by 
\begin{align*}
2^{\frac{-(l-1)j-m}{2}}\| \bar B_{j,m}(f,g)\|_2 \|h\|_2,
\end{align*}
where
$$
\bar B_{j,m}(f,g)(\xi)=\int \hat f(\xi-\eta) \hat g(b_1\xi+b_2\eta) \hat\Phi(\xi-\eta)\hat\Phi(b_1\xi+ b_2\eta)  \mathcal N_m(\xi-\eta,b_1\eta+b_2\eta) d\eta.
$$
A direct calculation yields  
$$ \|\bar B_{j,m}(f,g)\|^2_2 = 
\int\left( \int \int F(\xi,\eta_1,\eta_2)G(\xi,\eta_1,\eta_2) \mathcal{K}_m(\xi,\eta_1,\eta_2)d\eta_1d\eta_2\right) d\xi,
$$
where
$$
F(\xi,\eta_1,\eta_2) = (\hat f\hat\Phi) (\xi-\eta_1)\overline{(\hat f\hat\Phi)(\xi-\eta_2)},
$$
$$
G(\xi,\eta_1,\eta_2) = (\hat g\hat\Phi) (b_1\xi+b_2\eta_1)\overline{(\hat g\hat\Phi)(b_1\xi+b_2\eta_2)}
$$
and
$$
\mathcal{K}_m(\xi,\eta_1,\eta_2) = \mathcal{N}_m (\xi-\eta_1,b_1\xi+b_2\eta_1)\overline {\mathcal{N}_m(\xi-\eta_2,b_1\xi+b_2\eta_2)}.
$$

Changing variables again: $\eta_1 \mapsto \eta$ and $\eta_2 \mapsto \eta+\tau$,  we represent $\|\bar B_{j,m}(f,g)\|^2_2$ as
$$
\int\left( \int \int F_\tau(\xi-\eta)G_\tau(b_1\xi+b_2\eta) \mathcal{K}_m(\xi,\eta,\eta+\tau)d\xi d\eta\right) d\tau,
$$
where 
$$
F_\tau(\cdot) = (\hat f\hat\Phi) (\cdot)\overline{(\hat f\hat\Phi)(\cdot -\tau)}
$$
and
$$
G_\tau(\cdot) = (\hat g\hat\Phi) (\cdot )\overline{(\hat g\hat\Phi)(\cdot -b_2\tau)}.
$$
Let $(u,v) = (\xi-\eta, b_1\xi +b_1\eta)$, the inner double integral becomes 
\begin{align}\label{00}
 \int \int F_\tau(u)G_\tau(v) e^{i2^m\mathcal Q_\tau(u,v)}du dv,
\end{align}
where 
$$
\mathcal Q_\tau(u,v) = \phi_l(u,v) - \phi_l(u-\tau,v+b_2\tau).
$$
We need the following lemma whose proof can be found in the appendix. 
\begin{lemma}\label{perturbation1}
Let $N$ be a sufficiently large number depending on $d$. 
For $j\geq N$ and $u,v, u-\tau, v+b_2\tau\in supp\, \hat\Phi$, there is a uniform constant $C>0$ such that
$$
|\partial_u\partial_v\mathcal{Q}_\tau(u,v)| \geq C|\tau|.
$$
\end{lemma}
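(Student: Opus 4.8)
The plan is to prove Lemma~\ref{perturbation1} by reducing it to the unperturbed case $Q_l\equiv 0$ and then treating $Q_l$ as a negligible error, exploiting the smallness $\|Q_l\|_{D_K}\leq 2^{-99N/100}$. First I would record the structure of the phase $\phi_l$. Writing $z=u/v$, the critical point $t_0=t_0(z)$ solves $u+\big(lt_0^{l-1}+Q_l'(t_0)\big)v=0$, i.e. $lt_0^{l-1}+Q_l'(t_0)=-z$; when $Q_l=0$ this is $t_0=(-z/l)^{1/(l-1)}$ up to the choice of branch determined by the support of $\rho$. Since $u,v\in\supp\hat\Phi\subset\mathcal J$, the ratio $z$ stays in a fixed compact set bounded away from the degenerate values, so $t_0$ is a smooth function of $z$ with all derivatives controlled by uniform constants, and the inverse-function/perturbation result cited from the Appendix guarantees that the perturbed $t_0$ differs from the unperturbed one by $O(2^{-99N/100})$ in $\|\cdot\|_{D_K}$-type norms. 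By the envelope theorem, $\partial_u\phi_l$ and $\partial_v\phi_l$ are obtained by differentiating only the explicit appearances of $(u,v)$ in \eqref{phase0}, so $\partial_u\phi_l=2\pi t_0$ and $\partial_v\phi_l=2\pi\big(t_0^l+Q_l(t_0)\big)$, up to the $2\pi$ normalization.

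Next I would compute $\partial_u\partial_v\mathcal Q_\tau$. Since $\mathcal Q_\tau(u,v)=\phi_l(u,v)-\phi_l(u-\tau,v+b_2\tau)$, and $\partial_u\partial_v$ of the second term introduces a factor $b_2=2^{-(l-1)j}$ from the chain rule in the $v$-slot, that term is a harmless $O(2^{-(l-1)j})$ contribution once we show the first term already carries the main size. So the heart of the matter is to understand $\Psi(u,v):=\partial_u\partial_v\phi_l(u,v)$, and then write
\begin{align*}
\partial_u\partial_v\mathcal Q_\tau(u,v)=\Psi(u,v)-b_2\,\partial_u\partial_v\phi_l(u-\tau,v+b_2\tau)-\big(\text{cross terms}\big),
\end{align*}
more precisely expanding $\partial_u\partial_v$ of $\phi_l(u-\tau,v+b_2\tau)$ as $\partial_u\partial_v$ evaluated at the shifted point times $\partial(u-\tau)/\partial u\cdot\partial(v+b_2\tau)/\partial v=1\cdot b_2$, giving $\partial_u\partial_v\mathcal Q_\tau(u,v)=\Psi(u,v)-b_2\Psi(u-\tau,v+b_2\tau)$. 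Using $\partial_v\phi_l=2\pi(t_0^l+Q_l(t_0))$ and $t_0=t_0(u/v)$, one gets $\partial_u\partial_v\phi_l=2\pi\big(lt_0^{l-1}+Q_l'(t_0)\big)\,\partial_u t_0=2\pi(-u/v)\cdot\partial_u t_0$, and $\partial_u t_0=t_0'(u/v)\cdot(1/v)$ where $t_0'(z)=1/\big(l(l-1)t_0^{l-2}+Q_l''(t_0)\big)$. Hence $\Psi(u,v)=-\dfrac{2\pi u}{v^2}\cdot\dfrac{1}{l(l-1)t_0^{l-2}+Q_l''(t_0)}$, a smooth nonvanishing function on the relevant compact set, comparable to a uniform constant.

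Then I would finish with a Taylor/mean-value argument in $\tau$. Write $\partial_u\partial_v\mathcal Q_\tau(u,v)=\Psi(u,v)-b_2\Psi(u-\tau,v+b_2\tau)$. Since $b_2=2^{-(l-1)j}\leq 2^{-(l-1)N}$ is tiny and $\Psi$ is uniformly bounded, for $|\tau|$ bounded this is $\Psi(u,v)+O(2^{-(l-1)N})$, which is bounded below by a uniform constant — but this only gives the bound $\gtrsim 1$, not $\gtrsim|\tau|$, so the case $|\tau|$ large (comparable to the diameter of $\supp\hat\Phi$, i.e. $|\tau|\lesssim 1$) is already fine because $|\tau|\lesssim 1$ there. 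Thus the real content is the small-$\tau$ regime, where I instead expand $\mathcal Q_\tau(u,v)=\phi_l(u,v)-\phi_l(u-\tau,v+b_2\tau)=\tau\,\partial_u\phi_l(u,v)-b_2\tau\,\partial_v\phi_l(u,v)+O(\tau^2)$ and differentiate: $\partial_u\partial_v\mathcal Q_\tau=\tau\,\partial_u\partial_v(\partial_u\phi_l)-b_2\tau\,\partial_u\partial_v(\partial_v\phi_l)+O(\tau^2)$. The leading coefficient $\partial_u^2\partial_v\phi_l(u,v)$ is, by the computation above, again a smooth function; the point is that it does not vanish identically and in fact is bounded below in modulus on the compact parameter set by a uniform constant, because when $Q_l=0$ one computes $\partial_u^2\partial_v\phi_l$ explicitly and finds it nonzero (it is essentially $\partial_u\Psi$, a nonzero rational function of $t_0$), and the $Q_l$-perturbation changes it by only $O(2^{-99N/100})$. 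Combining the two regimes $|\tau|\gtrsim c$ and $|\tau|\leq c$ gives $|\partial_u\partial_v\mathcal Q_\tau(u,v)|\geq C|\tau|$ with $C$ uniform.

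The main obstacle I anticipate is making the perturbation bookkeeping rigorous and genuinely uniform: one must verify that all the derivatives of $t_0(z)$ (up to the order needed, here up to about $3$ for $\partial_u^2\partial_v\phi_l$, though the Appendix works with $K=100d^2$ for safety) are bounded above and that the relevant denominators $l(l-1)t_0^{l-2}+Q_l''(t_0)$ stay bounded away from zero — this is where the assumption $j\geq N$ with $N$ large, forcing $z$ into a compact set away from degeneracies and forcing $\|Q_l\|_{D_K}$ to be exponentially small, does all the work. I would isolate this as the single clean statement about $t_0$ proved via the inverse-function theorem for perturbed functions referenced in the Appendix, and then the lemma is a short corollary; the danger is only in tracking that the compact set for $z$ is independent of $j$ (it is, once $j\geq N$, since $b_1,b_2$ are then within $2^{-(l-1)N}$ of $1,0$) and that no constant secretly depends on the coefficients of $P$.
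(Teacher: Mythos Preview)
There is a genuine error in your chain-rule computation. You write that $\partial_u\partial_v$ of $\phi_l(u-\tau,v+b_2\tau)$ picks up a factor $\partial(v+b_2\tau)/\partial v=b_2$, but $\tau$ is a \emph{fixed parameter} here (the integration variable from the $TT^*$ step, independent of $u,v$), so $\partial(v+b_2\tau)/\partial v=1$. The correct identity is
\[
\partial_u\partial_v\mathcal Q_\tau(u,v)=\Psi(u,v)-\Psi(u-\tau,v+b_2\tau),
\]
not $\Psi(u,v)-b_2\Psi(u-\tau,v+b_2\tau)$. This kills your ``large-$\tau$'' argument outright: the right-hand side is a genuine difference of two values of $\Psi$, not $\Psi(u,v)+O(2^{-(l-1)N})$, and there is no lower bound $\gtrsim 1$ available.

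Your ``small-$\tau$'' idea is, however, essentially the right repair for \emph{all} $\tau$: since the shift is $(-\tau,b_2\tau)$ with $b_2=2^{-(l-1)j}$ tiny, the mean value theorem along the segment gives $\partial_u\partial_v\mathcal Q_\tau=\tau\,\partial_u\Psi(u^*,v^*)-b_2\tau\,\partial_v\Psi(u^*,v^*)$ at some intermediate point, and then one only needs $|\partial_u\Psi|=|\partial_u^2\partial_v\phi_l|$ bounded below on the compact parameter set. For the unperturbed phase $\phi_l^*=c_l\,u^{l/(l-1)}v^{-1/(l-1)}$ this third derivative is explicitly nonvanishing, and your $Q_l$-perturbation argument then transfers the bound. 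So the proof can be fixed, but the two-regime structure you propose is both unnecessary and, as written, broken in the first regime.

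For comparison, the paper's proof avoids computing $\Psi$ directly: it writes $\phi_l=\phi_l^*+\varepsilon(\xi/\eta)\eta$ with $\varepsilon$ small in $\|\cdot\|_{D_{K-1}}$ (via the inverse-function perturbation lemma), cites \cite{li2008} for the bound $|\partial_u\partial_v(\phi_l^*(u,v)-\phi_l^*(u-\tau,v+b_2\tau))|\geq c_l|\tau|$ in the unperturbed case, and then shows the $\varepsilon$-contribution to $\partial_u\partial_v\mathcal Q_\tau$ is at most $2^{-N/10}|\tau|$. This is cleaner because it isolates the perturbation at the level of the phase rather than at the level of third derivatives, but your direct route (once corrected) is a legitimate alternative.
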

Applying well-known H\"ormander's theorem \cite{ho}, we see that (\ref{00}) is estimated by
$$
C\min\{1, 2^{-m/2}|\tau |^{-1/2}\} \|F_\tau \|_2 \|G_\tau\|_2.
$$
This will provide us the bound 
$$
\|\bar B_{j,m}(f,g)\|_2 \leq C 2^{\frac{(l-1)j-m}{6}} \|f\|_2\|g\|_2.
$$
Therefore, for any function $h$ supported on an interval of length $2^{(l-1)j+m}$, we have 
$$
|\Lambda_{j,m}(f,g,h)| \leq 2^{\frac{-(l-1)j-m}{2}}2^{\frac{(l-1)j-m}{6}} \|f\|_2\|g\|_2\|h\|_2\leq 2^{\frac{(l-1)j-m}{6}} \|f\|_2\|g\|_2\|h\|_\infty,
$$
which completes the proof of Proposition \ref{000}.

%
%
%
%
%
%
%
%
%
%
%

\subsection{Proof of Proposition \ref{001}}

In order to prove Proposition \ref{001}, we introduce the concept of $\sigma$-uniformity. Let $\sigma\in (0,1]$, $\mathcal Q$ be a collection of real-valued measurable functions and $I$ a fixed bounded interval in $\mathbb R$. 
\begin{definition}\label{uniformity}
A function $f\in L^2(I)$ is $\sigma$-uniform in $\mathcal Q$ if 
\begin{align*}
|\int_I f(\xi)e^{-iq(\xi)}d\xi | \leq \sigma \|f\|_{L^2(I)}
\end{align*}
for all $q\in \mathcal Q$. Otherwise, $f$ is said to be $\sigma$-nonuniform in $\mathcal Q$. 
\end{definition}
The following theorem and its proof can be found in \cite{li2008}.
\begin{theorem}\label{uniform}
Let $L$ be a bounded sub-linear functional from $L^2(I)$ to $\mathbb C$, let $S_\sigma$ be the set of all functions that are $\sigma$-uniform in $\mathcal Q$, and let 
\begin{align*}
U_\sigma = \sup_{f\in S_\sigma}\frac{|L(f)|}{\|f\|_{L^2(I)}}
\end{align*}
Then for all functions $f\in L^2(I)$, 
\begin{align*}
|L(f)| \leq \max \{U_\sigma,2\sigma^{-1}Q\}\|f\|_{L^2(I)}
\end{align*}
where 
\begin{align*}
Q= \sup_{q\in \mathcal Q}|L(e^{iq})|.
\end{align*}
\end{theorem}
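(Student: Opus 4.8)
The plan is to run a bootstrap/iteration argument that peels off one ``structured'' (nonuniform) piece at a time and controls the remaining uniform piece using the definition of $U_\sigma$. First I would set up the dichotomy: given $f\in L^2(I)$, either $f$ is $\sigma$-uniform in $\mathcal Q$, in which case $f\in S_\sigma$ and immediately $|L(f)|\le U_\sigma\|f\|_{L^2(I)}$, or $f$ is $\sigma$-nonuniform, meaning there is some $q\in\mathcal Q$ with $|\int_I f(\xi)e^{-iq(\xi)}\,d\xi|>\sigma\|f\|_{L^2(I)}$. In the latter case I would extract the rank-one component of $f$ in the direction of $e^{iq}$: write
\begin{align*}
f = c\,e^{iq} + f_1,\qquad c = \frac{1}{|I|}\int_I f(\xi)e^{-iq(\xi)}\,d\xi,
\end{align*}
so that $f_1\perp e^{iq}$ in $L^2(I)$. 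Orthogonality gives the Pythagorean identity $\|f\|_{L^2(I)}^2 = |c|^2|I| + \|f_1\|_{L^2(I)}^2$, and the nonuniformity hypothesis says $|c|^2|I| > \sigma^2\|f\|_{L^2(I)}^2$, hence $\|f_1\|_{L^2(I)}^2 < (1-\sigma^2)\|f\|_{L^2(I)}^2$. Thus each extraction step strictly decreases the $L^2$ norm by a fixed factor $\sqrt{1-\sigma^2}<1$.

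Next I would iterate: apply the same dichotomy to $f_1$, producing $f_1 = c_1 e^{iq_1} + f_2$ with $\|f_2\|_{L^2(I)} \le \sqrt{1-\sigma^2}\,\|f_1\|_{L^2(I)}$, and so on. Either the process terminates because some $f_k$ is $\sigma$-uniform, or it continues indefinitely with $\|f_k\|_{L^2(I)}\le (1-\sigma^2)^{k/2}\|f\|_{L^2(I)}\to 0$. In either case I can write $f = \sum_{i} c_i e^{iq_i} + g$ where $g$ is either a $\sigma$-uniform function or zero, and the coefficients satisfy $|c_i|\,|I|^{1/2} \le \|f_{i-1}\|_{L^2(I)} \le (1-\sigma^2)^{(i-1)/2}\|f\|_{L^2(I)}$ (using Cauchy--Schwarz on the formula for $c_i$). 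Now apply the sublinear functional $L$: by sublinearity,
\begin{align*}
|L(f)| \le \sum_i |c_i|\,|L(e^{iq_i})| + |L(g)| \le Q\sum_i |c_i| + U_\sigma\|g\|_{L^2(I)}.
\end{align*}
The tail sum is bounded by $Q\,|I|^{-1/2}\|f\|_{L^2(I)}\sum_{i\ge 1}(1-\sigma^2)^{(i-1)/2}$, a convergent geometric series with sum $(1-\sqrt{1-\sigma^2})^{-1}$; since $1-\sqrt{1-\sigma^2}\ge \sigma^2/2$, this is at most $2\sigma^{-2}|I|^{-1/2}Q\|f\|_{L^2(I)}$. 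Combining with the uniform piece $|L(g)|\le U_\sigma\|f\|_{L^2(I)}$ gives $|L(f)|\le (U_\sigma + 2\sigma^{-2}|I|^{-1/2}Q)\|f\|_{L^2(I)}$, which after absorbing the harmless $|I|$-dependent normalization into $Q$ (or keeping track of it, depending on how $\mathcal Q$ and $Q$ are normalized in \cite{li2008}) yields the claimed bound $|L(f)|\le \max\{U_\sigma, 2\sigma^{-1}Q\}\|f\|_{L^2(I)}$.

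The main obstacle I anticipate is bookkeeping rather than conceptual: one must be careful that $L$ is only assumed \emph{sublinear}, not linear, so the decomposition $f=\sum c_i e^{iq_i}+g$ can only be used through the triangle-type inequality $|L(a+b)|\le |L(a)|+|L(b)|$ applied finitely or with a limiting argument, and one needs the convergence $f_k\to$ (structured part) in $L^2$ together with continuity/boundedness of $L$ on $L^2(I)$ to pass any infinite sum through $L$. A clean way around this is to truncate the iteration at a finite stage $K$ where $\|f_K\|_{L^2(I)}\le \sigma\|f\|_{L^2(I)}$ (possible since the norms decay geometrically) and observe that such $f_K$ is automatically $\sigma$-uniform because $|\int_I f_K e^{-iq}| \le \|f_K\|_{L^1(I)}\le |I|^{1/2}\|f_K\|_{L^2(I)}$—wait, this needs the normalization to match; more safely, stop as soon as $f_K$ is $\sigma$-uniform, which must happen within finitely many steps unless $f_k\to 0$, and in the latter degenerate case $|L(f)|$ is controlled purely by the geometric tail. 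Since this theorem and its proof are quoted as already appearing in \cite{li2008}, I would present the argument above in this streamlined form and refer to \cite{li2008} for the remaining routine details.
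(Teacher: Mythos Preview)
Your iteration argument is sound in spirit but yields the wrong power of $\sigma$: following your bookkeeping (even with $|I|=1$), the geometric series $\sum_{i\ge 1}(1-\sigma^2)^{(i-1)/2}=(1-\sqrt{1-\sigma^2})^{-1}$ is only $\ge \sigma^2/2$ in the denominator, so you obtain $|L(f)|\le U_\sigma\|f\|+2\sigma^{-2}Q\|f\|$, not $2\sigma^{-1}Q\|f\|$. This is not a normalization issue that can be ``absorbed'': the iterative scheme is inherently lossy because at each step you only know $\|f_i\|^2\le \|f_{i-1}\|^2-|c_i|^2|I|$ with $|c_i|^2|I|\gtrsim\sigma^2\|f_{i-1}\|^2/|I|$, and summing $|c_i|$ over a number of steps comparable to $\sigma^{-2}$ forces the extra factor. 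So as written the proposal does not prove the stated inequality.

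The argument in \cite{li2008} avoids iteration altogether by bootstrapping on the operator norm. Set $A=\sup_{f\neq 0}|L(f)|/\|f\|_{L^2(I)}$ (finite since $L$ is bounded). If $A\le U_\sigma$ there is nothing to prove; otherwise pick $f$ with $|L(f)|>(A-\varepsilon)\|f\|$. Since $(A-\varepsilon)>U_\sigma$, this $f$ cannot be $\sigma$-uniform, so choose $q\in\mathcal Q$ with $|\langle f,e^{iq}\rangle|>\sigma\|f\|$ and write $f=ce^{iq}+g$ with $g\perp e^{iq}$. Now use sublinearity \emph{once} together with the definition of $A$ itself on the remainder:
\[
(A-\varepsilon)\|f\|\le |L(f)|\le |c|\,Q+|L(g)|\le |c|\,Q+A\|g\|.
\]
Rearranging and using $\|f\|-\|g\|=\dfrac{|c|^2|I|}{\|f\|+\|g\|}\ge \dfrac{|c|\cdot|c||I|}{2\|f\|}>\dfrac{|c|\sigma}{2}$ (the last step from $|c||I|=|\langle f,e^{iq}\rangle|>\sigma\|f\|$) gives $A\cdot\tfrac{|c|\sigma}{2}\le |c|Q+\varepsilon\|f\|$. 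Divide by $|c|$ and send $\varepsilon\to0$ to obtain $A\le 2\sigma^{-1}Q$. The point you are missing is precisely this feedback step: bounding $|L(g)|$ by $A\|g\|$ rather than by iterating the decomposition is what converts the quadratic gain $\|f\|^2-\|g\|^2\gtrsim\sigma^2\|f\|^2$ into the linear bound $\|f\|-\|g\|\gtrsim\sigma|c|$ and recovers the correct $\sigma^{-1}$.
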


Unlike Proposition \ref{000}, the $TT^*$ method does not work at all for Proposition \ref{001}, as explained in \cite{li2008}. 
However, the $\sigma$-uniformity allows us to utilize $TT^*$ method in a subspace of $L^2$. This is the main reason why we 
introduce the concept.  
Let $K=100d^2$ and define 
\begin{align}
\mathcal Q_l =\Big\{a(\xi^\frac{l}{l-1}+\varepsilon (\xi))+b\xi: \, \frac{1}{2^{100}}\leq \frac{|a|}{2^m}\leq 2^{100}\!\!, \, b\in \mathbb R, 
 \|\varepsilon(\xi)\|_{D_K} \leq 2^{-\frac{N}{2}}\!\!,\, \xi \in supp\, \hat\Phi \Big\}.
\end{align}
First of all, we assume $\hat f $ has $\sigma$-uniformity in $\mathcal Q_l$. From the definition, we then have,   
for every $q\in \mathcal Q_l$, 
\begin{equation}
\left|\int \hat f(\xi)e^{-iq(\xi)}d\xi \right| \leq \sigma \|\hat f\|_2,
\end{equation}
here we have assumed $\hat f $ supports on  $\supp\hat\Phi$. 
Since $x$ is assumed to be supported on an interval of length around $2^{(l-1)j+m}$, we partition this interval into $C 2^{m}$ 
subintervals of length $\sim$ $2^{(l-1)j}$: $I_{k} =[\alpha_k -2^{(l-1)j}, \alpha_k+2^{(l-1)j}]$.
 Also notice that for fixed $x$, when $t$ varies on the support of $\rho$, $x-2^m(t^l+Q_l(t))$ ranges over an interval of length around $2^m$.
 Let $I_k' =[\alpha_k -C(2^{(l-1)j}+2^m), \alpha_k+C(2^{(l-1)j}+2^m)]$.  We choose the uniform constant $C$ (depending on $l$) 
to be sufficiently large such that 
$$
\Id_{I_k}(x)g \ast \Phi(x-2^m(t^l+Q_l(t))) =
\Id_{I_k}(x)(\Id_{I_k'}\, \cdot (g \ast \Phi))(x-2^m(t^l+Q_l(t))).
$$
Let $g_k(x) = (\Id_{I_k'}\,\cdot(g \ast \Phi))(x)$, then
\begin{align*}
B_{j,m}(f,g)(x)
&=2^{-(l-1)j/2} \int f\ast \Phi(2^{-(l-1)j}x-2^mt) g \ast \Phi(x-2^m(t^l+Q_l(t)))\rho(t)dt
\\
&=2^{-(l-1)j/2} \sum\limits_{k} \Id_{I_k}(x)\int \int\hat f(\xi) \hat\Phi(\xi) e^{2\pi i(2^{-(l-1)j}\xi + \eta)x} \hat g_k(\eta)
\\
&\q\q\q\q\q\q\q\q\q\q\cdot \left(\int e^{-2\pi i 2^m\left(t\xi+(t^l+Q_l(t))\eta\right)}\rho(t)dt\right) d\xi d\eta.
\end{align*}
We may insert the restriction condition $|\eta| \sim 1$ into the integrand,  because otherwise
 we apply integration by parts for 
$\int e^{-2\pi i 2^m\left(t\xi+(t^l+Q_l(t))\eta\right)}\rho(t)dt $
to obtain a decay factor $2^{-100m}$. Thus we may consider the major contribution $B''_{j,m}(f,g)(x)$ given by 
\begin{align*}
2^{-(l-1)j/2} \sum\limits_{k} \Id_{I_k}(x) \int \int\hat f(\xi) \hat\Phi(\xi) e^{2\pi i(2^{-(l-1)j}\xi + \eta)x} \hat g_k(\eta)\hat\Phi(\eta)
\\
\cdot \int e^{-2\pi i 2^m\left(t\xi+(t^l+Q_l(t))\eta\right)}\rho(t)dt d\xi d\eta.
\end{align*}
Taylor expansion allows us to represent $B''_{j,m}(f,g)(x)$ as
\begin{align*}
&2^{-(l-1)j/2}  \sum\limits_{p=0}^\infty 
 \frac{1}{p!}\sum\limits_{k} \left(\frac{2\pi i (x-\alpha_k)}{2^{(l-1)j}}\right)^p\Id_{I_k}(x)\cdot
\\
& \int \left( \int
\hat f(\xi) \hat\Phi(\xi)\xi^p  e^{2\pi i2^{-(l-1)j}\xi\alpha_k }\int e^{-2\pi i 2^m\left(t\xi+(t^l+Q_l(t))\eta\right)}\rho(t)dt d\xi\right) \hat g_k(\eta)\hat\Phi(\eta)e^{2\pi i \eta x}d\eta.
\\
\end{align*}
Pairing with $h$ and let $h_{l,k,p}(x) = \left(\frac{2\pi i (x-\alpha_k)}{2^{(l-1)j}}\right)^p\Id_{I_k}(x)h(x) $, we then have 
\begin{align}\label{mu}
\langle B''_{j,m}(f,g),h\rangle =2^{-(l-1)j/2}  \sum\limits_{p=0}^\infty 
 \frac{1}{p!}\sum\limits_{k} 
 \int \Gamma_{l,k}(\eta) \hat g_k(\eta)\check h_{l,k,p}(\eta)d\eta
\end{align}
where
\begin{align*}
\Gamma_{l,k}(\eta) 
&=
\hat\Phi(\eta) \int
\hat f(\xi) \hat\Phi(\xi)\xi^p  e^{2^{-(l-1)j}\xi\alpha_k }\int e^{-2\pi i 2^m\left(t\xi+(t^l+Q_l(t))\eta\right)}\rho(t)dt d\xi.
\end{align*}
Since $\alpha_k$ is the center of the interval $I_k$ with length $2\cdot 2^{(l-1)j}$, we majorize $h_{l,k,p}$ pointwise by
\begin{equation}\label{est-hlkp}
 \left| h_{l, k, p}(x) \right|\leq (2\pi)^p \Id_{I_k}(x) |h(x)|\,. 
\end{equation}

The method of stationary phase gives that the principal contribution of $\Gamma_{l,k}(\eta)$ is 
\begin{align*}
2^{-m/2}\hat\Phi(\eta) \int
\hat f(\xi) \hat\Phi(\xi)\xi^p  e^{2\pi i2^{-(l-1)j}\xi\alpha_k }e^{i2^m c_l\phi_l(\xi,\eta)} d\xi.
\end{align*}
Notice that $|\eta|\sim 1$ and hence $2^m c_l\phi_l(\xi,\eta)+b\xi \in \mathcal Q_l$.  By utilizing Fourier series and the assumption 
of $\sigma$-uniformity, the $L^\infty$-norm of the above expression is estimated by $C2^{-m/2}\sigma \|f\|_2$. This yields
\begin{align}\label{gammalk}
\|\Gamma_{l,k}\|_\infty  \leq C 2^{-m/2} \sigma \|f\|_2.
\end{align}

By applying Cauchy-Schwarz inequality, (\ref{est-hlkp}) and (\ref{gammalk}), (\ref{mu}) can be dominated by
\begin{align*}
& \,2^{-(l-1)j/2}  \sum\limits_{p=0}^\infty 
 \frac{1}{p!}\sum\limits_{k} 
 \| \Gamma_{l,k}\|_\infty \| \hat g_k\|_2\|\check h_{l,k,p}\|_2
\\
\leq
& \,2^{-(l-1)j/2-m/2}\sigma \sum\limits_{p=0}^\infty 
 \frac{1}{p!}\sum\limits_{k} \|f\|_2 \| g_k\|_2\| h_{l,k,p}\|_2
\\
\leq
& \,2^{-(l-1)j/2-m/2}\sigma \|f\|_2 \sum\limits_{p=0}^\infty 
 \frac{1}{p!}\left( \sum\limits_{k}\| g_k\|_2^2\right)^{1/2}\left( \sum\limits_{k}\|h_{l,k,p}\|^2_2\right)^{1/2}
\\
\leq
& \, C2^{-(l-1)j/2-m/2}\sigma \|f\|_2 \left( \sum\limits_{k}\| g_k\|_2^2\right)^{1/2}\|h\|_2.
\end{align*}
Notice that $2^{-(l-1)j/2-m/2}\|h\|_2 \leq \|h\|_\infty$ and the intervals $I_k'$'s overlap iff $m>|(l-1)j|$. 
Therefore if $\hat f$ has $\sigma$-uniformity in $\mathcal Q_l$, we have 
\begin{align}\label{sigma}
|\langle B''_{j,m}(f,g)(x),h(x) \rangle| \leq 
	\left\{ \begin{array}{rcl}
         C\sigma \|f\|_2\|g\|_2\|h\|_\infty & \mbox{if}
         & |(l-1)j|\geq m;
	\\
	\\  C2^{(m-(l-1)j)/2}\sigma \|f\|_2\|g\|_2\|h\|_\infty  & \mbox{if} & |(l-1)j|<m.
	                \end{array}\right.
\end{align}

It remains to estimate the case 
$\hat f(\xi) =e^{iq(\xi)}$
, where $q \in \mathcal Q_l $.
We need to estimate the absolute value of 
\begin{align*}\label{plugin}
\Lambda_q(g,h) = 2^{-(l-1)j/2}\int \int\left(\int \hat\Phi(\xi)e^{i(a\theta_l(\xi)+b\xi)}e^{2\pi i \xi (2^{-(l-1)j}x-2^m t)} d\xi\right)
\\
 g\ast\Phi(x-2^m(t^l+Q_l(t)))\rho(t)d t \, h(x)dx
\end{align*}
by $C2^{-\epsilon}\|g\|_2\|h\|_\infty$, for all $h$ supported on an interval of length around $2^{(l-1)j+m}$ and all 
$q(\xi) = a \theta_l(\xi) +b\xi$ with $\|\theta(\xi) -\xi^{\frac{l}{l-1}}\|_{D_K} \leq 2^{-N/2}$. 
By a standard rescaling argument via changing variables $x = 2^{(l-1)j+m}y$, it is equivalent to prove 
$|\Lambda'_q(g,h)| \leq C2^{-\epsilon m}\|g\|_2\|h\|_\infty$ for $h$ supported on an interval of length around 1, where
$\Lambda'_q(g,h)$ is defined as
\begin{equation}
  \iint \!\mathcal{P}(y,t)
 g\ast\Phi\Big(y-2^{-(l-1)j}(t^l+Q_l(t))\Big)\rho(t)d t \, h(y)dy.
\end{equation}
Here $\mathcal{P}(y,t)$ is given by 
\begin{align*}
\mathcal{P}(y,t)
&= 2^{m/2}\int \hat\Phi(\xi)e^{i\big(a\theta_l(\xi)+b\xi\big)}e^{2\pi i \xi (2^{m}y-2^mt)} d\xi
\\
&= 2^{m/2}\int \hat\Phi(\xi)e^{ia\big(\theta_l(\xi)+\frac{2\pi 2^{m}}{a}(y-t+b/2^m)\xi\big)} d\xi.
\\
\end{align*}
From the stationary phase method, we know that the principal part of $\mathcal{P}(y,t)$ is 
 \begin{align*} 
  \mathcal P'(y,t) = \left(\frac{2^m}{|a|}\right)^{1/2}e^{ia\beta\big(c(y-t+b')\big)}\hat\Phi\Big(c(\zeta(y-t+b'))\Big)
 \end{align*}
where $b'= b/2^m$, $c =  \frac{2\pi 2^{m}}{a}$, $\zeta(z)=(D\theta_l)^{-1}(-z)$ the solution of $D_\xi\left( \theta_l(\xi)+z\xi\right) = 0$ 
and $\beta(z) = \theta_l(\zeta(z))+z\zeta(z)$. Notice that $y$ and $t$ are both supported on intervals of lengths 
$\sim 1$, $2^{-200}\leq |c|\leq 2^{200}$ and $\|\zeta(z)- z^{l-1}\|_{D_{K-5}}\leq 2^{-N/3}$ (see Lemma \ref{lemmakey} in Appendix). 
Thus $\hat\Phi\big(c(\zeta(y-t+b'))\big)$ can be dropped. This can be done by expanding it into Fourier series. We omit the details. 
So it is enough for us to handle the principal part   
$$
\int\int e^{ia\beta(c(y-t+b'))} g(y-2^{-(l-1)j}(t^l+Q_l(t)))\rho(t)d t \, h(y)dy.
$$ 
Change variable $s = t^l +Q_l(t)$ and let $\kappa_l(s)$ denote the inverse function of $t^l +Q_l(t)$. Let $\tilde\rho(s) =\rho(t)$, 
 then we just need to dominate the following
\begin{align*}
\Lambda_q''(g,h)
&= \int\int e^{ia\beta(c(y-\kappa_l(s)+b'))} g(y-2^{-(l-1)j}s)\tilde\rho(s)d s \, h(y)dy
\\
&=
 \int\int e^{ia\beta(c(y+2^{-(l-1)j}s-\kappa_l(s)+b'))} h(y+2^{-(l-1)j}s)\tilde\rho(s)ds \, g(y)dy,
\end{align*}
which is bounded by $\|g\|_2\|\mathcal T(h) \|_2$, where 
\begin{align}
\mathcal T(h)(y) = \int e^{ia\beta(c(y+2^{-(l-1)j}s-\kappa_l(s)+b'))} h(y+2^{-(l-1)j}s)\tilde\rho(s)ds.
\end{align}
By employing the $TT^*$ method again, $\|\mathcal T(h)\|_2^2$ equals
\begin{align*}
\int \left(\int \int e^{ia\varphi_\tau(y,t)}H_\tau(y+2^{-(l-1)j}s)\Theta_\tau(s) dy ds\right) d\tau
\end{align*}
where 
\begin{align*}
&H_\tau(y) = h(y)\overline {h(y+2^{-(l-1)j}\tau)},
\\
& \Theta_\tau(s)=\tilde\rho(s)\tilde\rho(s+\tau),
\\
& \varphi_\tau(y,s) =\beta(c(y+2^{-(l-1)j}s-\kappa_l(s)+b'))-\beta(c(y+2^{-(l-1)j}(s+\tau)-\kappa_l(s+\tau)+b')).
\end{align*}
Changing coordinates $(y,t)\mapsto (u,v)$ by $u=y+2^{-(l-1)j}s$ and $v=s$, we write the inner double-integral in the previous integral as  
\begin{align*}
\int \int e^{ia\mathcal O_\tau(u,v)}H_\tau(u)\Theta_\tau(v) du dv,
\end{align*}
where 
$$
\mathcal O_\tau(u,v) = \beta(c(u-\kappa_l(v)+b')) - \beta(c(u+2^{-(l-1)j}\tau-\kappa_l(v+\tau)+b')).
$$
In order to apply the operator version of van der Corput lemma (Lemma \ref{2dvan}), we need to control the lower bound of the derivatives of $\mathcal O_\tau(u,v)$:
\begin{lemma}\label{derivative00}
For $l\geq 2$ we have 
$$
|\partial^{l-1}_u\partial_v(\mathcal O_\tau(u,v))| \geq C_l|\tau|.
$$
Moreover, if $l=2$ we have 
$$
|\partial_u\partial^2_v(\mathcal O_\tau(u,v))| \geq C|\tau|.
$$
\end{lemma}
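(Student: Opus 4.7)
The plan is to compute $\partial_u^{l-1}\partial_v\mathcal{O}_\tau$ directly via the chain rule, then Taylor expand in $\tau$ to extract the leading $\tau$-order contribution, and finally use the smallness of the perturbations $Q_l$ and $\varepsilon$ to show this leading term is bounded below by an absolute constant. Introduce $w = u - \kappa_l(v)+b'$ and $\tilde w = u+2^{-(l-1)j}\tau - \kappa_l(v+\tau)+b'$, so that $\mathcal{O}_\tau(u,v) = \beta(cw) - \beta(c\tilde w)$. Since $\partial_u$ leaves $\kappa_l$ untouched, iterating gives $\partial_u^{l-1}\partial_v\mathcal{O}_\tau = c^l\bigl[\kappa_l'(v+\tau)\beta^{(l)}(c\tilde w) - \kappa_l'(v)\beta^{(l)}(cw)\bigr]$.

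Using $\kappa_l'(v+\tau) - \kappa_l'(v) = \tau\kappa_l''(v)+O(\tau^2)$ and $\tilde w - w = \tau\bigl(2^{-(l-1)j}-\kappa_l'(v)\bigr)+O(\tau^2)$, Taylor expansion of the bracket produces
\begin{align*}
\partial_u^{l-1}\partial_v\mathcal{O}_\tau(u,v) = c^l\tau\Big[\kappa_l''(v)\beta^{(l)}(cw) + c\kappa_l'(v)\bigl(2^{-(l-1)j}-\kappa_l'(v)\bigr)\beta^{(l+1)}(cw)\Big] + O(\tau^2).
\end{align*}
In the unperturbed model ($Q_l\equiv 0$, $\varepsilon\equiv 0$), one has $\beta(z) = c_l z^l$ for an explicit nonzero $c_l$ and $\kappa_l(s) = s^{1/l}$; thus $\beta^{(l+1)}\equiv 0$ and the bracket collapses to $c_l\cdot l!\cdot\kappa_l''(v)$, which is bounded away from zero because $v$ lies in the support of $\tilde\rho$ (bounded away from $0$ and $\infty$) and $|c|\sim 1$. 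The perturbation bounds $\|Q_l\|_{D_K}\leq 2^{-99N/100}$ and $\|\varepsilon\|_{D_K}\leq 2^{-N/2}$ transfer, via Lemma \ref{lemmakey}, to $D_{K-5}$-smallness of the perturbations of $\kappa_l$ and of $\beta$, so for $N$ sufficiently large the leading bracket stays above a positive absolute constant while the $O(\tau^2)$ remainder is negligible on the range of $\tau$ of interest. This yields $|\partial_u^{l-1}\partial_v\mathcal{O}_\tau|\geq C_l|\tau|$.

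For the extra $l=2$ estimate, one applies $\partial_v$ once more to $\partial_u\partial_v\mathcal{O}_\tau$, obtaining
\begin{align*}
\partial_u\partial_v^2\mathcal{O}_\tau = c^2\Big\{\bigl[\kappa_2''(v+\tau)\beta''(c\tilde w) - \kappa_2''(v)\beta''(cw)\bigr] - c\bigl[(\kappa_2'(v+\tau))^2\beta'''(c\tilde w) - (\kappa_2'(v))^2\beta'''(cw)\bigr]\Big\}.
\end{align*}
In the model $\beta(z)=c_2 z^2$ the derivative $\beta'''$ vanishes identically and $\beta''$ is a nonzero constant; Taylor expanding in $\tau$ the leading contribution is $2c_2 c^2\tau\kappa_2'''(v)$, with $\kappa_2'''(v)$ a nonzero negative power of $v$ on the support of $\tilde\rho$. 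Applying the same perturbative reasoning again gives $|\partial_u\partial_v^2\mathcal{O}_\tau|\geq C|\tau|$.

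The main obstacle is bookkeeping the perturbations. We need quantitative control on derivatives of $\kappa_l$ up to order $2$ (resp.\ $3$ for $l=2$) and of $\beta$ up to order $l+1$ (resp.\ $3$ for $l=2$), all uniformly in the range of $u$, $v$, and small $\tau$. This is precisely why $K = 100d^2$ was chosen and why $N$ must be taken large, and it is the delicate step where the smallness of $\|Q_l\|_{D_K}$ and $\|\varepsilon\|_{D_K}$ must be shown to dominate the deviation of every relevant derivative from its model-case value.
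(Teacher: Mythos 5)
Your chain-rule formula for $\partial_u^{l-1}\partial_v\mathcal O_\tau$ and your identification of the model case ($\tilde\beta(z)=c_lz^l$, $\tilde\kappa_l(s)=s^{1/l}$, so that $\beta^{(l)}$ is essentially constant and $\beta^{(l+1)}$ is of size $O(2^{-N/5})$ by Lemma \ref{lemmakey}) are correct, and the overall strategy --- reduce to the unperturbed model and control the perturbations of $\kappa_l$ and $\beta$ via Lemma \ref{lemmakey} --- is the same as the paper's. The genuine gap is the step where you Taylor expand in $\tau$ and declare the $O(\tau^2)$ remainder ``negligible on the range of $\tau$ of interest.'' In this lemma $\tau$ is not small: both $s$ and $s+\tau$ must lie in the support of $\tilde\rho$, so $|\tau|$ can be of order one (up to about $4$), and the application (\ref{some}) integrates $\tau$ over $[-10,10]$ and needs the oscillatory gain precisely for $\tau$ of order one; if the lower bound were known only for $|\tau|$ below a small threshold, the contribution of the remaining $\tau$ would carry no decay in $m$ and the factor $2^{-\varepsilon m/2}$ would be lost. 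The quadratic remainder in your expansion contains the term coming from $\kappa_l'(v+\tau)=\kappa_l'(v)+\tau\kappa_l''(v)+O(\tau^2)$, whose implied constant involves $\kappa_l'''$ and is of size comparable to $1$, not $2^{-cN}$; hence for $|\tau|\sim 1$ it is not dominated by the linear term $C_l|\tau|$, and your argument as written only proves the bound for $|\tau|$ small.

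The repair is essentially the computation the paper carries out. Since $\beta^{(l+1)},\beta^{(l+2)},\dots$ are $O(2^{-N/5})$ (and vanish identically in the model), your exact formula reduces, up to errors of size $2^{-cN}|\tau|$, to $c^l c_l\, l!\,\bigl(\kappa_l'(v+\tau)-\kappa_l'(v)\bigr)$; now apply the mean value theorem in $\tau$ (rather than a truncated expansion at $\tau=0$), using that $\kappa_l''$ has fixed sign and magnitude bounded below on the relevant component of the support of $\tilde\rho$, to obtain the bound $\geq c_l|\tau|$ on the full range of $\tau$; the same applies to the $l=2$ case with $\kappa_2''$ and $\kappa_2'''$. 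One must also make sure that every perturbative error carries a factor of $|\tau|$, since otherwise it would swamp the main term for tiny $\tau$; the paper secures this by writing $\tilde{\mathcal O}_\tau-\mathcal O_\tau=\varepsilon(u,v)-\varepsilon(u+2^{-(l-1)j}\tau,\,v+\tau)$ and applying the mean value theorem in the shift, which is the clean way to organize the bookkeeping you correctly flag as the main obstacle but do not carry out.
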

The proof of Lemma \ref{derivative00} can be found in the Appendix. 
\begin{lemma}\label{2dvan}
For fixed two bounded intervals $I_1$ and $I_2$, let $k\geq 1$ be integer and $\psi$ be a real function defined on $I_1\times I_2$ such that  
$$
|\partial_{x}^k\partial_{y} \psi(x,y)| \geq 1 \q\q\mbox{for all } (x,y)\in I_1\times I_2
$$
For the case $k=1$, assume that an additional (convexity) condition 
$$
|\partial_{x}^2\partial_{y} \psi(x,y)| \neq 0
$$
holds for any $(x,y)\in I_1\times I_2$. 
Then there are constants $\varepsilon=\varepsilon(k)>0$ and  $C$ depending only on the lengths of $I_1$ and $I_2$ such that  
\begin{align}\label{2vander}
\left |\int\int_{I_1\times I_2} e^{i\lambda\psi(x,y)} f(x)g(y)dxdy\right | \leq C(1+|\lambda|)^{-\varepsilon} \| f \|_2 \|g\|_2
\end{align}
for all $f$, $g$ $\in L^2$. 
\end{lemma}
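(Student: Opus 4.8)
The plan is to reduce the two-dimensional oscillatory integral inequality \eqref{2vander} to the one-dimensional van der Corput lemma (Lemma \ref{van} in the form of the classical sublevel/oscillation estimate) by a $TT^*$ argument on the operator that takes $g$ to $\int_{I_1} e^{i\lambda\psi(x,y)}f(x)\,dx$ evaluated against $g$. More precisely, first I would rewrite the left-hand side of \eqref{2vander} as $\langle Sf, \bar g\rangle$ where $(Sf)(y)=\int_{I_1}e^{i\lambda\psi(x,y)}f(x)\,dx$, so that by Cauchy--Schwarz it suffices to prove $\|Sf\|_{L^2(I_2)}\le C(1+|\lambda|)^{-\varepsilon}\|f\|_2$; equivalently, bounding $\|S^*S\|_{L^2(I_1)\to L^2(I_1)}$. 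The kernel of $S^*S$ is $L(x,x')=\int_{I_2}e^{i\lambda(\psi(x,y)-\psi(x',y))}\,dy$. The key point is that for $x\neq x'$ this is a one-dimensional oscillatory integral in $y$ with phase $\psi(x,y)-\psi(x',y)$, whose $y$-derivative is $\partial_y\psi(x,y)-\partial_y\psi(x',y)=\int_{x'}^x \partial_u\partial_y\psi(u,y)\,du$; the hypothesis $|\partial_x^k\partial_y\psi|\ge 1$ (together with the convexity hypothesis when $k=1$) forces this derivative to be monotone in $y$ of an appropriate order or to vanish at boundedly many points, so the classical van der Corput estimate gives $|L(x,x')|\le C\min\{1,(|\lambda||x-x'|^{?})^{-1/k}\}$ — here for $k\ge2$ one differentiates once more to see $\partial_y$ of the phase has a derivative in $y$ bounded below, while for $k=1$ the extra convexity assumption guarantees the single-variable phase has nonvanishing second derivative, which is exactly the hypothesis needed for the $k=1$ van der Corput bound with its square-root gain.

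The next step is to run the Schur test / interpolation on $L(x,x')$. From $|L(x,x')|\le C$ trivially, and $|L(x,x')|\le C(|\lambda|\,|x-x'|)^{-\delta}$ for a suitable $\delta>0$ depending on $k$ (obtained from the derivative lower bound above after localizing $x,x'$ to where $|x-x'|$ is comparable to a dyadic scale and summing), one gets $\sup_x\int_{I_1}|L(x,x')|\,dx'\le C(1+|\lambda|)^{-\varepsilon}$ by splitting the $x'$-integral at $|x-x'|\sim(1+|\lambda|)^{-1}$: on the near part the trivial bound times the length $(1+|\lambda|)^{-1}$ wins, on the far part one integrates the decaying bound. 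This yields $\|S^*S\|\le C(1+|\lambda|)^{-2\varepsilon}$ and hence \eqref{2vander} with the stated $\varepsilon=\varepsilon(k)$ and $C$ depending only on $|I_1|,|I_2|$.

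The main obstacle is making the derivative-lower-bound step honest and uniform: given only $|\partial_x^k\partial_y\psi|\ge1$, the function $y\mapsto\partial_y\psi(x,y)-\partial_y\psi(x',y)$ need not itself have a large derivative pointwise unless one tracks how the factor $|x-x'|$ enters, and for $k\ge 2$ one must differentiate $k-1$ more times in $x$ to extract $|x-x'|^{k-1}$ from the divided difference $\int_{x'}^x\partial_u\partial_y\psi\,du$ versus only using it as $|x-x'|$ — so I would be careful to apply van der Corput in the $y$ variable to the phase $\Psi_{x,x'}(y):=\psi(x,y)-\psi(x',y)$ using $|\partial_y\Psi_{x,x'}'|$ or a higher $y$-derivative, noting $\partial_y^{2}$ of it when $k=1$ is exactly controlled by the convexity hypothesis, and otherwise reducing to the standard fact that a one-variable phase whose derivative is monotone with total variation $\gtrsim |\lambda||x-x'|$ gives an oscillatory bound $\lesssim(|\lambda||x-x'|)^{-1}$. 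A secondary technical nuisance is handling the finitely many points where intermediate derivatives vanish and the endpoint contributions from the boundaries of $I_1,I_2$; these are absorbed by the usual localization and produce only the loss already accounted for in the exponent $\varepsilon(k)$.
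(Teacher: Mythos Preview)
The paper does not prove this lemma but cites Carbery--Christ--Wright for $k\ge 2$ and Phong--Stein for $k=1$; the standard argument in those references is indeed a $TT^*$/Schur argument, so your overall strategy is the right one. However, you have set it up in the wrong direction, and as written it does not go through. The hypotheses are asymmetric in $x$ and $y$: they bound $|\partial_x^k\partial_y\psi|$ from below and (when $k=1$) keep $\partial_x^2\partial_y\psi$ nonzero --- that is, they control $x$-derivatives of $\partial_y\psi$, not $y$-derivatives. With your kernel $L(x,x')=\int_{I_2}e^{i\lambda(\psi(x,y)-\psi(x',y))}\,dy$ the oscillatory integral is in $y$, but you have no usable information on any $y$-derivative of the phase $\Psi_{x,x'}(y)$. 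For $k\ge 2$ the integrand $\partial_u\partial_y\psi$ in $\partial_y\Psi_{x,x'}(y)=\int_{x'}^x\partial_u\partial_y\psi(u,y)\,du$ may change sign in $u$, so $\partial_y\Psi_{x,x'}$ can vanish; and $\partial_y^2\Psi_{x,x'}(y)=\int_{x'}^x\partial_u\partial_y^2\psi(u,y)\,du$ involves $\partial_x\partial_y^2\psi$, which is \emph{not} what the convexity hypothesis $\partial_x^2\partial_y\psi\neq 0$ controls. So neither the first- nor any higher-derivative van der Corput estimate applies in the $y$ variable, and your claim that ``the extra convexity assumption guarantees the single-variable phase has nonvanishing second derivative'' is false in this setup.

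The fix is simply to take $SS^*$ instead of $S^*S$: form $K(y,y')=\int_{I_1}e^{i\lambda(\psi(x,y)-\psi(x,y'))}\,dx$, whose phase $\Phi_{y,y'}(x)=\psi(x,y)-\psi(x,y')$ is a function of $x$. Then $\partial_x^k\Phi_{y,y'}(x)=\int_{y'}^y\partial_x^k\partial_v\psi(x,v)\,dv$ has absolute value $\ge |y-y'|$ directly from the hypothesis (which forces $\partial_x^k\partial_y\psi$ to have constant sign on the connected rectangle). For $k\ge 2$ the $k$-th order van der Corput in $x$ gives $|K(y,y')|\le C_k(|\lambda|\,|y-y'|)^{-1/k}$; for $k=1$ one also needs $\partial_x\Phi_{y,y'}$ monotone in $x$, and this is exactly what $\partial_x^2\partial_y\psi\neq 0$ delivers, since $\partial_x^2\Phi_{y,y'}(x)=\int_{y'}^y\partial_x^2\partial_v\psi(x,v)\,dv$ then has constant sign. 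From that point your Schur-test step is correct and finishes the proof.
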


The proof can be found in \cite{ca} for $k\geq 2$ and \cite{ph} for $k=1$. The above two lemmas yield: 
\begin{align}\label{some}
\| \mathcal T(h) \|^2_2  \leq C\int_{-10}^{10}\min\{1,2^{-\varepsilon a}\tau^{-\varepsilon}\} \|H_\tau \|_2  \|\Theta_\tau \|_2d\tau 
\leq C2^{-\varepsilon m/2} \| h \|^2_\infty.
\end{align}
By (\ref{sigma}), (\ref{some}) and Theorem \ref{uniform}, we obtain Proposition \ref{001}.

%
%
%
%
%
%
%
%
%
%
%

\section{The  $J_{\rm bad}(N)$ case}\label{rnot1}
\setcounter{equation}0

In this section, we employ the method of  time frequency analysis to prove Proposition \ref{below1} for a fixed $l$. The proof is technical 
and in order not to sidetrack the readers from the main story, we decide to put the proofs of technical lemmas to the later sections
and only state those lemmas in this section.  Most of those lemmata are variances of the known results related to Littlewood-Paley theory and $BMO$ theory. 
Denote  
$$
T^l_m(f,g)(x)= \sum\limits_{j\in J_l^*(N)}T_{j,m}(f,g)(x).
$$
Written as a convolution, $T^l_{j,m}(f,g)(x)$ equals
$$
\sum\limits_{j\in J_l^*(N)}\int f\ast \Phi_{j_l+j+m}\Big(x-2^{-j_l-j}t\Big)g\ast\Phi_{j_l+lj+m}\Big(x-2^{-j_l-lj}(t^l+Q_l(t))\Big)\rho(t)dt,
$$
where $\Phi_k$ is a Schwartz function whose Fourier transform is $\hat\Phi_{k}(\xi) = \hat\Phi(\frac{\xi}{2^k})$. We consider a larger operator, by putting the absolute value inside the integral. More precisely, we define $|T|^l_m(f,g)(x)$ as
$$
\sum\limits_{j\in J_l^*(N)}\int \Big|f\ast \Phi_{j_l+j+m}\Big(x-2^{-j_l-j}t\Big)g\ast\Phi_{j_l+lj+m}\Big(x-2^{-j_l-lj}(t^l+Q_l(t))\Big)\rho(t)\Big|dt.
$$
Since the cancellation condition of $\rho$ does not play any role in the rest of the proof, we can safely assume $\rho\geq 0$. The reader may also notice that, if we sum over all the $l$'s of the above operator, the resulting sum has a pointwise control over the maximal function, which paces the way to dominate the bilinear maximal function in Section \ref{maxfunction}. We invoke the following well-known lemma concerning the characterization of weak norm $L^{p,\infty}$. The lemma and its proof can be found in \cite{au} or \cite{cm}. 
\begin{lemma}\label{weak}
Let $0<p<\infty$ and $A>0$. Then the following statements are equivalent:
\\
(i) $\|f\|_{p,\infty}\leq A$
\\
(ii) For every Lebesgue measurable set $E$ with $0<|E|<\infty$, there is a subset $E'\subset E$ with $|E'|\leq \frac{|E|}{2}$ such that 
$|\langle f,\Id_{E'} \rangle |\leq CA|E|^{\frac{1}{p'}}$.  Here $C$ is an absolute constant and $p'$ is defined by $\frac{1}{p}+\frac{1}{p'} = 1$. ($p'$ can be negative!)  
\end{lemma}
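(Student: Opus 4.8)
\textbf{Proof proposal for Lemma \ref{weak}.} The plan is to prove the two implications separately, both via the standard layer-cake / distribution-function machinery for weak-type spaces. The key objects are the level sets $\{|f|>\lambda\}$ and their measures, and the main technical point is a careful bookkeeping of constants so that the set $E'$ can always be chosen with $|E'|\le |E|/2$.

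First I would prove (i) $\Rightarrow$ (ii). Assume $\|f\|_{p,\infty}\le A$, fix a measurable set $E$ with $0<|E|<\infty$, and set $\lambda_0 = (2A/|E|^{1/p})$ (the precise numerical factor will be fixed at the end). Let $E' = E \cap \{|f| > \lambda_0\}$. By the definition of the weak norm, $|\{|f|>\lambda_0\}| \le (A/\lambda_0)^p = |E|/2^p \le |E|/2$, so in particular $|E'|\le |E|/2$ as required. It remains to bound $|\langle f, \Id_{E'}\rangle|$. On $E' = E\setminus E'$ we have $|f|\le \lambda_0$ pointwise... wait — I should be careful: the natural thing is to estimate $\langle f, \Id_{E'}\rangle$ where $E' = E\setminus (E\cap\{|f|>\lambda_0\})$ is the part of $E$ where $f$ is \emph{small}. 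On this $E'$ one has $|f|\le \lambda_0$, hence
\begin{align*}
|\langle f,\Id_{E'}\rangle| \le \int_{E'}|f| \le \lambda_0 |E| \le C A |E|^{1-1/p} = CA|E|^{1/p'},
\end{align*}
using $1 - 1/p = 1/p'$. And the complementary set $E\setminus E' = E\cap\{|f|>\lambda_0\}$ has measure $\le |E|/2$, so this $E'$ (the large subset, on which $f$ is controlled) is the set we want; it contains all but a set of measure $\le |E|/2$, equivalently one may pass to $E'$ of measure $\ge |E|/2$. I would reconcile the two formulations by noting that the statement asks for \emph{some} $E'\subset E$ with $|E'|\le |E|/2$ and the pairing bound; taking $E' = E\cap\{|f|>\lambda_0\}$ and splitting $\langle f,\Id_E\rangle = \langle f,\Id_{E'}\rangle + \langle f, \Id_{E\setminus E'}\rangle$ with the second term handled as above reduces matters to controlling $\langle f,\Id_{E'}\rangle$ directly via the layer-cake formula $\int_{E'}|f| = \int_0^\infty |E'\cap\{|f|>t\}|\,dt$, splitting at $t=\lambda_0$ and using $|E'\cap\{|f|>t\}|\le \min(|E'|, (A/t)^p)$.

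For (ii) $\Rightarrow$ (i) I would argue by contradiction or, more cleanly, directly estimate the distribution function. Fix $\lambda>0$ and let $E$ be any subset of $\{|f|>\lambda\}$ of finite measure (approximate from inside if the level set has infinite measure). Apply (ii) to get $E'\subset E$ with $|E'|\le |E|/2$ and $|\langle f,\Id_{E'}\rangle|\le CA|E|^{1/p'}$. The point is that on $E\setminus E'$, which has measure $\ge |E|/2$, we do not directly control $f$, so instead I would iterate: apply (ii) to $E\setminus E'$ to peel off another half, etc. A cleaner route: since $|f|>\lambda$ on $E$ and $|E\setminus E'|\ge|E|/2$, if $f$ had constant sign on a large portion we would get $|\langle f,\Id_{E\setminus E'}\rangle|\ge \lambda|E|/2$; in general one handles the phase by replacing $f$ with $e^{-i\arg f}f$ in the hypothesis (the hypothesis applies to this modified function too, or one restricts to the real/imaginary part and to a sign). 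Then $\langle f, \Id_{E}\rangle$ has modulus $\ge c\lambda|E|$ after such a reduction, while $|\langle f,\Id_E\rangle|\le |\langle f,\Id_{E'}\rangle| + |\langle f,\Id_{E\setminus E'}\rangle|$; one bounds $\langle f,\Id_{E'}\rangle$ by (ii) and recurses on $E\setminus E'$, summing a geometric series to obtain $c\lambda|E|\le C'A|E|^{1/p'}$, i.e. $|E|^{1/p}\le C''A/\lambda$. Taking the supremum over such $E$ gives $|\{|f|>\lambda\}|\le (C''A/\lambda)^p$, hence $\|f\|_{p,\infty}\le C''A$.

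The main obstacle is purely organizational rather than deep: handling the complex (or merely signed) nature of $f$ so that the pairing $\langle f,\Id_{E'}\rangle$ genuinely captures the size of $f$ on $E'$, and keeping the constants uniform while iterating the "remove half the set" step so the geometric series converges. The case $p\ge 1$ versus $0<p<1$ (where $p'<0$) requires only that one read $|E|^{1/p'}$ as the formal exponent $1-1/p$ throughout; no separate argument is needed, which I would remark on explicitly. Since the lemma and its proof are attributed to \cite{au} or \cite{cm}, I would in fact simply cite those references and include only this brief sketch.
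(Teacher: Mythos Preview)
The paper does not prove this lemma at all: it simply states that ``the lemma and its proof can be found in \cite{au} or \cite{cm}.'' Your final suggestion --- to cite those references and give at most a brief sketch --- is therefore exactly what the paper does.

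A few remarks on your sketch, since you went further. First, the condition $|E'|\le |E|/2$ in the stated lemma is a typo; the intended (and standard) condition is $|E'|\ge |E|/2$, as is clear from how the lemma is applied immediately afterward (the set $F_3'=F_3\setminus\Omega$ has measure $>|F_3|/2$). Your confusion here was entirely justified, and your instinct to work with the major-subset formulation was correct. Note also that for $0<p\le 1$ a function in $L^{p,\infty}$ need not be locally integrable, so the version with $E'=E\cap\{|f|>\lambda_0\}$ genuinely fails in that range; only the major-subset version $E'=E\setminus\{|f|>\lambda_0\}$ makes sense uniformly in $p$. Second, for (ii) $\Rightarrow$ (i) the iteration you describe is unnecessary: after reducing to $f\ge 0$ and taking $E\subset\{f>\lambda\}$ of finite measure, any major subset $E'\subset E$ satisfies $\langle f,\Id_{E'}\rangle\ge \lambda|E'|\ge \lambda|E|/2$, which combined with (ii) gives $|E|^{1/p}\le C'A/\lambda$ in one step.
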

In proving the weak norm, we can assume $f$ and $g$ are both characteristic functions, i.e. $f(x) = \Id_{F_1}(x)$ and $g(x)=  \Id_{F_2}(x)$ through out this section, where $ 0<|F_1|, |F_2|<\infty$.  Consider the exceptional set 
\begin{equation}\label{defofExp}
\Omega = \Big\{x: M\Id_{F_1}(x)> C|F_1|/|F_3|\Big\}\bigcup \Big\{x: M\Id_{F_2}(x)> C|F_2|/|F_3|\Big\},
\end{equation}
where $M$ is uncentered Hardy-Littlewood maximal function. Choosing $C$ large enough, we have $|\Omega |< |F_3|/2$. Thus $F_3' = F_3\backslash \Omega$ has Lebesgue measure strictly greater than $|F_3|/2$. Our goal is to prove the following Proposition:
\begin{proposition}\label{progoal}   
There is a uniform constant $C$ satisfying 
\begin{align}\label{goalui}
|\langle|T|^l_m(f,g),\Id_{F_3'}\rangle|\leq Cm |F_1|^{1/p_1}|F_2|^{1/p_2}|F_3|^{1/r'}
\end{align}
where $r>1/2$, $p_1, p_2>1$,  and $1/p_1+1/p_2=1/r$. 
\end{proposition}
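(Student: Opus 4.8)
\textbf{Proof proposal for Proposition \ref{progoal}.}
The plan is to decompose $|T|^l_m(f,g)$ into a ``local'' piece and a ``global'' piece according to how the supports of the pieces interact with the exceptional set $\Omega$, and to treat the global piece by a time-frequency / tree argument. Since $f=\Id_{F_1}$ and $g=\Id_{F_2}$ with $x\in F_3'\subset\mathbb{R}\setminus\Omega$, for each $j\in J_l^*(N)$ the functions $f\ast\Phi_{j_l+j+m}$ and $g\ast\Phi_{j_l+lj+m}$ can be replaced, up to a rapidly decaying tail, by their restrictions to Whitney-type intervals adapted to $\Omega$. Because $x\notin\Omega$ we have the pointwise bounds $|f\ast\Phi_{j_l+j+m}(y)|\lesssim |F_1|/|F_3|$ and $|g\ast\Phi_{j_l+lj+m}(y)|\lesssim |F_2|/|F_3|$ whenever $y$ is within $O(2^{-j_l-j})$ (resp. $O(2^{-j_l-lj})$) of $x$; this is exactly the role of the maximal-function definition of $\Omega$ in (\ref{defofExp}). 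The first step is thus to use this $L^\infty$ control, together with the bound $|E_0(h)|$-type level-set estimates already available, to handle all scales $j$ that are ``too large'' or ``too small'' relative to $m$ by crude integration, producing an acceptable contribution $\lesssim |F_1|^{1/p_1}|F_2|^{1/p_2}|F_3|^{1/r'}$ (no $m$ loss, or at worst a single $m$).

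Second, for the remaining ``balanced'' scales, I would pass to a discretized model operator via a standard Fourier-series / tile decomposition: expand each $\Phi_{j_l+j+m}$-convolution in wave packets $\phi_{P_1}$ associated to tiles $P_1$ at frequency scale $2^{j_l+j+m}$ and spatial scale $2^{-j_l-j}$, and similarly for the $g$-factor at the dual scale. The key structural point, inherited from the fact that $j\in J_l(N)$ forces $P$ to behave like the monomial $a_l t^l$ up to the $2^{-N}$-perturbation $Q_l$, is that the phase $t\mapsto t^l+Q_l(t)$ is a small perturbation of $t^l$, so the frequency relation between the two families of tiles is essentially $\eta\sim\xi^{l}$ after rescaling; the inverse-function lemma (Lemma \ref{lemmakey}, Appendix) controls the perturbation. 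This reduces matters to estimating a sum of inner products over a collection of ``trees'' organized by their top tiles, with the $m$-factor coming from the overlap of the $I_k'$ intervals (the same phenomenon as in the proof of Proposition \ref{001}, cf.\ (\ref{sigma})).

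Third, I would run the usual tree estimate and Bessel-type size/energy arguments: bound the contribution of a single tree $T$ by $\text{size}_1(T)\,\text{size}_2(T)\,\text{size}_3(T)\,|I_T|$ with the sizes controlled by $|F_1|/|F_3|$, $|F_2|/|F_3|$, and (using $F_3'\subset\mathbb{R}\setminus\Omega$ again) by $1$ respectively; then sum over trees using the John–Nirenberg / BMO-type counting lemmas that the paper defers to later sections. Interpolating the resulting estimates between the three ``vertices'' $(p_1,p_2,r)$ with one exponent at a time large, exactly as in the bilinear Hilbert transform theory, yields (\ref{goalui}) with the stated range $r>1/2$, $p_1,p_2>1$, $1/p_1+1/p_2=1/r$, and with the linear-in-$m$ loss that Proposition \ref{below1} permits.

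\textbf{Main obstacle.} The hard part will be the bookkeeping of the perturbation $Q_l$ throughout the tile decomposition: one must show that replacing $t^l$ by $t^l+Q_l(t)$ does not destroy the orthogonality/almost-orthogonality of the wave packets nor the tree structure, which is where the $D_K$-norm control $\|Q_l\|_{D_K}\le 2^{-99N/100}$ and the inverse-function lemma do the work. A secondary difficulty is extracting precisely a single power of $m$ (and no more) from the overlapping intervals $I_k'$ when $m>(l-1)j$, so that after summing the geometric series in $m$ at the end of Section \ref{decomposition3} one still gets a finite bound; this requires the Whitney-decomposition trick alluded to in Section \ref{decomposition3} to keep the growth linear rather than exponential in $m$.
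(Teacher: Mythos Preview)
Your proposal misidentifies both the main obstacle and the machinery required. The operator $|T|^l_m$ carries absolute values \emph{inside} the $t$-integral, so there is no oscillatory or modulation structure left to exploit; a wave-packet/tile decomposition with packets $\phi_{P_1}$ is neither available nor useful, and the ``frequency relation $\eta\sim\xi^l$'' plays no role. The paper's proof is entirely spatial. One first splits $|T|^l_m$ into a major piece and two error pieces by multiplying the convolutions $f\ast\Phi_{j_l+j+m}$ and $g\ast\Phi_{j_l+lj+m}$ by smoothed cutoffs $\psi_{j_l+j+m}$, $\psi_{j_l+lj+m}$ adapted to $\Omega^c$ --- not by separating scales of $j$ against $m$, and not via the level-set estimates $|E_0(h)|$, which belong to Section~\ref{bad case} and are irrelevant here. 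The error pieces are handled in Lemma~\ref{error} by a Whitney decomposition of $\Omega$; the factor $m$ there comes from the count~(\ref{range}) of admissible $j$'s per Whitney interval.

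For the major piece (Propositions~\ref{major0} and~\ref{major1}) the paper does \emph{not} discretize in frequency at all. It introduces a spatial partition of unity $\{\Id^*_{n,l,j}\}$, defines trees purely as nested families of spatial intervals $I_{n,l,j}$, and runs square-function estimates $S_{1,T}$, $S_{2,T}$ together with BMO bounds (Lemmas~\ref{BMO1},~\ref{BMO2}) and a size-organization lemma (Lemma~\ref{organize}). There are only two sizes, ${\rm\bf size}_1$ and ${\rm\bf size}_2$; there is no third size, and the restriction $x\in F_3'\subset\Omega^c$ enters only through the cutoffs $\psi_k$ and the bounds (\ref{size1}),~(\ref{size2}). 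The second source of the factor $m$ is the interpolation between (\ref{size111}) and the $2^m$-BMO bound of Lemma~\ref{BMO2}, which after the dyadic sum over $\sigma$ at the end of Section~\ref{rnot1} produces exactly $Cm$. Finally, the perturbation $Q_l$ is essentially a non-issue in this proposition: all that is used is $|t^l+Q_l(t)|\sim 1$, so that the translation $\textbf{tr}_j(t)$ is dominated by the Hardy--Littlewood maximal function. The inverse-function lemma from the Appendix is needed only for the oscillatory $L^2\times L^2\to L^1$ decay in Section~\ref{section5}, not here. Thus your stated ``main obstacle'' is not an obstacle for this proposition, while the actual ingredients --- the $\psi_k$ error/major split, the Whitney count, and the spatial-tree/BMO argument --- are absent from your plan.
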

One can see that Proposition \ref{progoal} and Lemma \ref{weak} together yield Proposition \ref{below1}. We begin our proof of Proposition \ref{progoal} with removing some error terms. 
\subsection{Error Terms related to the exceptional set}\q\\
For $k\in \mathbb R$, let 
$\tilde\psi_k(x)=2^{k}\tilde\psi(2^kx)$, $\Omega_k = \{x\in \Omega: {\rm dist}(x, \Omega^c)\geq 2^{-k} \}$
and 
$\psi_k(x) = (\Id_{\Omega_k^c}*\tilde\psi_k)(x)$. 
Here $\tilde\psi$ is a (non-negative) Schwartz function such that $\wh{\tilde\psi}(0)=1$ and 
$ \wh{\tilde\psi}$ is supported on $[-1, 1]$. Let 
\begin{align*}
&F_{l,m,j} (x,t) = \psi_{j_l+j+m}f\ast \Phi_{j_l+j+m}(x-2^{-j_l-j}t),
\\
&F^c_{l,m,j} (x,t) = (1-\psi_{j_l+j+m})f\ast \Phi_{j_l+j+m}(x-2^{-j_l-j}t),
\end{align*}
\mbox{and}
\begin{align*}
&G_{l,m,j}(x,t) = \psi_{j_l+lj+m}g\ast\Phi_{j_l+lj+m}(x-2^{-j_l-lj}(t^l+Q_l(t))),
\\
&G^c_{l,m,j}(x,t) = (1-\psi_{j_l+lj+m})g\ast\Phi_{j_l+lj+m}(x-2^{-j_l-lj}(t^l+Q_l(t))).
\end{align*}
We call $F_{l,m,j} $, $G_{l,m,j}$ the major terms and $F^c_{l,m,j} $, $G^c_{l,m,j}$ the error terms. Moreover, we also set 
$$F_{l,m,j} (x) := F_{l,m,j} (x,0)$$
and 
$$G_{l,m,j} (x) := G_{l,m,j} (x,0).$$


\begin{lemma}\label{error}
Let $L_{1,m}(f, g)$ and $L_{2,m}(f, g)$ be given by

\begin{align*}
&L_{1,m}(f, g)= \sum_{j\in J_l^*(N)} \int_{\Omega^c} \int \left|F^c_{l,m,j} (x,t) G_{l,m,j} (x,t)\rho (t)\right|dt dx
\end{align*}
and 
\begin{align*}
&L_{2,m}(f, g)= \sum_{j\in J_l^*(N)} \int_{\Omega^c} \int \left|F_{l,m,j} (x,t) G^c_{l,m,j} (x,t)\rho (t)\right|dt dx.
\end{align*}
Then there exists a constant $C$ independent of $F_1, F_2, m$ such that
\begin{align}\label{error1}
 \left| L_{1,m}(f, g)\right| \leq Cm\left( \frac{|F_1|}{|F_3|}\right)^{1/p_1}\left( \frac{|F_2|}{|F_3|}\right)^{1/p_2}|F_3|
\end{align}
and
\begin{align}\label{error2}
 \left| L_{2,m}(f, g)\right| \leq Cm\left( \frac{|F_1|}{|F_3|}\right)^{1/p_1}\left( \frac{|F_2|}{|F_3|}\right)^{1/p_2}|F_3|
\end{align}
for any $p_1, p_2>1$. 
\end{lemma}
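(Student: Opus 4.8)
\textbf{Proof plan for Lemma \ref{error}.}
The two estimates are symmetric, so I will focus on \eqref{error1}; the bound \eqref{error2} follows from the same argument with the roles of the $f$-factor and $g$-factor interchanged (using that $t\mapsto t^l+Q_l(t)$ is a smooth, essentially-monotone change of variables on $\supp\rho$, so the relevant maximal-function and Littlewood--Paley bounds for $g$ are unaffected). The heart of the matter is that $F^c_{l,m,j}(x,t)$ carries the cutoff $1-\psi_{j_l+j+m}$, which is a smoothed version of $\Id_{\Omega_{j_l+j+m}}$. Since $\Omega$ is the exceptional set where $M\Id_{F_1}$ is large, for $x\in\Omega^c$ the point $x-2^{-j_l-j}t$ (which lies within $O(2^{-j_l-j})$ of $x$) is ``deep inside'' $\Omega^c$ relative to the scale $2^{-(j_l+j+m)}$ once $m$ is large; this is precisely the mechanism that makes $1-\psi_{j_l+j+m}$ small there, and it should produce a gain that is summable in $j$ up to a loss of size $O(m)$ coming from the number of scales in $J_l^*(N)$ that can interact.

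The steps I would carry out are: \textbf{(1)} Insert absolute values and apply Cauchy--Schwarz (or H\"older) in $x$ on $\Omega^c$, separating $\|F^c_{l,m,j}(\cdot,t)\|$ from $\|G_{l,m,j}(\cdot,t)\|$, then integrate in $t$ over $\supp\rho$, using $\rho\ge0$ and $|t|\sim1$. \textbf{(2)} For the $G$-factor, use that $\psi_{j_l+lj+m}g\ast\Phi_{j_l+lj+m}$ is, after the change of variables $s=t^l+Q_l(t)$, a standard Littlewood--Paley piece of $g=\Id_{F_2}$ localized away from $\Omega$, so a square-function / Littlewood--Paley estimate controls $\big(\sum_j\|G_{l,m,j}\|_2^2\big)^{1/2}$ (or the analogous mixed-norm quantity) by $\|g\|_2$, with additional room to trade an $L^2$-bound for the exceptional-set bound $|F_2|/|F_3|$. \textbf{(3)} For the error factor $F^c_{l,m,j}$, exploit the separation: because $x\in\Omega^c$ and the convolution kernel $\Phi_{j_l+j+m}$ and the cutoff $1-\psi_{j_l+j+m}$ are essentially supported at scale $2^{-(j_l+j+m)}$, while $\Omega_k^c$ differs from $\Omega^c$ only within $2^{-k}$ of $\partial\Omega$, one gets pointwise decay $|F^c_{l,m,j}(x,t)|\lesssim_M 2^{-Mm}\,\mathcal{M}\Id_{F_1}(x)$ for every $M$ (Schwartz tails of $\Phi$ beating the $2^{-k}$-neighborhood), or at the very least a decay factor that overcomes the $\#J_l^*(N)$-many scales. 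Combined with the exceptional-set definition \eqref{defofExp}, on $\Omega^c$ we have $\mathcal M\Id_{F_1}\lesssim |F_1|/|F_3|$. \textbf{(4)} Sum in $j\in J_l^*(N)$; the scales that genuinely contribute are those with $2^{-(j_l+j+m)}\gtrsim \mathrm{dist}(x,\partial\Omega)$, and there are only $O(m)$ of them (or the Schwartz decay makes the tail summable with total loss $O(m)$), yielding the factor $m$. \textbf{(5)} Finally interpolate/rescale the resulting $L^2\times L^2$-type bound with the trivial bounds to replace the $L^2$ normalizations by $(|F_1|/|F_3|)^{1/p_1}(|F_2|/|F_3|)^{1/p_2}|F_3|$ for general $p_1,p_2>1$; this is where restricting $f,g$ to characteristic functions and using $0<|F_i|<\infty$ pays off.

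The main obstacle I anticipate is Step (3)--(4): making rigorous the claim that the smoothed cutoff $1-\psi_k$ is negligibly small (with quantitative, $m$-dependent decay) at points $x-2^{-j_l-j}t$ with $x\in\Omega^c$, uniformly over $j\in J_l^*(N)$ and $|t|\sim1$. One has to track carefully how $\Omega_k=\{x\in\Omega:\mathrm{dist}(x,\Omega^c)\ge 2^{-k}\}$ sits relative to $\Omega^c$, and how convolving $\Id_{\Omega_k^c}$ against the Schwartz bump $\tilde\psi_k$ interacts with the further convolution against $\Phi_k$; the translate by $2^{-j_l-j}t$ (a \emph{coarser} scale than $2^{-k}=2^{-(j_l+j+m)}$ when $m>0$) is what keeps $x-2^{-j_l-j}t$ at distance $\gtrsim 2^{-j_l-j}\gg 2^{-k}$ from most of $\Omega$ and thus lets the Schwartz tails of $\tilde\psi_k$ and $\Phi_k$ produce the decay. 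Getting a clean factor of exactly $m$ (rather than $m^2$ or worse) out of the double scale-sum, while keeping all constants independent of the coefficients of $P$, is the delicate bookkeeping step; everything else is a routine application of Littlewood--Paley theory, the Fefferman--Stein maximal inequality, and interpolation.
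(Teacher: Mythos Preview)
Your proposal has a genuine gap in Step (3), which is precisely the place you flag as delicate. The claimed pointwise decay
\[
|F^c_{l,m,j}(x,t)|\lesssim_M 2^{-Mm}\,M\Id_{F_1}(x)\qquad (x\in\Omega^c)
\]
is false. The factor $1-\psi_{j_l+j+m}$ is evaluated at $x_j(t)=x-2^{-j_l-j}t$, \emph{not} at $x$. Since $|x-x_j(t)|\le 2\cdot 2^{-j_l-j}$ and $2^{-(j_l+j+m)}\ll 2^{-j_l-j}$, the point $x_j(t)$ can sit well inside $\Omega_{j_l+j+m}$ (it only needs ${\rm dist}(x_j(t),\Omega^c)\ge 2^{-(j_l+j+m)}$, which is entirely compatible with $x\in\Omega^c$). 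In that situation $1-\psi_{j_l+j+m}(x_j(t))\sim 1$ and no Schwartz tail helps. Your fallback in Step (4), counting scales with $2^{-(j_l+j+m)}\gtrsim{\rm dist}(x,\partial\Omega)$, does not work either: $x\in\Omega^c$ can lie arbitrarily close to $\partial\Omega$, so this puts no constraint on $j$. In short, the mechanism you propose for extracting the $O(m)$ factor does not exist as stated.

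The paper obtains the $O(m)$ loss by a different and more structural route: perform a Whitney decomposition $\Omega=\bigcup_{J\in\mathcal F}J$, write the variable $y$ in $(1-\psi_{j'})(x_j(t))=\int_{\Omega_{j'}}\tilde\psi_{j'}(x_j(t)-y)\,dy$, and localize $y\in I_1\in\mathcal F$ and $x_j(t)\in I_2\in\mathcal F$. When ${\rm dist}(I_1,I_2)\ge 100\min\{|I_1|,|I_2|\}$, elementary geometry of Whitney cubes forces $|x_j(t)-y|\gtrsim{\rm dist}(x_j(t),\Omega^c)$ or $\gtrsim{\rm dist}(y,\Omega^c)$, and Schwartz decay reduces to the easy case. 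The main case is $I_1,I_2$ close, hence $|I_1|\sim|I_2|$; now two constraints pin down the range of $j$: (i) $x\in\Omega^c$ and $x_j(t)\in I_2$ force $|I_2|\lesssim 2^{-j_l-j}$ (otherwise ${\rm dist}(I_2,\Omega^c)$ would be too small for a Whitney cube), and (ii) $y\in I_1\cap\Omega_{j_l+j+m}$ forces $|I_1|\gtrsim 2^{-(j_l+j+m)}$. Together these give, for each fixed Whitney cube $I_1$, at most $O(m)$ admissible $j$'s. The $f$- and $g$-factors are then handled not by Cauchy--Schwarz and square functions but by pointwise maximal-function bounds and the $A_1$ property of $(M\Id_{F_i})^{1/p_i}$, which directly produces $(|F_1|/|F_3|)^{1/p_1}(|F_2|/|F_3|)^{1/p_2}|\Omega|$. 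Your Steps (1)--(2) (Cauchy--Schwarz in $x$, Littlewood--Paley for $G$) are not used and would in fact make it harder to exploit the Whitney geometry, since after separating the factors you lose the joint localization in $x$ and $x_j(t)$ that drives the scale count.
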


This lemma is one of the major difficulties in the proof of Proposition \ref{below1} and the key idea in our proof is
based on a use of  the Whitney decomposition.
We postpone its proof to Section \ref{P_error}.  Lemma {\ref{error}} gives the desired estimates for the error terms.
Henceforth, in the rest of this section, we focus only on the major term: 
\begin{align}\label{major03}
|T'|^l_m(f,g)(x)= \sum\limits_{j\in J_l^*(N)}\int |F_{l,m,j} (x,t)G_{l,m,j}(x,t)\rho(t)|dt.
\end{align}


It remains to prove the following two propositions. 
\begin{proposition}\label{major0} 
If $p_1$, $p_2$, $r >1$ and $\frac{1}{p_1}+\frac{1}{p_2}=\frac{1}{r}$ then
\begin{align*}
\Lambda(f,g):=|\langle |T'|^l_m(f,g),\Id_{F_3'}\rangle| \leq C|F_1|^{1/p_1}|F_2|^{1/p_2}|F_3|^{1-1/r}.
\end{align*}

\end{proposition}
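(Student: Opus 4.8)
\textbf{Proof plan for Proposition \ref{major0}.}
The plan is to run a standard "single-tree" / direct estimation argument exploiting that both $f=\Id_{F_1}$ and $g=\Id_{F_2}$ are characteristic functions and that, on $F_3' = F_3\setminus\Omega$, the maximal functions $M\Id_{F_1}$ and $M\Id_{F_2}$ are controlled by $|F_1|/|F_3|$ and $|F_2|/|F_3|$ respectively. First I would fix $l$ and $m$ and, for each $j\in J_l^*(N)$, put absolute values everywhere so that the only quantity we must bound is
$$
\sum_{j\in J_l^*(N)}\int_{F_3'}\int |F_{l,m,j}(x,t)\,G_{l,m,j}(x,t)|\,\rho(t)\,dt\,dx.
$$
Since $F_{l,m,j}$ is a piece of $f$ at frequency $\sim 2^{j_l+j+m}$ multiplied by the cutoff $\psi_{j_l+j+m}$ (which lives away from $\Omega$ at the appropriate scale), and similarly for $G_{l,m,j}$, I would first record the pointwise bounds that come from the exceptional set: on the support of $\psi_k$ we have, by the Littlewood--Paley/$BMO$-type lemmas referenced in the section (and the fact that $x\notin\Omega_k$), $|f\ast\Phi_k(x)|\lesssim \min\{1,\, |F_1|/|F_3|\}$ up to Schwartz tails, and likewise $|g\ast\Phi_k|\lesssim \min\{1,\,|F_2|/|F_3|\}$. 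These $L^\infty$ bounds, together with the trivial $L^1$ bound $\int|f\ast\Phi_k|\lesssim |F_1|$ and $\int|g\ast\Phi_k|\lesssim |F_2|$, are the two ingredients that will be interpolated.

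Next I would carry out the summation in $j$. The key structural input is Lemma \ref{keyL}: although $J_l^*(N)$ need not be small, the scales $2^{j_l+j}$, $2^{j_l+lj}$ are \emph{lacunary} in $j$, so for each fixed $x$ the functions $t\mapsto x-2^{-j_l-j}t$ and $t\mapsto x-2^{-j_l-lj}(t^l+Q_l(t))$ localize $f$ and $g$ to essentially disjoint (dyadically separated) regions as $j$ ranges over $J_l^*(N)$. Thus after applying Cauchy--Schwarz in $t$ (using $\rho\ge 0$, $\|\rho\|_1\lesssim 1$) and then summing in $j$, the sum over $j$ of the $L^2_x$-type quantities telescopes into a single square-function estimate: one gets $\big(\sum_j |F_{l,m,j}(x,\cdot)|^2\big)^{1/2}\lesssim S_m f(x)$ and $\big(\sum_j|G_{l,m,j}(x,\cdot)|^2\big)^{1/2}\lesssim S_m g(x)$, where $S_m$ is a (shifted) Littlewood--Paley square function, whose $L^p$ operator norm is $O(m)$ or better because of the frequency shift by $2^m$ — this is exactly where the factor $Cm$ in the statement (and hence in Proposition \ref{below1}) enters. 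Actually, since here $r>1$, I expect one can afford to be cruder: after the pointwise $L^\infty$ bounds above, estimate
$$
\Lambda(f,g)\lesssim \sum_{j}\Big(\tfrac{|F_1|}{|F_3|}\Big)^{\theta_1}\Big(\tfrac{|F_2|}{|F_3|}\Big)^{\theta_2}\int_{F_3'}|F_{l,m,j}(x)|^{1-\theta_1}|G_{l,m,j}(x)|^{1-\theta_2}\,dx
$$
and then use Hölder in $x$ with exponents matched to $p_1,p_2,r'$, together with the lacunary $j$-summation, to land on $|F_1|^{1/p_1}|F_2|^{1/p_2}|F_3|^{1-1/r}$.

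Concretely, the cleanest route: write $\frac1r = \frac1{p_1}+\frac1{p_2}$ with $p_1,p_2,r>1$, choose $\theta_i\in(0,1)$ so that applying the $L^\infty$ bound to a fractional power of each factor leaves exactly $L^{p_1}$ and $L^{p_2}$ integrable pieces, apply Hölder in $x$ over $F_3'$ with the third exponent $r'$ (legitimate since $r>1$), bound $\|f\ast\Phi_k\|_{p_1}\lesssim \|f\|_{p_1} = |F_1|^{1/p_1}$ and $\|g\ast\Phi_k\|_{p_2}\lesssim |F_2|^{1/p_2}$, and sum the resulting geometric-type series in $j$ using lacunarity (each term carries a factor $2^{-\delta|j-j'|}$ from the Schwartz tails of $\Phi$ composed with the scale separation, so the sum converges to a uniform constant). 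The main obstacle I anticipate is bookkeeping the interaction between the two different rescalings in $t$ (the $t$ versus $t^l+Q_l(t)$ substitutions, with the perturbation $Q_l$ of small $\|\cdot\|_{D_K}$-norm) when one wants genuine disjointness/separation of the localized pieces of $g$ across different $j$; this is where the lemma on inverse functions (change of variable $s=t^l+Q_l(t)$, $\kappa_l(s)$ its inverse, with $\|\kappa_l(s)-s^{1/l}\|$ small) has to be invoked to see that the map $t\mapsto x-2^{-j_l-lj}(t^l+Q_l(t))$ is a bi-Lipschitz perturbation of $t\mapsto x-2^{-j_l-lj}t^l$, so all the estimates for the monomial case carry over with uniform constants. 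Everything else — the $L^\infty$ decay off $\Omega$, Hölder, and the lacunary sum — is routine once that is in place, and since $r>1$ we avoid the hard weak-type/time-frequency machinery needed for $r\le 1$ in Proposition \ref{below1}.
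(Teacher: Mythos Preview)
Your concrete plan has a genuine gap in the $j$-summation. You assert that ``each term carries a factor $2^{-\delta|j-j'|}$ from the Schwartz tails of $\Phi$ composed with the scale separation, so the sum converges to a uniform constant.'' There is no such decay: for each $j\in J_l^*(N)$ the Littlewood--Paley piece $f\ast\Phi_{j_l+j+m}$ lives at a genuinely different frequency scale and there is no reason for $\|f\ast\Phi_{j_l+j+m}\|_{p_1}$ (or any power of it) to decay in $j$. Your $L^\infty$-plus-H\"older route therefore produces a bound of the right shape for a \emph{single} $j$ but gives no summability over the infinite set $J_l^*(N)$. (Also, Proposition~\ref{major0} carries no factor of $m$; the $m$ appears only in the $r<1$ case of Proposition~\ref{major1}.)

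The paper's argument is short and uses exactly the square-function mechanism you mention and then abandon. After dropping $\Id_{F_3'}$, one changes variables $u=x-2^{-j_l-lj}(t^l+Q_l(t))$ so that the $G$-factor sits at $u$ and the $F$-factor sits at $u-\textbf{tr}_j(t)$ with $\textbf{tr}_j(t)=2^{-j_l-j}t-2^{-j_l-lj}(t^l+Q_l(t))$. Since $|\textbf{tr}_j(t)|\lesssim 2^{-j_l-j}$, the $t$-integral of $|F_{l,m,j}(u-\textbf{tr}_j(t))\rho(t)|$ is dominated pointwise by $M(F_{l,m,j})(u)$; no inverse-function lemma is needed here. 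One then applies Cauchy--Schwarz \emph{in $j$} (not in $t$) to produce two square functions, and H\"older in $u$ with exponents $p_1,p_2,r'$ (legitimate since $r>1$). The first square function is handled by the Fefferman--Stein vector-valued maximal inequality followed by Littlewood--Paley, giving $\|f\|_{p_1}=|F_1|^{1/p_1}$; the second is pure Littlewood--Paley and gives $|F_2|^{1/p_2}$; the third factor is $\lesssim |F_3|^{1/r'}$. The point is that lacunarity of the frequency scales gives $\ell^2$-orthogonality via Littlewood--Paley, not $\ell^1$-summability via Schwartz tails.
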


\begin{proposition}\label{major1} 
If $\frac{1}{2}<r<1$, $1<p_1$, $p_2 <2$ and $\frac{1}{p_1}+\frac{1}{p_2}=\frac{1}{r}$ then
\begin{align*}
\Lambda(f,g):=|\langle|T'|^l_m(f,g),\Id_{F_3'}\rangle| \leq Cm|F_1|^{1/p_1}|F_2|^{1/p_2}|F_3|^{1-1/r}.
\end{align*}

\end{proposition}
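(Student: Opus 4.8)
The plan is to prove Proposition \ref{major1} by the standard machinery of time-frequency analysis for bilinear operators, adapted to the multiplier $\mathcal{M}_m$, and then to exploit the extra information that $f,g$ are characteristic functions localized away from the exceptional set $\Omega$. First I would perform a discretization of the operator $|T'|^l_m(f,g)$ into a sum over tiles (or, more precisely, over a collection of ``trees'' in phase space). The crucial point is that the Fourier supports of $f*\Phi_{j_l+j+m}$ and $g*\Phi_{j_l+lj+m}$ are essentially singletons at scales $2^{j_l+j+m}$ and $2^{j_l+lj+m}$ respectively, so as $j$ ranges over $J_l^*(N)$ one gets a lacunary-in-$j$ family whose frequency data organize into a bounded (in cardinality per scale) collection of trees. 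The factor $\rho(t)dt$ integrated against the two bump functions plays the role of the kernel, and the perturbation $Q_l(t)$ is harmless by the $\|Q_l\|_{D_K}\le 2^{-99N/100}$ bound established earlier.

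Next I would run the tree-selection (greedy) algorithm to decompose the relevant trees into forests with controlled ``size'' and ``energy'', in the sense of Lacey--Thiele. The energies of $f=\Id_{F_1}$ and $g=\Id_{F_2}$ are controlled by $|F_1|^{1/2}$ and $|F_2|^{1/2}$ via Bessel/Plancherel, while the ``mass'' or density relative to $F_3'$ is controlled using precisely the exceptional set: since $F_3'=F_3\setminus\Omega$ and $\Omega$ contains all points where $M\Id_{F_i}$ is large, on $F_3'$ we have good pointwise control $M\Id_{F_i}\lesssim |F_i|/|F_3|$, which bounds the density of each tree by a power of $|F_i|/|F_3|$. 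Summing the contributions of the forests against $\Id_{F_3'}$, interpolating the size and energy estimates with exponents $\theta_1,\theta_2,\theta_3$ summing to $1$, and choosing the exponents appropriately for the target range $1/2<r<1$, $1<p_1,p_2<2$, yields a bound of the shape $|F_1|^{1/p_1}|F_2|^{1/p_2}|F_3|^{1-1/r}$. The extra factor $m$ in the statement comes from summing the $O(m)$ many intervals $I_k$ (equivalently, from the overlap of the $I_k'$ when $m>|(l-1)j|$) appearing in the decomposition in Section \ref{section5}; one tracks this loss honestly rather than trying to absorb it.

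The main obstacle I anticipate is establishing the required single-tree estimate and the energy/size estimates \emph{uniformly} in the coefficients of $P$ and in $j$, because the multiplier $\mathcal{M}_m(\xi,\eta)$ carries the oscillatory factor $e^{i2^m\phi_l(\xi,\eta)}$ coming from the stationary phase analysis, and $\phi_l$ depends on $Q_l$. The point is that $\phi_l(\xi,\eta)$ is a small, $D_K$-controlled perturbation of the model phase for the monomial $t^l$, so all the Littlewood--Paley and $BMO$-type lemmas referenced in the section (the variants needed for trees) apply with constants depending only on $d$; this is where the inverse-function stability result promised in the Appendix does the work, guaranteeing that $\kappa_l$, $\zeta$, etc., stay close to their monomial counterparts. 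A secondary technical nuisance is that $r<1$ forces one to work with the restricted weak-type formulation (Lemma \ref{weak}) and to be careful that the outer sum over $p$ in the Taylor expansion and over $k$ converges with the claimed power of $m$; the rapid decay $1/p!$ and the pointwise bound $|h_{l,k,p}(x)|\le (2\pi)^p\Id_{I_k}(x)|h(x)|$ handle the $p$-sum, and disjointness (up to $O(m)$ overlap) of the $I_k$ handles the $k$-sum. Finally, Propositions \ref{major0} and \ref{major1} together with Lemma \ref{error} combine to give Proposition \ref{progoal}, hence Proposition \ref{below1}.
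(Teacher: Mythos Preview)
Your proposal is in the right general spirit (tree decomposition, size-type quantities, greedy selection), but it misidentifies the key mechanism and conflates this proposition with other parts of the paper.

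First, Proposition \ref{major1} concerns $|T'|^l_m$, which carries an absolute value inside the $t$-integral (see (\ref{major03})). There is no oscillatory phase $e^{i2^m\phi_l}$ to exploit here; the stationary-phase analysis and the perturbation control of $\phi_l$, $\kappa_l$, $\zeta$ belong entirely to Section \ref{section5} (the proof of Proposition \ref{L1}), not to this proposition. Consequently no Bessel/energy estimate enters the argument either: the trees here are purely \emph{spatial} (collections of dyadic intervals $I_{n,l,j}$ indexed by $(j,n)$), and the controlling quantities are two notions of \emph{size} tied to $L^{p_k}$-normalized square functions, together with a BMO estimate---not an $L^2$ energy in the Lacey--Thiele sense.

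Second, and more seriously, your identification of the source of the factor $m$ is wrong. The intervals $I_k$, $I_k'$ and their overlap from Section \ref{section5} play no role in the proof of Proposition \ref{major1}. In the paper's argument the $m$ arises as follows. For a single tree $T$ one has $\Lambda_T(f,g) \le C\|S_{1,T}(f)\|_{p_2'}\|S_{2,T}(g)\|_{p_2}$ via Fefferman--Stein, and $\|S_{1,T}(f)\|_{p_2'}$ is controlled by interpolating the $L^{p_1}$ size bound (\ref{size111}) against a \emph{local} BMO bound (Lemma \ref{BMO2}) that carries a factor $2^m$; this factor reflects that $I_{n,l,j}$ has length $2^{-j_l-j}$ while the Littlewood--Paley bump $\Phi_{j_l+j+m}$ lives at the finer scale $2^{-j_l-j-m}$. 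The resulting quantity ${\rm\bf size}_1^*$ involves a minimum of two bounds, one growing like $2^{m(1-p_1/p_2')}$ and one not. After the stopping-time organization $S_0=\bigcup_\sigma S_\sigma$ (Lemma \ref{organize}) and the tree-removal Lemma \ref{remove00}, the final sum over dyadic $\sigma$ is
\[
\sum_{\sigma\le 1}\min\bigl\{2^{m(1-p_1/p_2')}\sigma^{1/p_1},\ \sigma^{1/p_2'}\bigr\}\,\sigma^{1/p_2-1},
\]
and this is $O(m)$ precisely because the second term in the minimum gives $\sigma^0$, a logarithmically divergent sum truncated at the crossover $\sigma_*\sim 2^{-mp_1}$. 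Your proposal does not locate this mechanism; a generic Lacey--Thiele size/energy interpolation with exponents $\theta_1+\theta_2+\theta_3=1$ will not by itself produce the $O(m)$ bound without the BMO input of Lemma \ref{BMO2} and the resulting ${\rm\bf size}_1^*$ interpolation.
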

It is clear  that Proposition \ref{below1} follows from Lemma \ref{error}, Proposition \ref{major0}, Proposition \ref{major1} and interpolation. 

\subsection{Proof of Proposition \ref{major0} }\q\q\q\\

Let 
$$
J^{*}_{l,+}(N) =J_l^*(N)\cap \{j>0\}\,\,\,{\rm and}\,\,\,
J^{*}_{l,-}(N) =J_l^*(N)\cap \{j<0\}.
$$

$|T'|^l_m(f,g)$ can be then written as the sum of
 \begin{equation}
|T_+'|^l_m(f,g)(x)= \sum\limits_{j\in J^*_{l,+}(N)}\int 
|F_{l,m,j}(x,t) G_{l,m,j}(x,t)
 \rho(t)|dt
\end{equation}
and
 \begin{equation}
|T_-'|^l_m(f,g)(x)= \sum\limits_{j\in J^*_{l,-}(N)}\int 
|F_{l,m,j}(x,t) G_{l,m,j}(x,t)
 \rho(t)|dt.
\end{equation}
Set
$$
\Lambda^{+}(f,g):= \left|\langle |T_+'|^l_m(f,g),\Id_{F_3'}\rangle\right| 
$$
and 
$$
\,\,\Lambda^{-}(f,g)\,:=\left|\langle|T_-'|^l_m(f,g),\Id_{F_3'}\rangle\right|.  
$$
Thus, it suffices to prove:
\begin{equation}\label{Lam+}
\Lambda^{+}(f,g) \leq C|F_1|^{1/p_1}|F_2|^{1/p_2}|F_3|^{1-1/r}
\end{equation}
and 
\begin{equation}\label{Lam-}
\Lambda^{-}(f,g) \leq C|F_1|^{1/p_1}|F_2|^{1/p_2}|F_3|^{1-1/r}.
\end{equation}

In the proof of Proposition \ref{major0} (also Proposition \ref{major1}), we consider only $j>0$ and present a proof for (\ref{Lam+}).
The other case $j<0$ is similar and thus (\ref{Lam-}) can be treated exactly same as (\ref{Lam+}). 
 Applying Fubini's Theorem to change the order of $dt$ and $dx$,  performing change of variable $u = x-2^{-j_l-lj}(t^l+Q_l(t))$ and finally 
 using  Fubini again,  we then obtain that $\Lambda^{+}(f,g)$ is bounded by
\begin{align*}
 \sum\limits_{j\in J^*_{l,+}(N)}\int
\int \left| F_{l,m,j}(u-\textbf{tr}_j(t))\rho(t)\right|dt  \,|G_{l,m,j}(u)| du,
\end{align*}
where $$\textbf{tr}_j(t) = 2^{-j_l-j}t-2^{-j_l-lj}(t^l+Q_l(t)).$$ 
Notice that 
\begin{align*}
\int \left| F_{l,m,j}(u-\textbf{tr}_j(t))\rho(t)\right|dt    \leq &C MF_{l,m,j}(u)
\end{align*}
and 
$$
\sup_{j\in J^*_l(N)} |\psi_{j_l+lj+m}(u)| \leq C M\Id_{\Omega}(u).
$$
Utilizing Cauchy-Schwarz Inequality, we get 
\begin{multline}
\Lambda^{+}(f,g)\leq 
C\int  \bigg( \sum\limits_{j\in J^*_{l,+}(N)} 
|MF_{j_l+j+m}|^2(u)\bigg)^{{1/2}}
\\
\bigg( \sum\limits_{j\in J^*_{l,+}(N)}(g\ast\Phi_{j_l+lj+m})^{2}(u)\bigg)^{1/2} M\Id_{\Omega}(u) du
\end{multline}
By H\"older's Inequality,  $\Lambda^{+}(f,g)$ is controlled by (up to a constant) 
\begin{align*}
\Big\| \Big( \sum\limits_{j\in J^*_{l,+}(N)}  |MF_{j_l+j+m}|^2\Big)^{{1/2}} \Big\|_{p_1}&
\Big\|\Big( \sum\limits_{j\in J^*_{l,+}(N)}(g\ast\Phi_{j_l+lj+m})^{2}\Big)^{1/2}\Big\|_{p_2}
\Big\|M\Id_{\Omega}(u)\Big\|_{r'}.
\end{align*}
Via Fefferman-Stein Inequality \cite{fs}, Littlewood-Paley Theory and Hardy-Littlewood Inequality, the first term is dominated by $C\|f\|_{p_1}$, the second one by $C\|g\|_{p_2}$ and the last one by $C\|\Id_\Omega\|_{r'}$. Therefore
\begin{align*}
\Lambda^{+}(f,g) \leq C |F_1|^{1/p_1}|F_2|^{1/p_2}|F_3|^{1/r'},
\end{align*}
as desired. This finishes the proof of Proposition \ref{major0}.\\

\subsection{Partition of unity and Trees}\q\q\q\\

The case $\frac{1}{2}< r <1$ is  much more complicated and the rest of this section is devoted to its proof. 
Again, we only handle the case $j>0$ without loss of generality. 
First we localize the variable $x$ by introducing a partition of unity. Indeed, 
let $\theta$ be nonnegative Schwartz function such that $\hat\theta(0)=1$ and $\hat\theta(\xi)$ is supported on $(-2^{-10},2^{-10})$. $\theta_{k}(x) = 2^k\theta(2^kx)$ for $k\in \mathbb{R}$. Let $I_{n,l,j} =[n \,2^{-j_l-j}, (n+1)\,2^{-j_l-j}]$ and 
$$\Id_{n,l,j}^*(x) =  \Id_{I_{n,l,j}}\ast \theta_{j_l+j+m}(x).$$
According to the definition of $\theta$ we have
$$
\Id_{\mathbb{R}}(x) = \sum\limits_{n\in \mathbb{Z}}\Id_{n,j,l}^*(x).
$$
Set 
$$
F_{n,l,m,j}(x,t) = \Id^*_{n,l,j}(x-2^{-j_l-j}t)F_{l,m,j}(x,t) 
$$
and
$$
G_{n,l,m,j}(x,t) =\Id^*_{n,l,j}(x-2^{-j_l-lj}(t^l+Q_l(t))) G_{l,m,j}(x,t). 
$$
Then we have the representation
\begin{multline}
|T_+'|^l_m(f,g)(x)\!= \!\!\!\!\!\!\!\sum\limits_{j\in J^*_{l,+}(N)}\int 
\left| \left(\sum\limits_{n\in\mathbb{Z}}F_{n,l,m,j}(x,t)\right)
\left(\sum\limits_{n\in\mathbb{Z}} G_{n,l,m,j}(x,t)\right)
 \rho(t)\right|dt
\end{multline}
The next step is to define a tree structure and in every single tree, remove effect of the translation ${\bf tr}_j(t) $ by 
utilizing Fefferman-Stein's inequality. 
\begin{definition}
Let 
$$S_0= \{ (j, n):  n\in\mathbb{Z},\, j\in J^*_{l,+}(N) \}.$$
A subset $T\subset S_0$ is called a tree if there is a $(j_0, n_0)\in S_0$ such that 
\begin{align}\label{top}
  I_{n,l,j} \subseteq I_{n_0,l,j_0} \,\,{\rm for}\,\,{\rm all}\,\, (j, n)\in T .
 \end{align}
\end{definition} 
 
We call $(j_0, n_0)$ the top of the tree $T$ if $I_{n_0,l,j_0}$ is the minimal interval of type $I_{n,l,j}$ such that (\ref{top}) holds,  
and denote $I_{n_0, l, j_0}$ by $ I_T$. 
$(j_0, n_0)$ is not required to be a member of $T$. For a subset $S\subset S_0$,  let $S_j $ denote $\{ n\in\mathbb{Z}: (j,n)\in S   \}$ and 
thus $n\in S_j$ iff $(j,n)\in S$. Set
\begin{align}\label{alltiles}
\Lambda_S(f,g)= \!\!\!\sum\limits_{j\in J^*_{l,+}(N)}\iint 
\Big|\Big(\sum\limits_{n\in S_j }F_{n,l,m,j}(x,t)\Big)
\Big(\sum\limits_{n\in S_j} G_{n,l,m,j}(x,t)\Big)
 \rho(t)\Big|dt dx.
\end{align}
Since $\Lambda_{S_0}(f,g)\geq \Lambda^+(f,g)$,   it is sufficient to prove
\begin{align}\label{Lambdagoal}
\Lambda_{S_0}(f,g) \leq C m |F_1|^{1/p_1}|F_2|^{1/p_2}|F_3|^{1-1/r}.
\end{align}
Now, we get rid of the effect of  translation caused by ${\bf tr}_j$ 
 in $\Lambda_T(f,g)$. Applying Fubini's Theorem to change the order of $dt$ and $dx$, then setting $u = x-2^{-j_l-lj}(t^l+Q_l(t))$ and finally 
 using Fubini again,  we have 
\begin{multline}
\Lambda_T(f,g) \!\!=\!\!\!\!\! \sum\limits_{j\in J^*_{l,+}(N)}\iint
 \Big|\sum\limits_{n\in T_j }{f}_{n,l,m,j}(u-\textbf{tr}_j(t))\rho(t)\Big|dt
 \Big|\sum\limits_{n\in T_j} g_{n,l,m,j}(u)\Big| du,
\end{multline}
where 
$$
f_{n,l,m,j}(x) =  \Id_{n,l,j}^*\psi_{j_l+j+m}f\ast\Phi_{j_l+j+m}(x)
$$
and
$$
g_{n,l,m,j}(x) =  \Id_{n,l,j}^*\psi_{j_l+lj+m}g\ast\Phi_{j_l+lj+m}(x).
$$
From  $j\in J^*_{l,+}(N)$ and $|t|\in (1/2,2)$,  we conclude 
\begin{equation}
2^{-j_l-j}|t| \geq 2^N2^{-j_l-lj}(t^l+Q_l(t)).
\end{equation}
Hence
$$
\int \Big|\sum\limits_{n\in T_j }f_{n,l,m,j}(x-\textbf{tr}_j(t))\rho(t)\Big|dt \leq CM(\sum\limits_{n\in T_j }f_{n,l,m,j})(x).
$$
Utilizing Cauchy-Schwarz inequality, we estimate $\Lambda_T(f,g)$ by 
\begin{align*}
C\bigg\|\bigg(\sum\limits_{j\in J^*_{l,+}(N)}\Big(M(\sum\limits_{n\in T_j }f_{n,l,m,j})(x)\Big)^2\bigg)^{1/2}
\bigg(\sum\limits_{j\in J^*_{l,+}(N)}\Big(\sum\limits_{n\in T_j }g_{n,l,m,j}(x)\Big)^2\bigg)^{1/2}\bigg\|_{L^1(\mathbb{R})}.
\end{align*}
By a use of H\"older's inequality,  $\Lambda_T(f,g)$ is bounded by
$$
C \bigg\|     \bigg(\sum\limits_{j\in J^*_{l,+}(N)}\Big(M(\sum\limits_{n\in T_j }f_{n,l,m,j})(x)\Big)^2\bigg)^{1/2}         \bigg\|_{p'_2}
\bigg\|\bigg(\sum\limits_{j\in J^*_{l,+}(N)}\Big(\sum\limits_{n\in T_j }g_{n,l,m,j}(x)\Big)^2\bigg)^{1/2}\bigg\|_{p_2}.
$$
Finally, Fefferman-Stein's Inequality yields
\begin{align}\label{***}
\Lambda_T(f,g) \leq C   \|S_{1,T}(f)\|_{p'_2}\|S_{2,T}(g)\|_{p_2},
\end{align}
where $S_{1, T}$ and $ S_{2, T}$ are square functions defined  by, respectively, 
\begin{equation}\label{defofSq1}
S_{1,T}(f)= \bigg(\sum\limits_{j\in J^*_{l,+}(N)}\bigg|\sum\limits_{n\in T_j }f_{n,l,m,j}(x)\bigg|^2\bigg)^{1/2}
\end{equation}
and
\begin{equation}\label{defofSq2}
S_{2,T}(g) = \bigg(\sum\limits_{j\in J^*_{l,+}(N)}\bigg|\sum\limits_{n\in T_j }g_{n,l,m,j}(x)\bigg|^2\bigg)^{1/2}.
\end{equation}

Although the definitions above are given for a tree $T$, we can extend them to the general square functions 
$S_{1, Z}$ and $S_{2, Z}$,  replacing  $T$ by any subset $Z$ of $S_0$ in (\ref{defofSq1}) and (\ref{defofSq2}), respectively.

\subsection{Sizes and BMO estimates}\q\q\q\\

%
%
%
%
%
%
%
%
%
Due to certain technical reasons, we need to define the sizes of a tree carefully. We begin with the previous definition of $\Id_{n,l,j}^*(x)$. Since $\theta$ is Schwartz function, for all positive integer $K$, there is an positive number $C_K$ depends only on $K$ and $\theta$ such that 
$$
|\theta_k(x)| \leq C_K\frac{2^k}{(1+2^k|x|)^K}.
$$
Thus
$$
\Id^*_{n,l,j}(x) \leq C_K \int_{I_{n,l,j}}\frac{2^{j_l+j+m}}{(1+2^{j_l+j+m}|x-y|)^{K}}dy
$$
and  
$$
\Id^*_{n,l,j}(x) \leq \frac{C_K} {(1+2^{j_l+j+m}{\rm dist}(x,I_{n,l,j}))^{K}}.
$$
Both upper bounds of $\Id^*_{n,l,j}(x) $ above will be useful. In what follows, the value of $K$ may be varied but it is completely harmless because all those $K$'s are absolute constants independent of the coefficients of $P$. Set
$$
\Id^{**}_{n,l,j}(x) = \int_{I_{n,l,j}}\frac{2^{j_l+j+m}}{(1+2^{j_l+j+m}|x-y|)^{K}}dy.
$$ 
After the above preparation, we are ready to define the sizes of a tree or a subset of $S_0$.   
\begin{definition}\label{size10} 
Let $T\subset S$ be a tree, the $1$-$ {  {\rm \bf {size}}}(T)$ and $2$-${ {{\rm{\bf size}}}}(T)$ are defined as
\begin{eqnarray*} 
 1\mbox{-}{\rm{\bf size}}  (T)   & =   & |I_T|^{-1/p_1} 
 \Bigg(  \bigg\|\bigg(\sum\limits_{(j,n)\in T}  \big| \Id^{**}_{n,l,j}\psi_{j_l+j+m}f\ast\Phi_{j_l+j+m}\big|^2\bigg)^{1/2}\bigg\|_{p_1}
  \\
 &  &  + \, \bigg\|\bigg(\sum\limits_{(j,n)\in T}\big|\Id^{**}_{n,l,j}\psi_{j_l+j+m}f\ast(D\Phi)_{j_l+j+m}\big|^2\bigg)^{1/2}\bigg\|_{p_1}
 \\
  & &+\,
 \bigg\|\bigg(\sum\limits_{(j,n)\in T}\big|\Id^{**}_{n,l,j}(D\psi)_{j_l+j+m}f\ast\Phi_{j_l+j+m}\big|^2\bigg)^{1/2}\bigg\|_{p_1}
 \Bigg)
\end{eqnarray*}
and 
\begin{flalign*}
 &\q\q2\mbox{-}{\rm{\bf size}}(T) =  |I_T|^{-1/p_2} 
 \bigg\|\bigg(\sum\limits_{(j,n)\in T}\big|\Id^{**}_{n,l,j}\psi_{j_l+lj+m}g\ast\Phi_{j_l+lj+m}\big|^2\bigg)^{1/2}\bigg\|_{p_2}.&
\end{flalign*}
Given an arbitrary subset $U\subset S_0$, the ${\rm{\bf size}}_1(U)$ and ${\rm{\bf size}}_2(U)$ are defined as 
\begin{equation}
{\rm{\bf size}}_1(U) = \sup\limits_{T\subset U} | 1\mbox{-}{\rm{\bf size}}(T)|
\end{equation}
and 
\begin{equation}
 {\rm{\bf size}}_2(U) = \sup\limits_{T\subset U} | 2\mbox{-}{\rm{\bf size}}(T)|,
 \end{equation}
 where $T\subset U$ is a tree.
\end{definition}

In the definition of $1$-${\rm{\bf size}}$, each of the summand is almost identical, the reason we make three "almost" identical copies is purely technical. The following inequalities  are consequences from the definition:
\begin{equation}\label{size111}
\|S_{1,T}(f)\|_{p_1} \leq {\rm{\bf size}}_1(T) \cdot |I_T|^{1/p_1}
\end{equation}
and 
\begin{equation}
\|S_{2,T}(g)\|_{p_2}\leq {\rm{\bf size}}_2(T)\cdot |I_T|^{1/p_2}.
\end{equation}
The sizes are also closely related to the maximal function:
\begin{lemma}\label{size1_Max}
For any tree $T \subset S$, we have the following estimates:
\begin{align}\label{1size}
1\mbox{-}{{\rm{\bf size}}}(T) \leq C \inf\limits_{x\in I_T} M_{p_1}f(x),
\end{align}  
\begin{align}\label{size1}
{\rm{\bf size}}_1(T) \leq C\min \{1, |F_1|/|F_3|\}^{1/p_1}
\end{align}  
\begin{equation}\label{2size}
2\mbox{-}{\rm{\bf size}}(T) \leq C\inf\limits_{x\in I_T} M_{p_2}g(x),
\end{equation}  
\begin{equation}\label{size2}
{\rm{\bf size}}_2(T) \leq C\min \{1, |F_2|/|F_3|\}^{1/p_2}.
\end{equation}  
\end{lemma}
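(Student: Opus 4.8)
The plan is to prove the four estimates in the following order: first \eqref{1size} and \eqref{2size}, which are the pointwise-by-maximal-function bounds for a single tree, and then deduce \eqref{size1} and \eqref{size2}, which are the uniform bounds over all trees inside the relevant truncated collections. The two pairs are handled by the same argument with $p_1,f,F_1$ replaced by $p_2,g,F_2$ (and the three ``almost identical'' summands in the definition of $1$-${\rm\bf size}$ are treated identically, since $D\Phi$, $D\psi$ are again Schwartz functions of the same type), so it is enough to describe the plan for \eqref{1size} and \eqref{size1}.

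For \eqref{1size}: fix a tree $T$ with top interval $I_T$, and fix $x_0\in I_T$. The quantity $1$-${\rm\bf size}(T)$ is $|I_T|^{-1/p_1}$ times an $L^{p_1}$-norm of a square function over the tiles $(j,n)\in T$. The first step is to observe that for $(j,n)\in T$ we have $I_{n,l,j}\subseteq I_T$, so each smoothed indicator $\Id^{**}_{n,l,j}$ is supported (up to rapid decay) near $I_T$, and $2^{j_l+j+m}\ge 2^{j_l+j_0+m}\gtrsim |I_T|^{-1}$ for the top scale $j_0$. Next I would split the integration region into the dilates $2^s I_T$, $s\ge 0$; on each shell the tails of $\Id^{**}_{n,l,j}$ contribute a factor decaying like $2^{-sK}$ with $K$ large, so it suffices to bound the contribution of $2^0 I_T$, i.e.\ of a bounded multiple of $I_T$, and sum a geometric series. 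On $CI_T$, I would apply H\"older with exponents $p_1$ and $p_1'$ against $\Id_{CI_T}$, then pull the square function estimate for the Littlewood--Paley pieces $f*\Phi_{j_l+j+m}$ (together with the cutoffs $\psi_{j_l+j+m}$, which are pointwise bounded by $1$) out of the sum; the standard vector-valued Littlewood--Paley / Fefferman--Stein machinery controls $\big\|(\sum_{(j,n)\in T}|\Id^{**}_{n,l,j}\psi f*\Phi|^2)^{1/2}\big\|_{L^{p_1}(CI_T)}$ by $\|f\|_{L^{p_1}(\widetilde{CI_T})}$-type quantities localized near $I_T$. Dividing by $|I_T|^{1/p_1}$ and recognizing that a localized $L^{p_1}$ average of $f$ over a neighbourhood of $I_T$ is $\lesssim \inf_{x\in I_T}M_{p_1}f(x)$ (where $M_{p_1}f=(M(|f|^{p_1}))^{1/p_1}$) gives \eqref{1size}.

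For \eqref{size1}: by definition ${\rm\bf size}_1(U)=\sup_{T\subset U}|1\mbox{-}{\rm\bf size}(T)|$ with $U=S_0$; by \eqref{1size} it is bounded by $\sup_{T}\inf_{x\in I_T}M_{p_1}f(x)\le \|M_{p_1}f\|_\infty$, which is $\lesssim 1$ since $f=\Id_{F_1}$ so $M_{p_1}f\le 1$ pointwise. This gives the factor $1$ in $\min\{1,|F_1|/|F_3|\}^{1/p_1}$. For the other factor, recall that the whole argument in this section takes place against $\Id_{F_3'}$ where $F_3'=F_3\setminus\Omega$ and $\Omega\supseteq\{M\Id_{F_1}>C|F_1|/|F_3|\}$; consequently, in the representation of $\Lambda_{S_0}(f,g)$ only tiles $(j,n)$ whose interval $I_{n,l,j}$ meets $\Omega^c$ survive, and for such a tile $\inf_{x\in I_{n,l,j}}M\Id_{F_1}(x)\lesssim |F_1|/|F_3|$, hence also $\inf_{x\in I_T}M_{p_1}\Id_{F_1}(x)\lesssim (|F_1|/|F_3|)^{1/p_1}$ for any tree $T$ built from surviving tiles. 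Combining with \eqref{1size} yields ${\rm\bf size}_1(T)\lesssim (|F_1|/|F_3|)^{1/p_1}$, and taking the supremum gives \eqref{size1}. The estimates \eqref{2size} and \eqref{size2} follow verbatim with $g,F_2,p_2$ in place of $f,F_1,p_1$, using the second half of the exceptional set $\Omega$.

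The main obstacle I anticipate is the bookkeeping in the first step: making precise that the rapidly decaying tails of $\Id^{**}_{n,l,j}$ (whose scale $2^{j_l+j+m}$ varies over the tree) can be summed against the shells $2^sI_T$ uniformly in the tree and in $m$, so that the whole square function is genuinely localized to a fixed dilate of $I_T$ with constants independent of the coefficients of $P$. Once that localization is in hand, the passage to the maximal function and then to the $\min\{1,|F_1|/|F_3|\}$ bound is routine Calder\'on--Zygmund theory.
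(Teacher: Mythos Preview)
Your proposal is correct and follows essentially the same route as the paper's own (brief, explicitly labelled ``standard'' and omitted) argument: localize the square function to a bounded dilate of $I_T$ using the rapid decay of $\Id^{**}_{n,l,j}$, apply the $L^{p}$ Littlewood--Paley square-function estimate to obtain \eqref{1size} and \eqref{2size}, and then invoke the definition of the exceptional set $\Omega$ for \eqref{size1} and \eqref{size2}.

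One small sharpening worth making explicit: the reason the bound $(|F_1|/|F_3|)^{1/p_1}$ in \eqref{size1} holds for \emph{every} tree, not only those whose top interval literally meets $\Omega^c$, is that the cutoffs $\psi_{j_l+j+m}$ are already built into the \emph{definition} of $1$-${\rm\bf size}$ (not merely into the trilinear form $\Lambda_{S_0}$). Since $\psi_{j_l+j+m}$ is a smoothing of $\Id_{\Omega_{j_l+j+m}^c}$ at scale $2^{-(j_l+j+m)}$, each summand in the size is effectively supported within $O(2^{-(j_l+j+m)})$ of $\Omega^c$, and at such points one can compare $|f*\Phi_{j_l+j+m}(x)|$ with $Mf(y)\le C|F_1|/|F_3|$ for a nearby $y\in\Omega^c$. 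With that reading your ``surviving tiles'' heuristic is exactly right, and the obstacle you flag (uniform summability of the tails over the shells $2^sI_T$) is indeed the only bookkeeping left; it is routine since the decay scale $2^{-(j_l+j+m)}$ of $\Id^{**}_{n,l,j}$ is always at least as fine as $|I_T|$.
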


The proof of Lemma \ref{size1_Max} is standard. It can be done by using $L^p$ estimate on the square function, 
a consequence of Littlewood-Paley theory.  Since $(j,n)\in T$,  the major contribution is from the function $f$ restricted to $2I_T$.
From this observation,  (\ref{1size}) and (\ref{2size}) follow.  (\ref{size1}) and (\ref{size2}) can be concluded by using (\ref{1size}), 
(\ref{2size}) and the definition of the exceptional set $\Omega$.  We omit the details.


We need two BMO estimates in order to sum $\Lambda_T(f,g)$ over all trees. The first one is a global control of 
$\|S_{1, Z}(f)\|_{BMO}$. Here $S_{1, Z}$ is the square function associated to a subset $Z$. 
In Lemma \ref{BMO1}, the tree structure is unimportant and we have the result for general subset $Z$. 
\begin{lemma}\label{BMO1}
 For $j\in J^*_{l,+}(N)$,  let $Z_j\in \mathbb{Z}$ be an arbitrary subset and any positive number $1<p<\infty$, there is a constant $C_p$ depending on $d$ and $p$ such that 
\begin{equation}\label{lp}
\|S_{1,Z}(f)\|_p \leq C_p \|f\|_p
\end{equation} 
and 
\begin{equation}\label{pBMO}
\|S_{1,Z}(f)\|_{BMO}  \leq C_p \min\{1, |F_1|/|F_3|\}^{1/p}.
\end{equation}
\end{lemma}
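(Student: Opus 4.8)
The plan is to establish both estimates in Lemma \ref{BMO1} by viewing $S_{1,Z}(f)$ as (a vector-valued piece of) a Littlewood--Paley square function, so that classical theory applies directly. First I would observe that each summand $\sum_{n\in Z_j} f_{n,l,m,j}(x)$ is, up to harmless smooth cutoffs, a single Littlewood--Paley block $\psi_{j_l+j+m}\, f*\Phi_{j_l+j+m}$ localized by $\sum_{n\in Z_j}\Id^*_{n,l,j}$. The key structural point is that $\sum_{n\in Z_j}\Id^*_{n,l,j}(x) = \Id_{E_j}*\theta_{j_l+j+m}(x)$ where $E_j = \bigcup_{n\in Z_j} I_{n,l,j}$ is a union of dyadic intervals at scale $2^{-j_l-j}$, so this factor is a smooth function bounded by $1$ together with all its rescaled derivatives; in particular multiplication by it is bounded on every $L^p$ and essentially preserves the frequency localization of $\Phi_{j_l+j+m}$ at scale $2^{j_l+j+m}$. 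Because the multiplicity of the scales $j_l+j+m$, $j\in J^*_{l,+}(N)$, is bounded (the map $j\mapsto j_l+j+m$ is injective on $\mathbb Z$), the collection $\{f*\Phi_{j_l+j+m}\}_{j}$ is (a subfamily of) a standard Littlewood--Paley decomposition, whence \eqref{lp} follows from the $L^p$ boundedness of the square function together with the $L^p$-boundedness of the cutoff multipliers $\psi_{j_l+j+m}$ and $\sum_n\Id^{**}_{n,l,j}$ (each uniformly bounded, independent of the coefficients of $P$).

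For the BMO bound \eqref{pBMO} I would argue as follows. The estimate $\|S_{1,Z}(f)\|_{BMO}\le C_p$ (i.e.\ the bound $\min\{1,\cdot\}^{1/p}\le 1$) is the classical fact that a Littlewood--Paley square function maps $L^\infty\to BMO$; since $f=\Id_{F_1}$ is bounded by $1$, this gives the first half. For the improvement by the factor $(|F_1|/|F_3|)^{1/p}$ when $|F_1|<|F_3|$, I would unwind the $BMO$ norm: fix a cube (interval) $Q$; one must bound the mean oscillation of $S_{1,Z}(f)$ over $Q$. Split $f = f\Id_{3Q} + f\Id_{(3Q)^c}$. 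The local part contributes, via \eqref{lp} with exponent $p$ and Hölder, at most $C_p |Q|^{-1/p}\|f\Id_{3Q}\|_p = C_p(|F_1\cap 3Q|/|Q|)^{1/p}\le C_p(|F_1|/|Q|)^{1/p}$; but here one uses that on $Q$ we are testing against points avoiding the exceptional set, so the Hardy--Littlewood maximal function of $\Id_{F_1}$ is $\lesssim |F_1|/|F_3|$ on the relevant region, which forces $|F_1\cap 3Q|/|Q|\lesssim |F_1|/|F_3|$ for the cubes that matter, giving the gain. The far part is controlled by the usual Schwartz tail / Littlewood--Paley kernel decay estimates: the pieces $f*\Phi_{j_l+j+m}$ with $f$ supported away from $3Q$ produce, after subtracting the average over $Q$, rapidly decaying contributions summable in $j$, again dominated by $|F_1|/|F_3|$ since only scales larger than $|Q|$ survive and they see $f$ only through its total mass relative to the exceptional set. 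Summing the $\ell^2$ combination over $j$ (finite multiplicity again) closes the estimate.

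**The main obstacle** I anticipate is not the square-function machinery itself but keeping the bounds \emph{uniform in the coefficients of $P$}, i.e.\ making sure the constants $C_p$ never secretly depend on $j_l$ or on $a_l$. This requires being careful that every multiplier operator introduced --- the Littlewood--Paley projections $\Phi_{j_l+j+m}$, the smoothed cutoffs $\psi_{j_l+j+m}$, the localizers $\Id^{**}_{n,l,j}$ --- has symbol/ kernel bounds depending only on the fixed Schwartz profiles $\Phi$, $\theta$, $\tilde\psi$ and on $p$, together with the fact (already established via Lemma \ref{keyL}-type reasoning) that the exponents $j_l+j+m$ range over a set on which the scale map is injective with bounded overlap. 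Once that bookkeeping is in place, \eqref{lp} is immediate and \eqref{pBMO} follows by the standard $L^\infty\to BMO$ argument refined by the support information coming from the exceptional set $\Omega$; I would present the refinement explicitly since it is exactly the point where the quantitative factor $\min\{1,|F_1|/|F_3|\}^{1/p}$ enters, and defer the purely routine kernel-tail estimates to a remark.
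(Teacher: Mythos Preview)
Your argument for \eqref{lp} is essentially the paper's: the point is that $\big|\sum_{n\in Z_j}\Id^*_{n,l,j}\,\psi_{j_l+j+m}\big|\le C$ uniformly, so $S_{1,Z}$ is dominated pointwise by a standard Littlewood--Paley square function, and the injectivity of $j\mapsto j_l+j+m$ gives bounded overlap of scales. That part is fine.

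For \eqref{pBMO} there is a real gap in how you obtain the factor $\min\{1,|F_1|/|F_3|\}^{1/p}$. The $BMO$ norm is a supremum over \emph{all} intervals $Q$; there is no ``testing against points avoiding the exceptional set''. Your sentence ``on $Q$ we are testing against points avoiding the exceptional set, so \dots\ $|F_1\cap 3Q|/|Q|\lesssim |F_1|/|F_3|$ for the cubes that matter'' is not a valid deduction: a generic interval $Q$ may sit deep inside $\Omega$, where $|F_1\cap 3Q|/|Q|$ can be as large as $1$. The gain is \emph{not} inherited from $F_3'$; it comes from the factor $\psi_{j_l+j+m}$ already built into $f_{n,l,m,j}$. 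Recall that $\psi_{j_l+j+m}=\Id_{\Omega_{j_l+j+m}^c}*\tilde\psi_{j_l+j+m}$ is essentially supported within distance $C\,2^{-(j_l+j+m)}$ of $\Omega^c$. Thus for a given scale $j$ with $2^{-(j_l+j+m)}\le |Q|$, either $\psi_{j_l+j+m}\equiv 0$ on $Q$ (and there is nothing to bound), or some point of $\Omega^c$ lies within $C|Q|$ of $Q$; only then can one invoke $M\Id_{F_1}\le C|F_1|/|F_3|$ at that nearby point to control $|F_1\cap 3Q|/|Q|$. The paper uses exactly this mechanism (the appearance of $\|\psi_{j_l+j+m}\|_{L^\infty(Q)}$ in the local estimate, and the choice of $z\in\Omega^c$ with ${\rm dist}(x,\Omega^c)\sim|x-z|$ in the far estimate). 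Your write-up needs to make this role of $\psi$ explicit; without it the argument does not close.

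A second, smaller point: you do not separate scales $2^{-(j_l+j+m)}\le|Q|$ from $2^{-(j_l+j+m)}>|Q|$. For the latter, neither the local/far splitting of $f$ nor the $\psi$-support argument helps directly, because $\psi_{j_l+j+m}$ can be $\sim 1$ on all of $Q$ and $f\Id_{3Q}*\Phi_{j_l+j+m}$ need not be small. The paper handles these large scales by the Poincar\'e inequality: it bounds the oscillation of $\sum_{2^{-(j_l+j+m)}>|Q|}|\cdots|^2$ over $Q$ by $|Q|\cdot\|D(\cdots)\|_{L^1(Q)}$, and the derivative produces a factor $2^{j_l+j+m}$ which sums geometrically over those $j$. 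You could alternatively subtract the value at the center of $Q$ and use the mean-value theorem, which is the more classical route you seem to have in mind; but this has to be said and carried out, since the $\ell^2$ sum in $j$ must still be shown to converge with the correct bound.
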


Taking $p=p_1$ in this lemma and then interpolating with (\ref{size111}), we get
\begin{equation}\label{interpolation1}
\|S_{1,T}(f)\|_{p_2'} \leq C|I_T|^{1/p'_2}{\rm{\bf size}}_1(T)^{p_1/p_2'} {\rm min}\{1, |F_1|/|F_3|\}^{1/p_1-1/p'_2}
\end{equation}

The second BMO estimate is localized to a single tree.
\begin{lemma}\label{BMO2}
Let $T\subset S$ be a tree, then 
\begin{equation}
\Big\|\Big(\sum\limits_{(j,n)\in T}\big|\Id^{**}_{n,l,j}\psi_{j_l+j+m}f\ast\Phi_{j_l+j+m}\big|^2\Big)^{1/2}\Big\|_{BMO} \leq C 2^{m} {\rm{\bf size}}_1(T).
\end{equation}
\end{lemma}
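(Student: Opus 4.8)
The plan is to reduce the $BMO$ bound on the square function $\big(\sum_{(j,n)\in T}|\Id^{**}_{n,l,j}\psi_{j_l+j+m}f*\Phi_{j_l+j+m}|^2\big)^{1/2}$ to a weighted vector-valued $L^2$ estimate over an arbitrary fixed interval $I$, following the standard recipe: to control the $BMO$ norm one fixes an interval $I$, subtracts a suitable constant $c_I$, and shows
$\frac{1}{|I|}\int_I \big(\sum_{(j,n)\in T}|\Id^{**}_{n,l,j}\psi_{j_l+j+m}f*\Phi_{j_l+j+m}|^2\big)^{1/2}\,dx \le C2^m\,{\rm{\bf size}}_1(T)$,
where after Cauchy--Schwarz it suffices to bound the $L^2(I)$-average of the square of the square function (this is where a square root appears, which is why only an $L^2$-average, not the genuine $BMO$ oscillation, is needed for the Fefferman-type characterization). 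The key structural fact is that $I_T$ is the minimal top interval, so there are two regimes: intervals $I$ with $|I|\ge |I_T|$ and intervals $I$ contained in $I_T$. For $|I|\gtrsim |I_T|$ the quantity is controlled directly by $1\mbox{-}{\rm{\bf size}}(T)$ via \eqref{size111}-type reasoning together with the John--Nirenberg-type passage from the $L^{p_1}$ bound. For $I\subsetneq I_T$ one splits $f=f_{\rm near}+f_{\rm far}$ where $f_{\rm near}=f\Id_{CI}$ on a fixed dilate of $I$ and $f_{\rm far}$ is the rest.

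For $f_{\rm near}$ one uses the Littlewood--Paley / vector-valued square function bound (the $L^{p}$ inequality \eqref{lp} of Lemma \ref{BMO1}, applied with $p=2$, or its weighted analogue) to get $\|S_{1,T}(f_{\rm near})\|_{L^2}\lesssim \|f_{\rm near}\|_{L^2}\lesssim |CI|^{1/2}\inf_{x\in I}M_2 f(x)$, and then \eqref{1size} converts this into a bound by $|I|^{1/2}\cdot 1\mbox{-}{\rm{\bf size}}(T)$ — no loss of $2^m$ here. The factor $2^m$ enters only through $f_{\rm far}$: for the tail piece the Schwartz tails of $\theta_{j_l+j+m}$ (entering $\Id^{**}_{n,l,j}$) and of $\Phi_{j_l+j+m}$ must be summed against the distance from $I$ to the tiles, and because the frequency scale is $2^{j_l+j+m}$ rather than $2^{j_l+j}$, the "natural" length scale attached to each term is $2^{-j_l-j-m}$ while the spatial tiles $I_{n,l,j}$ have length $2^{-j_l-j}$; this scale mismatch of $2^m$ is exactly the source of the $2^m$ in the statement. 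Concretely, I would bound $|\Id^{**}_{n,l,j}(x)|$ for $x\in I$, $\dist(x,I_{n,l,j})$ large, by a rapidly decreasing function of $2^{j_l+j+m}\dist$, pull $f_{\rm far}$ out in $L^\infty$-on-$2I_T$/$L^{p_1}$ norm, sum the geometric-type series over $n$ and over $j\in J^*_{l,+}(N)$ (using that $\#J^*_{l,+}(N)$ is uniformly bounded by Lemma \ref{keyL}, and the orthogonality in $j$ from disjoint frequency supports), and collect a single factor $2^m$ from the worst term. Then invoke \eqref{size1}/\eqref{1size} to replace the resulting $f$-norm over $2I_T$ by $1\mbox{-}{\rm{\bf size}}(T)$.

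The main obstacle I anticipate is the careful bookkeeping in the $f_{\rm far}$ estimate: one must simultaneously exploit (a) the frequency-localization orthogonality in $j$, (b) the spatial almost-orthogonality of the tiles $I_{n,l,j}$ for fixed $j$, and (c) the Schwartz decay of $\theta$ and $\Phi$ at the finer scale $2^{-j_l-j-m}$, and show that the combined sum produces exactly one power of $2^m$ and not more — i.e. that the decay wins on every scale except the single critical one. A secondary technical point is the presence of the derivative variants $f*(D\Phi)_{j_l+j+m}$ and $(D\psi)_{j_l+j+m}f*\Phi_{j_l+j+m}$ hidden inside $1\mbox{-}{\rm{\bf size}}(T)$: since we are only asked to bound the first (non-derivative) square function in $BMO$, these only appear on the right-hand side through ${\rm{\bf size}}_1(T)$, so no new difficulty arises, but one should remark that the same argument with $\Phi$ replaced by $D\Phi$ or with $\psi$ replaced by $D\psi$ goes through verbatim, which is implicitly what the three-fold definition of $1\mbox{-}{\rm{\bf size}}$ is designed to accommodate. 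I would close by remarking that the $2^m$ loss here is harmless because it is eventually beaten by the $2^{-\epsilon m}$ gain from Proposition \ref{L1} after interpolation (Theorem \ref{below11}).
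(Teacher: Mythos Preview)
The paper does not supply a proof of Lemma~\ref{BMO2}; it only declares the argument to be ``a standard BMO-estimate, similar to (\ref{pBMO})'' and refers back to the proof of Lemma~\ref{BMO1}. Your outline follows that template (fix a test interval, split scales, handle a local piece and a tail piece), and your identification of the $2^m$ loss as coming from the mismatch between the tile length $|I_{n,l,j}|=2^{-j_l-j}$ and the smoothing scale $2^{-j_l-j-m}$ is on point. Two steps in your write-up, however, are wrong as stated.

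First, Lemma~\ref{keyL} bounds $\#J_{\rm good}(N)$, not $\#J^*_{l,+}(N)$. The set $J_l(N)$ (and hence $J^*_{l,+}(N)$) sits inside $J_{\rm bad}(N)$ and is typically infinite; that is precisely why Section~\ref{rnot1} needs the square-function/orthogonality machinery rather than counting. You do mention ``orthogonality in $j$ from disjoint frequency supports'' in the same sentence, which is the correct mechanism; the parenthetical appeal to Lemma~\ref{keyL} should simply be deleted.

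Second, and more seriously, your treatment of the near piece uses \eqref{1size} in the wrong direction. You write $\|f_{\rm near}\|_{L^2}\lesssim |CI|^{1/2}\inf_{I} M_2 f$ and then claim \eqref{1size} ``converts'' this into $|I|^{1/2}\cdot 1\mbox{-}{\rm{\bf size}}(T)$. But \eqref{1size} says $1\mbox{-}{\rm{\bf size}}(T)\le C\inf_{I_T} M_{p_1}f$, so the size lies \emph{below} the maximal function, not above; there is no reason $M_2 f$ on a small $I$ should be controlled by ${\rm{\bf size}}_1(T)$ (take a tree with very few tiles). The correct mechanism is not a near/far split of $f$ but a split of the \emph{tree}: the tiles $(j,n)\in T$ with $I_{n,l,j}\subset CI$ form a subtree $T'$ with $|I_{T'}|\lesssim |I|$, and the definition of ${\rm{\bf size}}_1(T)$ as a supremum over subtrees gives directly
\[
\Big\|\Big(\sum_{(j,n)\in T'}\big|\Id^{**}_{n,l,j}\psi_{j_l+j+m}f\ast\Phi_{j_l+j+m}\big|^2\Big)^{1/2}\Big\|_{p_1}\le |I_{T'}|^{1/p_1}{\rm{\bf size}}_1(T),
\]
whence the $L^1(I)$-average bound by H\"older. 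The remaining tiles (those with $I_{n,l,j}\not\subset CI$, in particular the coarse scales $|I_{n,l,j}|>|I|$) are then handled, as you anticipate, by the Schwartz tails of $\Id^{**}_{n,l,j}$ at the fine scale $2^{-j_l-j-m}$ together with the Poincar\'e/derivative argument modeled on the $I_2$-portion of the proof of Lemma~\ref{BMO1}; this is where the single factor $2^m$ is paid.
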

We provide the proof of Lemma \ref{BMO1} in section \ref{P_BMO1} and omit the proof Lemma \ref{BMO2} because it is a standard BMO-estimate, similar to (\ref{pBMO}). Interpolating the above lemma with (\ref{size111}), we obtain 
\begin{equation}
\bigg\|\bigg(\sum\limits_{(j,n)\in T}\big|\Id^{**}_{n,l,j}\psi_{j_l+j}f\ast\Phi_{j_l+j+m}\big|^2\bigg)^{1/2}\bigg\|_{p_2'} \leq C 2^{m(1-p_1/p'_2)} {\rm{\bf size}}_1(T)|I_T|^{1/p'_2}.
\end{equation}
Thus 
\begin{equation}\label{interpolation2}
\|S_{1,T}(f)   \|_{p'_2}\leq C2^{m(1-p_1/p'_2)} {\rm{\bf size}}_1(T)|I_T|^{1/p'_2}.
\end{equation}

Combining (\ref{interpolation1}) and (\ref{interpolation2}), we have:

\begin{lemma}\label{interpolation}
\begin{equation}\label{size*}
 \|S_{1,T}(f)\|_{p'_2}\leq
 C|I_T|^{1/p'_2}{\rm{\bf size}}_1^*(T),
\end{equation}
where
\begin{align*}
{\rm{\bf size}}_1^*(S) = \min
\Big\{2^{m(1-p_1/p'_2)} {\rm{\bf size}}_1(S),
{\rm{\bf size}}_1(S)^{p_1/p_2'} \min\{1, |F_1|/|F_3|\}^{1/p_1-1/p'_2}
\Big\}.
\end{align*}
\end{lemma}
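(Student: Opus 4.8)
The final statement, Lemma~\ref{interpolation}, asks for nothing more than combining the two bounds that precede it, namely \eqref{interpolation1} and \eqref{interpolation2}. So the plan is essentially bookkeeping: both estimates have the form $\|S_{1,T}(f)\|_{p_2'}\le C|I_T|^{1/p_2'}\cdot(\textrm{something})$, and I would simply take the minimum of the two ``something'' factors, which is exactly the definition of $\textbf{size}_1^*(T)$.

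In more detail, I would first recall \eqref{interpolation1}, which gives
$$
\|S_{1,T}(f)\|_{p_2'}\le C|I_T|^{1/p_2'}\,\textbf{size}_1(T)^{p_1/p_2'}\min\{1,|F_1|/|F_3|\}^{1/p_1-1/p_2'},
$$
obtained by interpolating the $L^{p_1}$ square-function bound \eqref{lp} of Lemma~\ref{BMO1} against the trivial size bound \eqref{size111}. Then I would recall \eqref{interpolation2}, namely
$$
\|S_{1,T}(f)\|_{p_2'}\le C\,2^{m(1-p_1/p_2')}\,\textbf{size}_1(T)\,|I_T|^{1/p_2'},
$$
which comes from interpolating the BMO estimate of Lemma~\ref{BMO2} against \eqref{size111}. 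Since both inequalities hold simultaneously, the left-hand side is bounded by $C|I_T|^{1/p_2'}$ times the minimum of the two coefficients, and that minimum is precisely $\textbf{size}_1^*(T)$ as defined in the statement (with $S=T$). This yields \eqref{size*}.

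The only genuine ``content'' here was already front-loaded into Lemmas~\ref{BMO1} and~\ref{BMO2} and into the interpolation arguments producing \eqref{interpolation1}--\eqref{interpolation2}; there is no real obstacle remaining in Lemma~\ref{interpolation} itself. If anything needs a word of care, it is that the interpolation in \eqref{interpolation1} is a genuine interpolation of sublinear square-function operators between a strong $L^{p_1}$ estimate and a $BMO$ estimate (so one uses a Stampacchia/Fefferman--Stein-type interpolation with $BMO$ as an endpoint, and the exponents $1/p_1$, $1/p_2'$ combine as the convexity weights), and one should check that $p_2'\ge p_1$ so the interpolation makes sense in the right direction; this holds in the regime $\tfrac12<r<1$, $1<p_1,p_2<2$ of Proposition~\ref{major1}. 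Granting those points, the proof of Lemma~\ref{interpolation} is one line: take the pointwise-in-hypotheses minimum of \eqref{interpolation1} and \eqref{interpolation2}.
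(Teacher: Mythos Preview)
Your proposal is correct and matches the paper's approach exactly: the paper derives Lemma~\ref{interpolation} simply by writing ``Combining (\ref{interpolation1}) and (\ref{interpolation2}), we have'' and then stating the result, so it is indeed just taking the minimum of the two previously established bounds. One small slip in your commentary: (\ref{interpolation1}) is obtained by interpolating the $BMO$ bound (\ref{pBMO}) (with $p=p_1$) against the $L^{p_1}$ size bound (\ref{size111}), not by using the $L^p$ estimate (\ref{lp}); this does not affect the proof of Lemma~\ref{interpolation} itself, which only needs (\ref{interpolation1}) and (\ref{interpolation2}) as black boxes.
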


\subsection{Completion of the proof}\q\q\q\q\q
\\

To finish the proof of Proposition \ref{major1}. We need the following lemmas. 

\begin{lemma}\label{remove00}
Let $k=1$ or $2$, $T\subset S_0$ a tree and $P\subset S_0$ a subset. Suppose that $T\cap P =\emptyset$ and $T$ is a maximal tree in $P\cup T$, then we have 
\begin{align}\label{remove}
|\Lambda_{P\cup T}(f,g)-\Lambda_{P}(f,g)-\Lambda_{T}(f,g)| \leq C \, {\rm{\bf size}}_1^*(P\cup T) {\rm{\bf size}}_2(P\cup T) |I_T|.
\end{align}
\end{lemma}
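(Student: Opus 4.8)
The plan is to expand each of the three terms $\Lambda_{P\cup T}$, $\Lambda_P$, $\Lambda_T$ using the definition (\ref{alltiles}) and isolate the cross terms that do not cancel. Recall that, after the change of variables $u=x-2^{-j_l-lj}(t^l+Q_l(t))$ carried out in the proof of Proposition \ref{major0}, each $\Lambda_U(f,g)$ is controlled by an integral of $\bigl(\sum_{n\in U_j}Mf_{n,l,m,j}\bigr)\bigl(\sum_{n\in U_j}g_{n,l,m,j}\bigr)$, summed in $j$. Since $(P\cup T)_j = P_j\sqcup T_j$ is a disjoint union for each $j$ (because $P\cap T=\emptyset$), when we expand the product $\bigl(\sum_{n\in (P\cup T)_j}\cdots\bigr)\bigl(\sum_{n\in (P\cup T)_j}\cdots\bigr)$ the "diagonal" contributions reproduce $\Lambda_P$ and $\Lambda_T$ exactly, and the left-hand side of (\ref{remove}) is bounded by the two mixed terms
\[
\sum_{j}\int\Bigl|\sum_{n\in T_j}f_{n,l,m,j}\Bigr|\,\Bigl|\sum_{n\in P_j}g_{n,l,m,j}\Bigr| + \sum_j\int\Bigl|\sum_{n\in P_j}f_{n,l,m,j}\Bigr|\,\Bigl|\sum_{n\in T_j}g_{n,l,m,j}\Bigr|,
\]
up to the translation-removal step (Fefferman–Stein, Cauchy–Schwarz, Hölder) already used for Proposition \ref{major0}. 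So it suffices to bound each mixed term by $C\,{\rm{\bf size}}_1^*(P\cup T)\,{\rm{\bf size}}_2(P\cup T)\,|I_T|$.

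The key point is that $T$ is a \emph{maximal} tree in $P\cup T$ with top datum $(j_0,n_0)$ and top interval $I_T$. In the first mixed term the $f$-factor is supported (essentially, up to Schwartz tails of $\Id^{**}_{n,l,j}$) on $I_T$, since every $(j,n)\in T$ has $I_{n,l,j}\subseteq I_T$; thus by Hölder on $I_T$ (with exponents $p_2'$ and $p_2$) this term is bounded by $\|S_{1,T}(f)\|_{p_2'}\,\|S_{2,P\cup T}(g)\|_{L^{p_2}(I_T)}$ — for this I would invoke Lemma \ref{interpolation} for the first factor, giving $\|S_{1,T}(f)\|_{p_2'}\le C|I_T|^{1/p_2'}{\rm{\bf size}}_1^*(P\cup T)$, and the definition of $2$-${\rm{\bf size}}$ together with maximality of $T$ to bound the $g$-factor localized to $I_T$ by $C|I_T|^{1/p_2}{\rm{\bf size}}_2(P\cup T)$. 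Multiplying, $|I_T|^{1/p_2'}\cdot|I_T|^{1/p_2}=|I_T|$, which is the claimed bound. The second mixed term is symmetric: now the $g$-factor localizes to $I_T$ (via $\Id^{**}_{n,l,j}$ for $(j,n)\in T$), one applies Hölder on $I_T$ with exponents $p_2$ and $p_2'$, bounds the $g$-factor by $\|S_{2,T}(g)\|_{p_2}\le C|I_T|^{1/p_2}{\rm{\bf size}}_2(P\cup T)$ via (\ref{size111})-type inequalities, and bounds $\|S_{1,P\cup T}(f)\|_{L^{p_2'}(I_T)}$ by $C|I_T|^{1/p_2'}{\rm{\bf size}}_1^*(P\cup T)$ again using Lemma \ref{interpolation} (the same estimate works for $P\cup T$ restricted to $I_T$ since we only need the square-function bound on the single interval $I_T$).

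The main obstacle, and the place where care is needed, is the rigorous treatment of the Schwartz tails: the functions $\Id^{**}_{n,l,j}$ are not compactly supported, so the $f$- (resp.\ $g$-) factor associated to $T$ is not literally supported on $I_T$, and neither is the localization to $I_T$ of the $P\cup T$ factor exact. I would handle this exactly as in the standard time–frequency machinery: split $I_T$ into the part near $I_T$ and dyadic annuli $2^{s}I_T\setminus 2^{s-1}I_T$, use the rapid decay $(1+2^{j_l+j+m}{\rm dist}(x,I_{n,l,j}))^{-K}$ with $K$ large to gain a factor $2^{-s(K-1)}$ on each annulus, and absorb the resulting geometric series into the constant $C$; the measure of $2^s I_T$ contributes $2^s|I_T|$ which is defeated by $2^{-s(K-1)}$. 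One also has to check that replacing the full sum $\sum_{n\in(P\cup T)_j}$ by the two diagonal blocks plus the two mixed blocks is exact — this is immediate from bilinearity once the absolute values are expanded, and the only inequality lost is $|a+b|\le|a|+|b|$ which is harmless. Everything else is a bookkeeping reprise of the estimates already established in Proposition \ref{major0} and Lemmas \ref{size1_Max}, \ref{BMO1}, \ref{interpolation}.
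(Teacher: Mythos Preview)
Your reduction to the two mixed terms is correct and matches the paper. Where your argument diverges from the paper, and where I see a real gap, is in how you estimate those mixed terms.

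You apply Cauchy--Schwarz in $j$ and then H\"older globally on $I_T$, arriving at a product like $\|S_{1,T}(f)\|_{p_2'}\cdot\|S_{2,P}(g)\|_{L^{p_2}(I_T)}$, and you assert that the second factor is $\le C|I_T|^{1/p_2}\,{\rm{\bf size}}_2(P\cup T)$ ``by the definition of $2$-${\rm{\bf size}}$ together with maximality of $T$.'' But this bound is not available: the definition of ${\rm{\bf size}}_2$ controls the square function only over a \emph{tree}, and $P$ restricted to (a neighborhood of) $I_T$ need not be a tree with top comparable to $I_T$. Maximality of $T$ does force the intervals $I_{n,l,j}$ for $(j,n)\in P$ to be disjoint from the interior of $I_T$, but at every scale $j$ there can be tiles of $P_j$ adjacent to $\partial I_T$ whose Schwartz tails reach into $I_T$; summing those contributions over all $j$ is exactly the nontrivial step, and your Schwartz--tail annulus argument on dilates $2^s I_T$ only handles decay \emph{away} from $I_T$, not the accumulation at $\partial I_T$ from inside. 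The same issue arises for $\|S_{1,P\cup T}(f)\|_{L^{p_2'}(I_T)}$ in the second mixed term.

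The paper avoids this by \emph{not} taking Cauchy--Schwarz in $j$. Instead it applies H\"older at each fixed $j$ on each interval $I$ of length $2^{-j_l-j}$, producing on each piece a factor $|I|\cdot d_j(5I,P_j)\,d_j(I,T_j)$ (where $d_j$ encodes Schwartz decay between $I$ and the relevant tile set), together with ${\rm{\bf size}}_1^*\cdot{\rm{\bf size}}_2$. The whole burden then rests on the combinatorial estimate
\[
\sum_{j>0}\sum_{|I|=2^{-j_l-j}}|I|\,d_j(5I,P_j)\,d_j(I,T_j)\le C|I_T|,
\]
which is Lemma~\ref{claim1}. That lemma in turn uses the bound $\sum_j \alpha_j\,2^{-j_l-j}\le C|I_T|$ on the number $\alpha_j$ of connected components of $T_j$ (a standard but nontrivial fact from the tree machinery, see \cite{mu2002}, \cite{li2008U}). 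This is the missing idea in your sketch: the disjointness $T_j\cap P_j=\emptyset$ must be exploited scale by scale, and the resulting boundary contributions must be summed via the components count, not absorbed into a global square-function bound.
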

The proof  of Lemma \ref{remove00} is presented in Section \ref{P_remove00}. 
\begin{lemma}\label{organize}
Let $S\subset S_0$, for $k=1$ and $k=2$, $S$ can be partitioned into two parts $S_1$ and $S_2$ such that $S_1= \bigcup\limits_{T\in \mathcal F} T$ is a union of maximal trees with 
\begin{equation}\label{foruse0}
\bigcup\limits_{T\in \mathcal F} |I_T| \leq C |F_k|/{\rm{\bf size}}_k(S)^{p_k}
\end{equation} 
and 
\begin{equation}\label{foruse}
{\rm{\bf size}}_k(S_2) \leq \left(\frac{1}{2}\right)^{1/p_k} {\rm{\bf size}}_k (S), 
\end{equation}
where $C$ is the an absolute constant coming from the weak $(1, 1)$ norm of  the Hardy-Littlewood maximal function. 
\end{lemma}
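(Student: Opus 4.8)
The plan is to run a greedy selection of maximal trees, stopping once the size has dropped by the desired factor, and to control the total tops via the weak-$(1,1)$ inequality for the Hardy--Littlewood maximal function. I will only write out the case $k=1$; the case $k=2$ is identical with $f,p_1,F_1$ replaced by $g,p_2,F_2$ throughout. Set $\si = {\rm{\bf size}}_1(S)$. If $\si = 0$ there is nothing to do, so assume $\si>0$. I select trees inductively: having chosen $T^{(1)},\dots,T^{(i-1)}$, look at the remaining set $S\setminus\bigcup_{\nu<i}T^{(\nu)}$; if its ${\rm{\bf size}}_1$ is already $\le (1/2)^{1/p_1}\si$ we stop, otherwise there is a tree $T'$ inside it with $|1\mbox{-}{\rm{\bf size}}(T')| > (1/2)^{1/p_1}\si$, and I take $T^{(i)}$ to be a \emph{maximal} such tree (maximal with respect to inclusion of tops $I_T$, which exists since the relevant intervals are dyadic-type and their lengths $2^{-j_l-j}$ are bounded below on $J_l^*(N)$, so among all trees with that size bound we can pick one whose top is not properly contained in the top of another). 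Let $\mathcal F = \{T^{(i)}\}$ and $S_1 = \bigcup_{i}T^{(i)}$, $S_2 = S\setminus S_1$. By construction \eqref{foruse} holds, since the process only stops when the size of the leftover has dropped below $(1/2)^{1/p_1}\si$.

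It remains to prove the packing bound \eqref{foruse0}, i.e. $\sum_{T\in\mathcal F}|I_T| \le C|F_1|/\si^{p_1}$. For each selected tree $T = T^{(i)}$ we have, by the definition of $1\mbox{-}{\rm{\bf size}}$ and the lower bound $|1\mbox{-}{\rm{\bf size}}(T)| > (1/2)^{1/p_1}\si$,
\begin{equation*}
|I_T|^{1/p_1}\,\si \;\le\; C\,\bigg\|\bigg(\sum_{(j,n)\in T}\big|\Id^{**}_{n,l,j}\psi_{j_l+j+m}f\ast\Phi_{j_l+j+m}\big|^2\bigg)^{1/2}\bigg\|_{p_1},
\end{equation*}
(together with the analogous estimates for the two other summands in $1\mbox{-}{\rm{\bf size}}$, which are handled in exactly the same way). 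Raising to the $p_1$ power and summing over $T\in\mathcal F$, the left side is $\si^{p_1}\sum_T|I_T|$; on the right side, because the tops $I_T$ have bounded overlap (indeed one can pass to a disjoint subcollection by the maximality of the trees and a standard selection, losing only an absolute constant) and because $\psi\le M\Id_\Omega\le 1$, one bounds $\sum_T \|\cdots\|_{p_1}^{p_1}$ by $C\|S_{1,S}(f)\|_{p_1}^{p_1}$, which by the Littlewood--Paley estimate \eqref{lp} is at most $C\|f\|_{p_1}^{p_1} = C|F_1|$. This gives $\si^{p_1}\sum_{T\in\mathcal F}|I_T|\le C|F_1|$, which is \eqref{foruse0}.

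The main obstacle is the overlap/disjointification step: making precise that the tops $\{I_T : T\in\mathcal F\}$ of the greedily chosen maximal trees can be taken (after extracting a subcollection with comparable total length) essentially disjoint, so that the square functions of the individual trees add up to at most a constant times the global square function rather than accumulating a logarithmic or worse loss. This is the usual point where one invokes the weak-$(1,1)$ bound for $M$: one realizes each $I_T$ as a region where a suitable average of $|f|^{p_1}$ (or the square function) exceeds a threshold proportional to $\si^{p_1}$, and the weak-$(1,1)$ inequality then controls $\sum_T|I_T|$ directly — this is precisely the source of the absolute constant $C$ advertised in the statement. Everything else (existence of maximal trees, the stopping-time structure, and \eqref{foruse}) is routine once the notation is unwound.
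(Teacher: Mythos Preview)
Your greedy selection of maximal trees and the verification of \eqref{foruse} match the paper's argument exactly. However, your first attempt at \eqref{foruse0} --- summing the $L^{p_1}$-norms of the per-tree square functions and bounding by the global square function --- has a real gap: even if the tops $I_T$ are pairwise disjoint, the functions $S_{1,T}(f)$ are \emph{not} supported on $I_T$ (they carry tails coming from $\Id^{**}_{n,l,j}$), and for $p_1\neq 2$ there is no obvious way to pass from $\sum_T \|S_{1,T}(f)\|_{p_1}^{p_1}$ to $C\|S_{1,S}(f)\|_{p_1}^{p_1}$. For $p_1=2$ this does work by disjointness of the tiles, but the lemma is needed for general $1<p_k<\infty$, so this route does not close.

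The correct route is the one you sketch in your final paragraph, and it is precisely what the paper does. Use Lemma~\ref{size1_Max}, i.e.\ $k\mbox{-}{\rm{\bf size}}(T)\le C\inf_{x\in I_T}M_{p_k}f_k(x)$, so that each selected top $I_T$ is contained in the superlevel set $\{x:M_{p_k}f_k(x)>c\,\si\}$. Next, the tops $\{I_T\}_{T\in\mathcal F}$ are genuinely \emph{pairwise disjoint}, not merely of bounded overlap: if $I_{T^{(i)}}$ and $I_{T^{(i')}}$ with $i'<i$ were nested, then $T^{(i)}\cup T^{(i')}$ would be a tree in the set available at step $i'$, strictly containing $T^{(i')}$; its top is the larger of the two and its square function dominates that of the tree with that top, so its $k$-size still exceeds $(\tfrac12)^{1/p_k}\si$, contradicting the maximality of $T^{(i')}$. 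Disjointness together with the weak-$(1,1)$ bound for the Hardy--Littlewood maximal function then gives
\[
\sum_{T\in\mathcal F}|I_T|\;\le\;\big|\{x:M_{p_k}f_k(x)>c\,\si\}\big|\;\le\;C\,|F_k|/\si^{p_k},
\]
which is \eqref{foruse0}. So you should drop the square-function summation argument and promote your last paragraph to the actual proof of the packing bound.
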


\begin{proof}
This is direct results from (\ref{1size}) and (\ref{2size}). Choose a maximal tree $T\subset S$ with 
\begin{align}\label{choose_tree}
k\mbox{-}{\rm{\bf size}}(T) > \Big(\frac{1}{2}\Big)^{1/p_k} {\rm{\bf size}}_k (S). 
\end{align}
If ${\rm{\bf size}}_k(S\backslash T) > (\frac{1}{2})^{1/p_k} {\rm{\bf size}}_k (S)$, we can choose a maximal tree $T'\subset S\backslash T$ satisfying (\ref{choose_tree}) and then remove $T'$ from $S\backslash T$. Repeat this process we finally obtain:
\begin{align*}
S_1= \bigcup\limits_{T\in \mathcal F} T  
\end{align*}
such that (\ref{choose_tree}) is true for all $T\in \mathcal F$ and (\ref{foruse}) holds  for $S_2 = S\backslash S_1$. If remains to check (\ref{foruse0}) for $\{T\}_{T\in \mathcal F}$. Indeed, from the above procedure we see that the intervals $\{I_T\}_{T\in \mathcal F}$ are disjoint and by (\ref{1size}), (\ref{2size}), 
\begin{align}\label{unit-contain}
\bigcup\limits_{T\in \mathcal F}I_T\subseteq \bigg\{x: M_{p_k}(f_k)(x)\geq \left(\frac{1}{2}\right)^{1/p_k} {\rm{\bf size}}_k (S)\bigg\},
\end{align}  
where $f_1=f$ and $f_2=g$. Then (\ref{foruse0}) follows from the fact that Hardy-Littlewood maximal function
is bounded from $L^1$ to weak $L^1$. 
\end{proof}


%

%
%
%
%
%
%
%
%
%
%
%
%
%
%
%

Applying Lemma \ref{organize}, $S_0$ can be decomposed as follows:
\begin{equation}\label{S0-111}
S_0=\bigcup\limits_{\sigma\leq 1}S_\sigma,
\end{equation}
where $\sigma$ ranges over positive dyadic numbers and $S_\sigma$ is a union of maximal trees such that for each $T\in S_\sigma$, 
\begin{equation}\label{upper-111}
{\rm{\bf size}}_k(T)\leq \sigma^{1/p_k} (|F_1|/|F_3|)^{1/p_k}, \,\,\,\,\,\,\mbox{for} \,\,k=1\,\, \textbf{and}\,\, 2
\end{equation}
and 
\begin{equation}\label{lower-111}
{\rm{\bf size}}_k(T)\geq \left(\frac{1}{2}\sigma\right)^{1/p_k} (|F_1|/|F_3|)^{1/p_k}, \,\,\,\,\,\, \mbox{for} \,\, k=1 \,\, \textbf{or} \,\,2.  
\end{equation}
Now we turn to the proof of the (\ref{Lambdagoal}), which implies Proposition \ref{major1}. 
From (\ref{unit-contain}) and (\ref{lower-111}), we conclude that
\begin{equation}\label{.}
\sum\limits_{T\in S_\sigma}|I_T| \leq \frac{C|F_1|}{\left((\frac{1}{2}\sigma)^{1/p_1}(|F_1|/|F_3|)^{1/p_1}\right)^{p_1}}
 +
\frac{C|F_2|}{\left((\frac{1}{2}\sigma)^{1/p_2}(|F_2|/|F_3|)^{1/p_2}\right)^{p_2}} 
\leq C |F_3|/\sigma.
\end{equation}
In addition, for any single tree $T$,  using (\ref{***}), the definition of sizes and Lemma \ref{interpolation}, we obtain 
\begin{equation}\label{dot}
\Lambda_T(f, g) \leq C|I_T| {\rm \bf size}_1^*(T) {\rm \bf size}_2(T)\,. 
\end{equation}

Hence from Lemma \ref{remove00}, (\ref{S0-111}), (\ref{.}), and (\ref{dot}),  we get 
\begin{align}
\Lambda_{S_0}(f,g)
&\leq \sum\limits_{\sigma \leq 1} \sum\limits_{T\in S_\sigma}
{\rm{\bf size}}_1^*(S_\sigma){\rm{\bf size}}_2(S_\sigma) |I_T|,
\end{align}
which is dominated by 
\begin{align*}
&\q\q\q\q\q\q
\sum\limits_{\sigma \leq 1} \bigg(\sum\limits_{T\in S_\sigma}|I_T|\bigg)  \sigma^{1/p_2}\bigg(\frac{|F_2|}{|F_3|}\bigg)^{1/p_2}\,\, \cdot 
\\
&\min
\bigg\{
2^{m(1-p_1/p_2')}\sigma^{1/p_1}\bigg(\frac{|F_1|}{|F_3|}\bigg)^{1/p_1},
\bigg(\sigma^{1/p_1}\bigg(\frac{|F_1|}{|F_3|}\bigg)^{1/p_1}\bigg)^{p_1/p_2'} \!\!\!\!\!\min\Big\{1, \frac{|F_1|}{|F_3|}\Big\}^{\frac{1-p_1/p_2'}{p_1}}
\bigg\}
\cdot
\end{align*}
Here we used Lemma \ref{interpolation} and (\ref{upper-111}). 
From (\ref{.}), $\Lambda_{S_0}(f,g)$ is then controlled by
\begin{align*}
&\q\q\q\q\q\q C|F_3|\left(\frac{|F_1|}{|F_3|}\right)^{1/p_1}\left(\frac{|F_2|}{|F_3|}\right)^{1/p_2}\,\, \cdot 
\\
&\sum\limits_{\sigma\leq 1}\min\bigg\{2^{m(1-p_1/p_2')}\sigma^{1/p_1},
\sigma^{1/p_2'}\left(\frac{|F_1|}{|F_3|}\right)^{1/p_2'-1/p_1}\min\{1,\frac{|F_1|}{|F_3|}\}^{1/p_1-1/p_2'}
 \bigg\}\sigma^{1/p_2-1},
\end{align*}
which is also majorized by
\begin{align*}
|F_1|^{1/p_1}|F_2|^{1/p_2}|F_3|^{1/r'}\sum\limits_{\sigma\leq 1}\min\{2^{m(1-p_1/p_2')}\sigma^{1/p_1},
\sigma^{1/p_2'}
 \}\sigma^{1/p_2-1}\,.
\end{align*}
The desired inequality now follows by noticing 
\begin{align}
\sum\limits_{\sigma\leq 1}\min\{2^{m(1-p_1/p_2')}\sigma^{1/p_1},
\sigma^{1/p_2'}
 \}\sigma^{1/p_2-1} \leq Cm.
\end{align}

%
%
%
%
%
%
%
%
%
%
%
%
%
%
%
%

\section{Bilinear Maximal functions}\label{maxfunction}
\setcounter{equation}0

In this section, we study the following bilinear maximal function
\begin{align}\label{max}
M_{\Gamma_P}(f,g)(x)  = \sup_{\epsilon}\frac{1}{2\epsilon}\int_{-\epsilon}^{\epsilon} |f(x-t)g(x-P(t))|dt
\end{align}
and employ the known results in the previous sections to deduce that $M_{\Gamma_P}$ maps from $L^{p_1}\times L^{p_2}$ to $L^r$ uniformly, for the same range of $p_1,p_2$ and $r$.  

Observe that 
\begin{align*}
M_{\Gamma_P}(f,g)(x)  
\leq  
\sup_{\epsilon}\frac{1}{2\epsilon}  & \left(\int_{-\epsilon}^{-\epsilon/2} |f(x-t)g(x-P(t))|dt + 
 \int_{\epsilon/2}^{\epsilon} |f(x-t)g(x-P(t))|dt\right)
 \\
+ & \q\q\q\sup_{\epsilon}\frac{1}{2\epsilon}\int_{-\epsilon/2}^{\epsilon/2} |f(x-t)g(x-P(t))|dt
\end{align*}
and the last term in the right side is bounded by $\frac{1}{2}M_{\Gamma_P}(f,g)(x)$. Thus, it is sufficient to consider the following operator
\begin{align}\label{max1}
T^{**}(f,g)(x)  =
\sup_{\epsilon}\frac{1}{2\epsilon}\left(\int_{-\epsilon}^{-\epsilon/2} |f(x-t)g(x-P(t))|dt+
 \int_{\epsilon/2}^{\epsilon} |f(x-t)g(x-P(t))|dt\right).
\end{align}
Let $\rho$ be a nonnegative even Schwartz function supported on $(1/2,2)\cup (-2,-1/2)$ and $\rho(1)=1$, it is then enough for us to consider the  following smoothing version of $T^{**}$: 
\begin{align}\label{max2}
T^*(f,g)(x)  = \sup_{j\in \mathbb Z}\int f(x-t)g(x-P(t))\rho_j(t)dt = \sup_j T_j(f, g)(x),
\end{align}
where we assume $f$ and $g$ to be nonnegative. 

Like what we did in the case of bilinear Hilbert transforms, we divide $j$'s into $J_{\rm good}(N)$ and $J_l(N)$'s according to the polynomial $P$. Via a technical treatment as in section \ref{decomposition3}, we consider $j\in J^*_l(N)$. Applying the decomposition in section \ref{decomposition3}, we have 
\begin{align*}
 T_{j+j_l}(f,g)(x)  &=  \int \int \hat f(\xi) \hat g(\eta) e^{2\pi i (\xi+\eta)x} 
\sum\limits_{m\in\mathbb{Z}}\sum\limits_{n\in\mathbb{Z}}\mathcal{M}_{m,n}(\xi,\eta)
d\xi d\eta
\\
&\!\!:=\sum_{m,n}T_{j_l+j}^{(m,n)}(f,g)(x)
 \end{align*}
where
\begin{align}\label{kernel}
\mathcal{M}_{m,n}(\xi,\eta)=\hat\Phi(\frac{\xi}{2^{j_l+j+m}})\hat\Phi(\frac{\eta}{2^{j_l+lj+n}})\left( \int e^{-2\pi i (\frac{t\xi}{2^{j_l+lj}} +\frac{(t^l+Q_l(t))\eta}{2^{j_l+lj}})}\rho(t)dt \right)
\end{align}
and $j_l$ is defined as in section \ref{decomposition3}.
The next step is to divide $(m,n)$ into the following cases:\\
$\bullet$ {Case 1.}  $|m-n|>>1$ and $\min\{m,n\}>0$,\\
$\bullet$ {Case 2.}  $|m-n|>>1$ with  $\max\{m,n\}>0, \min\{m,n\}<0$,\\
$\bullet$ {Case 3.}  $\max\{m,n\}<0$,\\
$\bullet$ {Case 4.}  $m\sim n$ and $ m, n>0$.\\
We want to point out here that case 4 is essential, however it can be handled similarly as bilinear Hilbert transforms. We begin to analyze the first case. Without lost of generality, we will assume $m>n>0$. Applying Fourier Series to write (\ref{kernel})  as
\begin{align}\label{series}
\mathcal{M}_{m,n}(\xi,\eta)=\hat\Phi(\frac{\xi}{2^{j_l+j+m}})\hat\Phi(\frac{\eta}{2^{j_l+lj+n}})\sum_{(n_1,n_2)\in\mathbb Z\times \mathbb Z}C^{(m,n)}_{n_1,n_2}e^{2\pi i \frac{n_1\xi}{2^{j_l+j+m}}}
e^{2\pi i \frac{n_2\eta}{2^{j_l+lj+n}}}.
\end{align} 
By utilizing integration by parts, it is clear that the coefficients $C^{(m,n)}_{n_1,n_2}$   decay rapidly  in the sense that for every positive integer $K$
\begin{align}\label{decay}
|C^{(m,n)}_{n_1,n_2}| \leq C_K (1+n_1^2+n_2^2)^{-K}(1+2^m+2^n)^{-K}.
\end{align}
Notice 
\begin{align*}
\sup_{j\in J^*_l(N)}\left|\int \hat f(\xi)2^{2\pi i x\xi}\hat\Phi(\frac{\xi}{2^{j_l+j+m}})
e^{2\pi i \frac{n_1\xi}{2^{j_l+j+m}}}d\xi\right| \leq C(1+n_1^2)Mf(x)
\end{align*}
and
\begin{align*}
\sup_{j\in J^*_l(N)}\left|\int \hat g(\eta)2^{2\pi i x\eta}\hat\Phi(\frac{\eta}{2^{j_l+lj+m}})
e^{2\pi i \frac{n_2\eta}{2^{j_l+lj+m}}}d\eta\right| \leq C(1+n_2^2)Mg(x).
\end{align*}
Here we use the fact that for any Schwartz function $\Phi$, $f\ast \Phi_j(x-n/2^j) \leq C_{\Phi}(1+n^2)Mf(x)$. Henceforth we have 
\begin{align}
\sup_{j\in J^*_l(N)}|T^{(m,n)}_{j_l+j}(f,g)(x)| \leq \sum_{n_1,n_2}|C_{n_1,n_2}^{(m,n)}|(1+n_1^2)(1+n_2^2)Mf(x)Mg(x)\,.
\end{align}
Summing up all $m, n$'s under the assumption of Case 1 and using (\ref{decay}), we get the upper bound 
\begin{align}
 C Mf(x) Mg(x)\,. 
\end{align}
Case 2 is a combination of Case 1 and Case 3, thus can be handled by methods applied in Case 1 and Case 3, we omit the details. We now turn to Case 3. Consider
\begin{align}
T^-_{j_l+j}(x)= \sum_{m<0,n<0}T_{j_l+j}^{(m,n)}(f,g)(x)
\end{align}
where the corresponding kernel is 
\begin{align*}
\mathcal{M}^-(\xi,\eta) &=\sum_{m<0,n<0}\mathcal{M}_{m,n}(\xi,\eta)
\\
&=\sum_{m<0,n<0}\hat\Phi(\frac{\xi}{2^{j_l+j+m}})\hat\Phi(\frac{\eta}{2^{j_l+lj+n}})\left( \int e^{-2\pi i (\frac{t\xi}{2^{j_l+lj}} +\frac{(t^l+Q_l(t))\eta}{2^{j_l+lj}})}\rho(t)dt \right)
\\
&= \hat\Psi(\frac{\xi}{2^{j_l+j}})\hat\Psi(\frac{\eta}{2^{j_l+lj}})\left( \int e^{-2\pi i (\frac{t\xi}{2^{j_l+lj}} +\frac{(t^l+Q_l(t))\eta}{2^{j_l+lj}})}\rho(t)dt \right),
\end{align*}
where 
$$
\hat\Psi(\frac{\xi}{2^{j_l+j}})= \sum_{m<0}\hat\Phi(\frac{\xi}{2^{j_l+j+m}})
$$
and
$$
\hat\Psi(\frac{\eta}{2^{j_l+lj}})= \sum_{n<0}\hat\Phi(\frac{\eta}{2^{j_l+lj+n}}).
$$
Invoking Taylor expansion, we expand $\mathcal{ M}^-(\xi,\eta)$ as
$$
 \sum_{k_1\geq 0}\sum_{k_2 \geq 0}\frac{1}{k_1!}\frac{1}{k_2!}\hat\Psi(\frac{\xi}{2^{j_l+j}})(\frac{2\pi i \xi}{2^{j_l+j}})^{k_1}\hat\Psi(\frac{\eta}{2^{j_l+lj}})(\frac{2\pi i\eta}{2^{j_l+lj}})^{k_2}\int \rho(t)
t^{k_1} (t^l+Q_l(t))^{k_2}dt.
$$
This yields 
\begin{align*}
\sup_{j\in J^*_l(N)}|T^-_{j_l+j}(x)| \leq CMf(x)Mg(x).
\end{align*}
Therefore for $(m,n)$ in Case 1, 2 or 3, we have the following point wise estimate
\begin{align}\label{nond}
\sup_{j\in J^*_l(N)}\sum T_{j_l+j}^{(m,n)}(f,g)(x) \leq CMf(x)Mg(x).
\end{align}
This operator is certainly bounded from $L^{p_1}\times L^{p_2} \to L^{r}$, for $p_1 > 1$, $p_2>1$, $r>1/2$ and $\frac{1}{p_1}+\frac{1}{p_2} =\frac{1}{r}$. For Case 4, notice that if $m$ is fixed,the number of $n$ is finite. WLOG, we can assume $m=n$. Notice that 
\begin{align*}
\sup_{j\in J^*_l(N)}T_{j+j_l}^{(m,m)}(f,g)(x) \leq \sum_{j\in J_l^*{(N)}}|T_{j_l+j}^{(m,m)}(f,g)(x)|\,.
\end{align*}
  We conclude that $\sum\limits_{j\in J_l^*(N)}|T_{j_l+j}^{(m,m)}(f,g)(x)|$ is bounded from $L^{p_1}\times L^{p_2} \to L^{r}$ with a bound $2^{-\epsilon m}$, for $p_1 > 1$, $p_2>1$, $r>1/2$ and $\frac{1}{p_1}+\frac{1}{p_2} =\frac{1}{r}$. 
  This is a simple consequence of Proposition \ref{progoal} and (\ref{single}).
Hence we completes  the case when ${j}\in J_l^*(N)$. Now we turn to $J_{\rm good}(N)$. Similarly, 
\begin{align*}
\sup_{j\in J_{\rm good}(N)}|T_j(f,g)(x)| 
\leq
\sum_{j\in J_{\rm good}(N)} |T_{j}(f,g)(x)|.
\end{align*}
Since $\# J_{\rm good}(N)$ does not exceed a uniform constant  by Lemma \ref{keyL} and each $T_j$ is bounded for $p_1 > 1$, $p_2>1$, $r>\frac{d-1}{d}$ by Theorem \ref{badL}, we obtain the desired estimate and  finish the proof of Theorem \ref{maxbi} for the bilinear 
maximal functions.

%
%
%
%
%
%
%
%
%
%
%
%

\section{Proof of Lemma \ref{error}}\label{P_error}
\setcounter{equation}0

We prove Lemma \ref{error} in this section. The main difficulty here is to obtain an upper bound 
growing slowly, say $O(m)$. To achieve this, we utilize a Whitney decomposition for the exceptional 
set $\Omega$ defined as in (\ref{defofExp}). More precisely,
let $\mathcal F$ be a collection of pairwise disjoint dyadic intervals $J's$ such that for each $J\in\mathcal F$
$$
  3|J| \geq  {\rm dist}(J, \p\Omega)\geq |J|\,
$$
and $\Omega=\bigcup_{J\in\mathcal F} J$. 
From the defintion of $\Omega$, clearly we have, 
for each $J\in\mathcal F$ and $i\in\{1,2\}$, 
$$
\frac{1}{|7J|}\int_{7J}\Id_{F_i}\leq C|F_i|/|F_3|\,.  
$$

To simplify the notations, we introduce following functions. 
For any integer $M$, let $\delta_{j, M}: \mathbb R\times \mathbb R\to \mathbb R$ be 
given by
\begin{equation}
\delta_{j, M}(x, y) = \big( 1+ 2^{j_l+j+m}|x-y| \big)^{-M}\,\, {\rm for}\,\, (x, y)\in\mathbb R\times \mathbb R\,.  
\end{equation}
For any $x\in\mathbb R$ and any measurable set $E\subset \mathbb R$, 
\begin{equation}
\delta_j(x, E) := \big( 1+ 2^{j_l+j+m}{\rm dist}(x, E)\big)^{-N}\,.
\end{equation}
Also define $x_j(t)$ and $x_{j, l}(t)$ as
\begin{equation}
x_j(t)=x-2^{-j-j_l}t\,\,\, {\rm and}\,\,\, x_{j, l}(t)=x-2^{-j_l-lj}(t^l+Q_l(t))\,. 
\end{equation}
and set
\begin{equation}
 j'= j_l+j+m\,\,{\rm and}\,\,\,  j''=j_l+lj+m\,.
\end{equation}

\begin{lemma}\label{lem0}
Let $I_1, I_2$ be two intervals in $\mathcal F$. Suppose $a\in I_1$ and $b\in I_2$. 
If ${\rm dist}(I_1, I_2)\geq 100\min\{|I_1|, |I_2|\}$,
then    
$$
\delta_{j, 2N}(a,b)
\leq  C_N \big(\delta_{j}(a, \Omega^c) +\delta_j(b, \Omega^c) \big)\,. 
$$
\end{lemma}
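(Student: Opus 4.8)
The plan is to exploit the key geometric fact coming from the Whitney decomposition: since $I_1,I_2\in\mathcal F$ are Whitney intervals for $\Omega$ with $\mathrm{dist}(I_1,I_2)\geq 100\min\{|I_1|,|I_2|\}$, one of the two intervals — say the smaller one, WLOG $|I_1|\le|I_2|$ — must have its distance to $\partial\Omega$ comparable to $|I_1|$, hence comparable to its distance to $\Omega^c$. More precisely, by the Whitney property $\mathrm{dist}(I_1,\partial\Omega)\le 3|I_1|$, so there is a point of $\Omega^c$ within $O(|I_1|)$ of $I_1$; combined with $\mathrm{dist}(I_1,I_2)\geq 100|I_1|$, the triangle inequality gives $\mathrm{dist}(a,b)\ge \mathrm{dist}(I_1,I_2)\gtrsim |I_1|\gtrsim \mathrm{dist}(a,\Omega^c)$ for any $a\in I_1$. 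So $\mathrm{dist}(a,\Omega^c)\le C\,\mathrm{dist}(a,b)$.

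With that in hand, the estimate is purely a matter of comparing the two polynomial-type weights. First I would observe that
$$
\delta_{j,2N}(a,b)=\bigl(1+2^{j'}|a-b|\bigr)^{-2N}\le \bigl(1+2^{j'}|a-b|\bigr)^{-N},
$$
and that from $\mathrm{dist}(a,\Omega^c)\le C|a-b|$ we get
$$
\bigl(1+2^{j'}\,\mathrm{dist}(a,\Omega^c)\bigr)^{-N}\ge \bigl(1+2^{j'}C|a-b|\bigr)^{-N}\ge C_N^{-1}\bigl(1+2^{j'}|a-b|\bigr)^{-N}\ge C_N^{-1}\delta_{j,2N}(a,b),
$$
where in the middle step I use that $(1+Cs)^{-N}\ge C^{-N}(1+s)^{-N}$ for $s\ge0$, $C\ge1$. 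Since $j'=j_l+j+m$ is exactly the exponent appearing in the definition of $\delta_j(\cdot,\Omega^c)$, this is precisely $\delta_{j,2N}(a,b)\le C_N\,\delta_j(a,\Omega^c)$, which is stronger than (one half of) the claimed bound; adding the harmless nonnegative term $\delta_j(b,\Omega^c)$ gives the stated inequality. In the case $|I_2|<|I_1|$ one runs the same argument with the roles of $a$ and $b$ swapped and lands on $\delta_{j,2N}(a,b)\le C_N\,\delta_j(b,\Omega^c)$.

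I do not expect any real obstacle here: the only mild subtlety is making the geometric reduction clean — namely checking that the Whitney condition plus the separation hypothesis $\mathrm{dist}(I_1,I_2)\ge 100\min\{|I_1|,|I_2|\}$ really does force $\mathrm{dist}(a,b)$ to dominate $\mathrm{dist}(a,\Omega^c)$ (or $\mathrm{dist}(b,\Omega^c)$) up to an absolute constant, uniformly in $a\in I_1$, $b\in I_2$. One must be slightly careful that the point of $\Omega^c$ realizing (approximately) $\mathrm{dist}(I_1,\partial\Omega)$ need not lie near $I_2$, but that is fine — we only need an \emph{upper} bound on $\mathrm{dist}(a,\Omega^c)$, which the Whitney property supplies directly as $\le 3|I_1|+|I_1|=4|I_1|$. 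Everything after that is the elementary monotonicity inequality for $(1+s)^{-N}$, with the extra factor of $2N$ versus $N$ giving room to spare.
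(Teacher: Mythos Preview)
Your proposal is correct and follows essentially the same route as the paper: prove the geometric inequality $|a-b|\gtrsim \mathrm{dist}(\,\cdot\,,\Omega^c)$ from the Whitney property plus the separation hypothesis, then compare the weights. The only cosmetic difference is that in the asymmetric case $|I_1|>|I_2|$ the paper still bounds by $\mathrm{dist}(a,\Omega^c)$ via an extra triangle-inequality step (showing in fact $|a-b|\geq C\,\mathrm{dist}(a,\Omega^c)$ in \emph{both} cases), whereas you symmetrize and bound by $\delta_j(b,\Omega^c)$; either version suffices for the stated lemma.
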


\begin{proof}
 In fact, we prove a stronger result:
\begin{equation}
|a-b|\geq C{\rm dist(a, \Omega^c)}\,. 
\end{equation}
It is easy to see that Lemma \ref{lem0} follows immediately by the triangle inequality. 
First we consider the case when  $|I_1|\leq |I_2|$. In this case,  because $I_1$ is in 
the Whitney decomposition $\mathcal F$,  there exists $z_0\in\Omega^c$ such that
$
|a-z_0|\leq 3|I_1|\,. 
$
Thus 
$$ 
 |a-b|\geq 100|I_1|\geq \frac{100}{3}|a-z_0|\geq {\rm dist}(a, \Omega^c)\,,
$$
as desired.  We now turn to the second case when $|I_1| > |I_2|$. 
 In this case,  
notice that there exists $z_0\in\Omega^c$ such that
$
|b-z_0|\leq 3|I_2|$, since $I_2\in\mathcal F$. 
Thus 
$$
100|I_2|\leq |a-b|\leq |a-z_0|+|b-z_0|\leq |a-z_0|+3|I_2|\,,
$$
which implies
$$
|a-z_0|\geq 97|I_2|\geq \frac{97}{3}|b-z_0|\,.
$$
From this, we have
$$
 |a-b|\geq |a-z_0|-|b-z_0|\geq |a-z_0|-\frac{3}{98}|a-z_0|
\geq \frac{95}{98}{\rm dist}(a, \Omega^c)\,. 
$$
\end{proof}

\begin{lemma}\label{lem00}
Let $I_1, I_2$ be two intervals in $\mathcal F$. Suppose  ${\rm dist}(I_1, I_2)\leq 100\min\{|I_1|, |I_2|\}$,
then    
$$
|I_1|\sim |I_2|\,. 
$$
\end{lemma}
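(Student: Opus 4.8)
The plan is to prove the two-sided comparison $|I_1|\sim|I_2|$ for Whitney intervals $I_1,I_2\in\mathcal F$ satisfying $\mathrm{dist}(I_1,I_2)\le 100\min\{|I_1|,|I_2|\}$ by a direct application of the two defining properties of a Whitney decomposition: that for each $J\in\mathcal F$ one has $|J|\le\mathrm{dist}(J,\partial\Omega)\le 3|J|$. By symmetry I may assume $|I_1|\le|I_2|$, so that $\min\{|I_1|,|I_2|\}=|I_1|$ and the hypothesis reads $\mathrm{dist}(I_1,I_2)\le 100|I_1|$; it then suffices to produce an upper bound $|I_2|\le C|I_1|$ with an absolute constant $C$.

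First I would pick, using the lower Whitney bound for $I_2$, a point $w\in\partial\Omega$ with $\mathrm{dist}(I_2,\partial\Omega)=\mathrm{dist}(w,I_2)$, so in particular $\mathrm{dist}(w,I_2)\ge |I_2|$. Next, starting from any point $b\in I_2$ and any point $a\in I_1$, the triangle inequality together with $\mathrm{diam}(I_1)=|I_1|$, $\mathrm{diam}(I_2)=|I_2|$ and the hypothesis gives a bound on $\mathrm{dist}(a,\partial\Omega)$ from below in terms of $\mathrm{dist}(w,\cdot)$: for a suitable choice of $b$ realizing $\mathrm{dist}(w,I_2)$ one gets $\mathrm{dist}(a,w)\ge \mathrm{dist}(w,I_2)-\mathrm{dist}(a,b)\ge |I_2|-|I_1|-\mathrm{dist}(I_1,I_2)\ge |I_2|-101|I_1|$, hence $\mathrm{dist}(a,\partial\Omega)\ge |I_2|-101|I_1|$. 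On the other hand, the upper Whitney bound for $I_1$ forces $\mathrm{dist}(a,\partial\Omega)\le \mathrm{dist}(I_1,\partial\Omega)+|I_1|\le 4|I_1|$. Combining the two inequalities yields $|I_2|-101|I_1|\le 4|I_1|$, i.e. $|I_2|\le 105|I_1|$, which is the desired $|I_2|\lesssim|I_1|$; together with $|I_1|\le|I_2|$ this gives $|I_1|\sim|I_2|$.

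I do not anticipate any serious obstacle here: the statement is a standard geometric property of Whitney cubes, and the only care needed is bookkeeping the constants (one must be mildly careful that distances from a point versus distances between sets differ by at most the diameters of the sets, and that $\partial\Omega$ plays the role of $\Omega^c$ up to harmless adjustments). If one wants the cleaner comparison $\mathrm{dist}(I_1,\partial\Omega)\sim\mathrm{dist}(I_2,\partial\Omega)$ as an intermediate step, that follows the same way and then the Whitney bounds convert it to $|I_1|\sim|I_2|$; either route works, and the resulting constant is absolute, independent of $\Omega$ and of all the parameters of the problem.
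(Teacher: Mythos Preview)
Your approach is correct and essentially the same as the paper's: both arguments combine the Whitney bounds $|J|\le\mathrm{dist}(J,\Omega^c)\le 3|J|$ with the triangle inequality, the paper phrasing it as a contradiction (assuming $|I_1|<\tfrac{1}{2000}|I_2|$ and exhibiting a point of $\Omega^c$ too close to $I_2$) and you as a direct estimate yielding $|I_2|\le 105|I_1|$. One small bookkeeping slip to fix: to lower-bound $\mathrm{dist}(a,\partial\Omega)$ you must let $z\in\partial\Omega$ be \emph{arbitrary} (not the specific $w$ nearest $I_2$) and choose $b\in I_2$ nearest to $a$ (not to $w$); with those choices the chain $|a-z|\ge|b-z|-|a-b|\ge \mathrm{dist}(I_2,\partial\Omega)-\mathrm{dist}(a,I_2)\ge|I_2|-101|I_1|$ is valid and the rest goes through unchanged.
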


\begin{proof}
Assume $|I_1|\leq |I_2|$ and we prove that $|I_1|\geq C|I_2|$. 
In addition, we can assume that $|I_1|< \frac{1}{2000}|I_2|$, otherwise just simply take
$C=1/2000$.  From  ${\rm dist}(I_1, I_2)\leq 100\min\{|I_1|, |I_2|\}$, we have 
$${\rm dist}(I_1, I_2)\leq \frac{1}{20}|I_2|\,. $$
This indicates $I_1$ is in a small neighborhood of $I_2$. Since $I_2\in\mathcal F$, 
${\rm dist}(I_2, \Omega^c)\geq |I_2|$. However, from ${\rm dist}(I_1, \Omega^c)\leq 3|I_1|$, 
we conclude that there exists $z_0\in\Omega^c$ such that $ {\rm dist}(I_1, z_0)\leq 3|I_1|$. 
Thus ${\rm dist}(z_0, I_2)\leq 3|I_1|+\frac{1}{20}|I_2| < \frac{1}{10}|I_2|$, which implies
${\rm dist}(I_2, \Omega^c)< \frac{1}{10}|I_2|$, contradicting to ${\rm dist}(I_2, \Omega^c)\geq |I_2|$. 
\end{proof}

Now we are ready to prove Lemma \ref{error}. We only present a proof of  (\ref{error1}) for $L_{1, m}(f, g)$. 
(\ref{error2}) can be done similarly.
Inserting the absolute value in the integrand, we bound $L_{1, m}(f, g)$ by $L'_m(f, g)$, where
$L'_{m}(f, g)$  is given by
$$
\sum_{j}\int_{\Omega^c}
 \int_{\Omega_{j'}} 2^{j'} \delta_{j, N}(x_j(t)-y) dy 
\int \left|f*\Phi_{ j'} (x_j(t))
 g*\Phi_{ j''}(x_{j,l}(t))\rho(t)\right| dt  dx.
$$

If we restrict the variable $x_j(t)$ to $\Omega^c$, then the situation is easy. Indeed, 
in this case, $L'_m$ is controlled by, from the definition of $\Omega$, 
\begin{align*}
\sum_{j}\int_{\Omega^c}\int_{\Omega_{j'}}
2^{j'} \Id_{\Omega^c}(x_j(t))
\delta_{j, N}\big( x_j(t)-y\big) dy
\left(\frac{|F_1|}{|F_3|}\right)^{1/p_1}\!
\int \left| g*\Phi_{j''}(x_{j,l}(t))\rho(t)\right| dt  dx.
\end{align*}

Changing variable $x_j(t)\mapsto u$, we dominate $L_m'$ by 
$$
  \sum_{j}\int_{\Omega_{j'}} \left(\frac{|F_1|}{|F_3|}\right)^{1/p_1}
\delta_{j}\big( y, \Omega^c \big)
\big(\int_{\Omega^c} \int 2^{j'} \delta_{j,N}(u,y) M\Id_{F_2}(u+\textbf{tr}_j(t)) |\rho(t)| du dt \big)   dy.
$$
For $|j|\geq N$ and any fixed $u$, a change of variable $u+\textbf{tr}_j(t)\mapsto w$ yields
an estimate of $L'_m$ by the following two terms, 
\begin{eqnarray*}
 & & \sum_{j>0}\int_{\Omega_{j'}}\left(\frac{|F_1|}{|F_3|}\right)^{1/p_1} 
\delta_{j}(y, \Omega^c)
\int_{\Omega^c}2^{j'} \delta_{j, N}\big(u, y\big)  \\
& & \q\q\q
2^{j_l+j}\int_{u-C2^{-j_l-j}}^{u+C2^{-j_l-j}} \big( M\Id_{F_2}(w) \big)^{1/p_2}dw     du dy
\end{eqnarray*}
and
\begin{eqnarray*}
 & & \sum_{j<0}\int_{\Omega_{j'}}\left(\frac{|F_1|}{|F_3|}\right)^{1/p_1} 
\delta_{j}(y, \Omega^c)
\int_{\Omega^c}2^{j'} \delta_{j, N}\big(u, y\big)  \\
& & \q\q\q
2^{j_l+lj}\int_{u-C2^{-j_l-lj}}^{u+C2^{-j_l-lj}} \big( M\Id_{F_2}(w) \big)^{1/p_2}dw     du dy
\end{eqnarray*}


Using the fact that $(Mg)^{1/p_2}$ is an $A_1$ weight, we then have an upper bound 
$$
C\left(\frac{|F_1|}{|F_3|}\right)^{1/p_1}\left(\frac{|F_2|}{|F_3|}\right)^{1/p_2} |\Omega|
$$
as desired. Henceforth we may restrict $x_j(t)$ to $\Omega$.  
In this case,  apply the Whitney decomposition for $\Omega$ to estimate $L_m' (f, g)$ by
\begin{eqnarray*}
 & &\sum_j\sum_{I_1, I_2\in\mathcal F}
\int_{\Omega^c}  \int \left( \int_{\Omega_{j'}}
2^{j'}\Id_{I_1}(y)\Id_{I_2}\big(x_j(t)\big) \delta_{j, 2N}\big(x_j(t)-y\big) dy \right)\\
& &\q\q\q\q
\bigg|f*\Phi_{j'} (x_j(t))
 g*\Phi_{j''}\big(x_{j, l}(t)\big)\rho(t)\bigg| dt  dx\,. 
\end{eqnarray*}


If ${\rm dist}(I_1, I_2)\geq 100\min\{|I_1|, |I_2|\}$, then apply Lemma \ref{lem0} to 
gain the decay factors 
$\delta_j(y, \Omega^c)\delta_j(x_j(t), \Omega^c) $
in the integrand. These decay factors allow us to treat this case exactly as the easy case when $x_j(t)\in \Omega^c$. We omit the details. \\

The argument above  indicates that when$x_j(t)\in\Omega$ the principal contribution 
is made by those $I_1, I_2$ with short distance. 
Thus we have to face this main difficulty by estimating
\begin{eqnarray*}
L''_m & =  & \sum_{j}\!\!\!\!\!\sum_{\substack{I_1, I_2\in\mathcal F\\
 {\rm dist}(I_1, I_2)< 100\min\{|I_1|, |I_2|\}   }} \!\!
\int_{\Omega^c}\int \!\!\!\int_{\Omega_{j'}}     2^{j'}\Id_{I_1}(y)\Id_{I_2}\big((x_j(t)\big)\\
& &\q\q\cdot
\left|f*\Phi_{j'}\big (x_j(t)\big)
 g*\Phi_{j''}(x_{j, l}(t))\rho(t)\right| \delta_{j, N}\big(x_j(t)-y\big)
dy  dt  dx
\end{eqnarray*}

From Lemma {\ref{lem00}}, we know that $|I_1|\sim |I_2|$ under the assumption 
${\rm dist}(I_1, I_2)< 100\min\{|I_1|, |I_2|\}$. Moreover, we are free to assume that
\begin{equation}\label{eqnj}
\max\{|I_1|, |I_2|\}\leq 2^{50} 2^{-j_l-j}\,.
\end{equation}
 Indeed, if not so, then $|I_2|\geq 2^82^{-j_l-j}$
from Lemma \ref{lem00}. For $x_j(t)\in I_2 $ and $x\in\Omega^c$,
we clearly have 
$$
 {\rm dist}(x, I_2)\leq | x_j(t)-x|\leq 2\cdot 2^{-j_l-j}\leq \frac{1}{2^7}|I_2|\,, 
$$ 
 implying ${\rm dist}(I_2, \Omega^c)\leq |I_2|/10$. This contradicts $I_2\in\mathcal F$. 
On the other hand, we can also assume 
\begin{equation}\label{I1small}
 |I_1|\geq \frac{1}{2^{10}}2^{-j_l-j-m}\,. 
\end{equation}
This is because otherwise, we have $|I_1|< 2^{-j_l-j-m-10}$.
But notice  that $y\in I_1$ and $y\in\Omega_{j_l+j+m}$ imply 
$$
  2^{-j_l-j-m}\leq  {\rm dist}(y, \Omega^c) \leq 4|I_1| < 2^{-j_l-j-m-8}\,. 
$$
This yields a contradiction. From (\ref{eqnj}) and (\ref{I1small}), 
we conclude that for any given $I_1\in \mathcal F$, 
\begin{equation}\label{range} 
            \frac{2^{50}}{|I_1|}\geq 2^{j_l+j} \geq \frac{1}{2^{m+10}|I_1|}\,.
\end{equation}
(\ref{range}) states that for given $I_1$, there are at most $100m$ many
$j$'s.  From Lemma \ref{lem00}, we also see that 
for given $I_1$,  there are at most finitely many $I_2$'s with 
${\rm dist}(I_1, I_2)< 100\min\{|I_1|, |I_2|\} $.   Without loss of generality, 
in what follows, we assume $I_2=I_1$ and we try to estimate
\begin{align*}
L'''_m = &\sum_{I_1\in \mathcal F}\!\!\sum_{\substack{j \\
 \frac{2^{50}}{|I_1|}\geq 2^{j_l+j}\geq \frac{1}{2^{m+10}|I_1|}  }}
\int\!\int \!\int_{\Omega_{j'}} 2^{j'} \Id_{I_1}(y)\Id_{I_1}(u) \Id_{\Omega^c}(u+2^{-j_l-j}t)
\\
&\q\q\q\q \left|f*\Phi_{j'} (u)
      g*\Phi_{j''}(u+\textbf{tr}_j(t))\rho(t)\right| 
      \delta_{j, N}(u, y) dy
  dt  du
\end{align*}

Here we made a change variable $x_j(t)\mapsto u$. 
Notice that 
$$
\left|f*\Phi_{j'} (u)\right|  \delta_{j, 2}(u, y)
\leq CMf(y)
$$
$L'''_m$ is majorized by
\begin{align*}
 \sum_{I_1\in \mathcal F}\!\!\sum_{\substack{j\\
 \frac{2^{50}}{|I_1|}\geq 2^{j_l+j}\geq \frac{1}{2^{m+10}|I_1|}  }}\!\!
\int_{I_1} Mf(y)  \mathcal G(y) dy
\end{align*}
where $\mathcal G(y) $ is defined as 
\begin{align*}
\mathcal G(y)=
\!\!\iint  
2^{j'} \Id_{I_1}(u)\left|g*\Phi_{j''}(u+\textbf{tr}_j(t))\rho(t) \Id_{\Omega^c}(u+2^{-j_l-j}t)\right|
\delta_{j, N-2}(u-y) dt du \,.
\end{align*}

Using the same $A_1$-trick as before, we replace $u+\textbf{tr}_j(t)$ by $w$ for given $u$. 
Since $u+2^{-j_l-j}t\in\Omega^c$, we integrate in $w$ in a $2^{10}2^{-j_l-j}$ neighborhood of $u$ if $j>0$ or in a $2^{10}2^{-j_l-lj}$ neighborhood of $u$ if $j<0$.  The neighborhoods  must contain
at least one point from $\Omega^c$ in order to make a nonzero value for the integral.  Henceforth, we estimate
$L'''_m$ by
$$
 C\left(\frac{|F_2|}{|F_3|}\right)^{1/p_2}
 \sum_{I_1\in \mathcal F}\!\!\!\sum_{\substack{j \\
 \frac{2^{50}}{|I_1|}\geq 2^{j_l+j}\geq \frac{1}{2^{m+10}|I_1|}  }}\!\!\!\!\!\!
\int_{I_1} Mf(y) dy\,,
$$
which can be controlled by
\begin{align*}
&Cm\left(\frac{|F_2|}{|F_3|}\right)^{1/p_2}
 \sum_{I_1\in \mathcal F}7|I_1|
\frac{1}{|7I_1|}\int_{7I_1} \left( M\Id_{F_1}(y)\right)^{1/p_1} dy
\\
\leq &Cm\left(\frac{|F_2|}{|F_3|}\right)^{1/p_2}\sum_{I_1\in \mathcal F}|I_1|
\inf_{y\in 7I_1} \left( M\Id_{F_1}(y)\right)^{1/p_1}\,. 
\end{align*}
Since $I_1\in\mathcal F$,  it is clear that 
$$
\inf_{y\in 7I_1} M\Id_{F_1}(y) \leq C|F_1|/|F_3|\,. 
$$
Hence, $L'''_m$ can finally dominated by
$$
 Cm \left(\frac{|F_1|}{|F_3|}\right)^{1/p_1} \left(\frac{|F_2|}{|F_3|}\right)^{1/p_2}
 |\Omega|\,,
$$
as desired.

%
%
%
%
%
%
%
%
%
%
%
%
%
%
%
%
%
%
%
%
%
%

\section{Proof of Lemma \ref{BMO1}}\label{P_BMO1}
\setcounter{equation}0

In this section we provide a BMO estimate for the square function $S_{1, Z}$. 
Notice that 
\begin{equation}\big|\sum\limits_{n\in Z_j } \Id_{n,l,j}^*\psi_{j_l+j+m}\big| \leq C\,.
\end{equation}
 (\ref{lp}) is a direct consequence  of   the definition of $S_{1, Z}$ and  Littlewood-Paley $L^p$ theory 
 on the square function. 
\\

For the BMO estimate (\ref{pBMO}), let $\textbf{J}$ be a dyadic interval of length $2^{-J}$. 
From the definition of BMO norm,  we need to majorize the following value
\begin{align}\label{B1}
\inf_{c} \int_\textbf{J} \bigg|\bigg(\sum\limits_{j\in J^*_{l,+}(N)}\Big|\sum\limits_{n\in Z_j }f_{n,l,m,j}(x)\Big|^2\bigg)^{1/2} - c\bigg| dx.
\end{align}
Via the triangle inequality,  we dominate (\ref{B1}) by $C(I_1+I_2)$, where 
$$I_1=
 \int_\textbf{J} \bigg|\bigg(\sum\limits_{j\in J^*_{l,+}(N), J\leq j+l_j+m}\big|\sum\limits_{n\in Z_j }f_{n,l,m,j}(x)\big|^2\bigg)^{1/2}\bigg| dx
$$
and
$$I_2=  
\inf_{c\in\mathbb{R}}\int_\textbf{J} \bigg|\bigg(\sum\limits_{j\in J^*_{l,+}(N), J\geq j+l_j+m}\Big|\sum\limits_{n\in Z_j }f_{n,l,m,j}(x)\Big|^2\bigg)^{1/2} - c\bigg| dx.
$$
We consider $I_1$ first. By splitting $f_{n,l,m,j}(x)$ into two parts
$$
 \Id_{n,l,j}^*\psi_{j_l+j+m}\big(f\Id_{2\textbf{J}}\big)\ast\Phi_{j_l+j+m}(x)+\Id_{n,l,j}^*\psi_{j_l+j+m}\big(f\Id_{(2\textbf{J})^c}\big)\ast\Phi_{j_l+j+m}(x), 
$$
$I_1$ can be bounded by $I_{1,1}+I_{1,2}$, where 
 $$I_{1,1}=
 \int_\textbf{J} \bigg|\bigg(\sum\limits_{j\in J^*_{l,+}(N), J\leq j+l_j+m}\Big|\sum\limits_{n\in Z_j }\Id_{n,l,j}^*\psi_{j_l+j+m}(f\Id_{2\textbf{J}})\ast\Phi_{j_l+j+m}(x)\Big|^2\bigg)^{1/2}\bigg| dx
$$
and 
$$
I_{1,2}=
\int_\textbf{J} \bigg|\bigg(\sum\limits_{j\in J^*_{l,+}(N), J\leq j+l_j+m}\Big|\sum\limits_{n\in Z_j }\Id_{n,l,j}^*\psi_{j_l+j+m}(f\Id_{(2\textbf{J})^c})\ast\Phi_{j_l+j+m}(x)\Big|^2\bigg)^{1/2}\bigg| dx.
$$
Applying H\"older's Inequality and (\ref{lp}), we estimate $I_{1,1}$ by 
\begin{align*}
 &  |\textbf{J}|^{1/p'}
\bigg\|\bigg(\sum\limits_{ \substack{j\in J^*_{l,+}(N)\\J\leq lj+l_j+m} }\Big|\sum\limits_{n\in Z_j }\Id_{n,l,j}^*\psi_{j_l+j+m}(f\Id_{2\textbf{J}})\ast\Phi_{j_l+j+m}(x)\Big|^2\bigg)^{1/2} \bigg\|_{L^p({\textbf{J}})}
\\
\leq& C
|\textbf{J}|^{1/p'}\big\|f\Id_{2\textbf{J}}\big\|_{p} \big\|\psi_{j_l+j+m}\big\|_{L^{\infty}(\textbf{J})}
\\
\leq &
C|\textbf{J}| \min\{1, |F_1|/|F_3|\}^{1/p}
\end{align*}
We now turn to $I_{1,2}$. For each $x$, choose $z\in \Omega$ with ${\rm dist}(x,\Omega) \sim |x-z|$, then  
\begin{align*}
&\Big|(f\Id_{(2\textbf{J})^c})\ast \Phi_{j_l+j+m}(x)\Big|
\\
\leq &C_K \int_{(2\textbf{J})^c}|f(y)| 
2^{j_l+j+m} \delta_{j, K}(x, y) dy
\\
\leq &
C_K\int_{(2\textbf{J})^c}
2^{j_l+j+m}|f(y)| \delta_{j,2}(y, z)\frac{1}{\delta_{j, 2}(y,x)\delta_{j,2}(x,z)} 
\delta_{j, K}(x, y) dy
\end{align*}
From $x\in\textbf{J}$ and $y\in (2\textbf{J})^c$, we have $|x-y| \geq 2^{-J-1}$, which implies
\begin{align}\label{ui1}
|(f\Id_{(2\textbf{J})^c})\ast \Phi_{j_l+j+m}(x)|
\leq CMf(z)(1+2^{j_l+j+m-J-1})^{2-K} (1+2^{j_l+j+m}\rm dist(x,\Omega))^2.
\end{align}
By the definition of $\Phi_{j_l+j+m}$, we have 
\begin{align}\label{ui2}
|\Phi_{j_l+j+m}(x)(1+2^{j_l+j+m}\rm dist(x,\Omega))^2| \leq C.
\end{align}
Also, from the definition of $\Omega$,  $z\in\Omega$ implies 
\begin{align}\label{ui4}
Mf(z) \leq \min\{1, |F_1|/|F_3|\}\leq  \min\{1, |F_1|/|F_3|\}^{1/p}
.
\end{align}
By utilizing  $|\sum\limits_{n\in Z_j}\Id_{j_l+j+n}^*(x)| \leq 1$, (\ref{ui1}), (\ref{ui2}), 
and (\ref{ui4}), we obtain
\begin{align*}
&\bigg|\bigg(\sum\limits_{j\in J^*_{l,+}(N), J\leq j+l_j+m}\Big|\sum\limits_{n\in Z_j }\Id_{n,l,j}^*\psi_{j_l+j+m}(f\Id_{(2\textbf{J})^c})\ast\Phi_{j_l+j+m}(x)\Big|^2\bigg)^{1/2}\bigg|
\\
\leq &C \min\{1,|F_1|/|F_3|\}^{1/p}
\end{align*}
and henceforth $I_{1,2}$ is bounded by $C|\textbf{J}|\min\{1,|F_1|/|F_3|\}^{1/p}$. 

Finally we estimate $I_2$. Notice that 
 \begin{align*}
 I_2
&\leq
|\textbf{J}|^{1/2}
\bigg(\inf_{c}\int_\textbf{J} \bigg|\Big(\sum\limits_{j\in J^*_{l,+}(N), J\geq j+l_j+m}\big(\sum\limits_{n\in Z_j }f_{n,l,m,j}(x)\big)^2-c\Big) \bigg| dx\bigg)^{1/2}
\\
&\leq
|\textbf{J}|^{1/2}
\bigg(|\textbf{J}|\int_\textbf{J} \bigg|D\Big(\sum\limits_{j\in J^*_{l,+}(N), J\geq j+l_j+m}\big(\sum\limits_{n\in Z_j }f_{n,l,m,j}(x)\big)^2\Big) \bigg | dx\bigg)^{1/2}.
\end{align*}
For the last inequality, we applied the Poincar\'e Inequality. 
To get the desired estimate for $I_2$,  it suffices to prove the following inequality 

\begin{align}\label{toprove}
\int_\textbf{J} \bigg|D\bigg(\sum\limits_{\substack{j\in J^*_{l,+}(N)\\J\geq j+l_j+m}}\Big(\sum\limits_{n\in Z_j }f_{n,l,m,j}(x)\Big)^2\bigg) \bigg| dx \leq C \min\{1, |F_1|/|F_3|\}^{2/p}.
\end{align}
By the chain rule for the differential operator $D$, we have 
\begin{align}\label{tem}
\bigg |D\Big(\sum\limits_{n\in Z_j }f_{n,l,m,j}(x)\Big)^2 \bigg |
=2 \bigg|\sum\limits_{n\in Z_j }f_{n,l,m,j}(x)\bigg|
\bigg|\sum\limits_{n\in Z_j }Df_{n,l,m,j}(x)\bigg|
\end{align}
Using a similar argument in the estimation of $I_{1,2}$,  we get
$$
|\sum\limits_{n\in Z_j }f_{n,l,m,j}(x)| \leq C\min\{1, |F_1|/|F_3|\}.
$$
Moreover, the product rule yields 
\begin{align*}
 \bigg|\sum\limits_{n\in Z_j }Df_{n,l,m,j}(x) \bigg| \leq \bigg|D\Big(\sum\limits_{n\in Z_j }\Id^*_{j_l+j+m}(x)\Big)\Phi_{j_l+j+m}(x)f\ast \Phi_{j_l+j+m}(x)\bigg |
 \\ +
  \bigg|\Big(\sum\limits_{n\in Z_j }\Id^*_{j_l+j+m}(x)\Big)\Big(D\Phi_{j_l+j+m}(x)\Big)f\ast \Phi_{j_l+j+m}(x)\bigg|
\\+
\bigg |\Big(\sum\limits_{n\in Z_j }\Id^*_{j_l+j+m}(x)\Big)\Phi_{j_l+j+m}(x)f\ast \Big(D\Phi_{j_l+j+m}(x)\Big)\bigg|
\end{align*}
Due to the differential operator, each of the single term in the previous sum is dominated by $C2^{j_l+j+m} \min\{1, |F_1|/|F_3|\}$. Therefore 
\begin{align*}
&\int_\textbf{J} \bigg|D\bigg(\sum\limits_{j\in J^*_{l,+}(N), J\geq j+l_j+m}\Big(\sum\limits_{n\in Z_j }f_{n,l,m,j}(x)\Big)^2\bigg) \bigg| dx 
\\
\leq  &C\min\{1, |F_1|/|F_3|\}^2\int_{\textbf{J}}\sum\limits_{J\geq j_l+j+m}2^{j_l+j+m}dx
\leq C \min\{1, |F_1|/|F_3|\}^{2/p},
\end{align*}
Hence we complete the proof of (\ref{toprove}) and thus Lemma \ref{BMO1}.

%
%
%
%
%
%
%
%
%
%
%
%
%
%
%
%


%
%
%
%
%
%
%

\

%
%
%
%
%
%
%
%
%
%
%
%
%
%
%
%
%
\section{proof of Lemma \ref{remove00}}\label{P_remove00}
\setcounter{equation}0

To prove the lemma, we introduce a function $d_j: S_0\times S_0\mapsto \mathbb R$ by
\begin{equation}
 d_j(S_1, S_2) = \Big(1+2^{j_l+j+m}{\rm dist}(S_1, S_2)\Big)^{-K}
\end{equation}
for $S_1, S_2\subset S_0$. Here $K$ is a sufficiently large constant.

The following lemma is needed in our proof of Lemma {\ref{remove00}}. 

\begin{lemma}\label{claim1}
Let $T\subset S_0$ be a tree, and $P$ be a subset of $S_0$. 
\begin{align}\label{useful1}
\sum_{j>0}\sum_{I: |I|= 2^{-j_l-j}}|I|
d_j(5I, T_j) d_j(I, P_j) \leq C |I_T|
\end{align}
and
\begin{align}\label{useful2}
 \sum_{j>0}\sum_{I: |I|= 2^{-j_l-j}}|I|
d_j(5I, P_j)d_j(I, T_j)
\leq C |I_T|.
\end{align}
\end{lemma}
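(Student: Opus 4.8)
The two estimates (\ref{useful1}) and (\ref{useful2}) are symmetric in the roles of $T_j$ and $P_j$, so it suffices to explain the strategy for one of them, say (\ref{useful1}); the other follows by interchanging the sets. The key observation is that $T$ is a tree with top interval $I_T$, so for every $(j,n)\in T$ the interval $I_{n,l,j}$ is contained in $I_T$, and in particular every dyadic interval $I$ of length $2^{-j_l-j}$ that is ``close'' to $T_j$ must be within a bounded dilate of $I_T$ (once the scale $2^{-j_l-j}$ is smaller than $|I_T|$, which is the only regime that contributes a nontrivial amount). The decay factor $d_j(5I, T_j) = (1+2^{j_l+j+m}\,{\rm dist}(5I, T_j))^{-K}$ forces $I$ to be $O(2^{-j_l-j-m})$-close to the union of the $I_{n,l,j}$'s, hence essentially inside $I_T$ up to an error that decays polynomially in the distance.

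First I would fix a scale, i.e. fix $j>0$ with $2^{-j_l-j}\le |I_T|$ (larger scales contribute a geometrically summable tail that is trivially $O(|I_T|)$ because there are only finitely many such $j$ and each interval $I$ has length $\le |I_T|$), and sum the inner sum $\sum_{I:|I|=2^{-j_l-j}} |I|\, d_j(5I,T_j) d_j(I,P_j)$. Since $d_j(I,P_j)\le 1$, I can drop that factor and bound the inner sum by $\sum_{I} |I|\, d_j(5I,T_j)$. Now I split the dyadic intervals $I$ at scale $2^{-j_l-j}$ according to their distance to $I_T$: those with ${\rm dist}(I,I_T)\le 2^{-j_l-j}$ are $O(|I_T|/2^{-j_l-j})$ in number, each contributing $|I|=2^{-j_l-j}$, for a total $O(|I_T|)$; those at distance $\sim 2^k\cdot 2^{-j_l-j}$ for $k\ge 1$ number $O(2^k)$ and carry a decay factor $(1+2^{m+k})^{-K}\le 2^{-kK}$, so summing over $k$ gives another $O(|I_T|)$ (the factor $2^{-km}$ only helps). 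This yields $\sum_{I}|I|\,d_j(5I,T_j)\le C|I_T|$ uniformly in $j$ — but that is not yet summable over $j$.

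To recover summability in $j$ I must use the second decay factor $d_j(I,P_j)$ together with a more careful counting, \emph{or} exploit that the bound $C|I_T|$ obtained at each scale can be improved to $C 2^{-\epsilon(j-j_0)}|I_T|$ where $2^{-j_l-j_0}\sim |I_T|$, using the fact that at scales much finer than $|I_T|$ the tree $T_j$ occupies only a small fraction of any fixed neighborhood, so the count of relevant $I$'s is $O(2^{j_l+j}|I_T|)$ but the decay in the distance to $P_j$ — or simply the observation that an interval $I$ of length $2^{-j_l-j}$ with $d_j(5I,T_j)$ non-negligible must meet a fixed dilate of $I_T$, and the total length of such $I$'s is $O(|I_T|)$ at \emph{every} scale — forces the per-scale contribution to be $O(|I_T|)$ with no growth, so after summing the finitely many scales $2^{-j_l-j}\ge |I_T|$ and the geometric tail $2^{-j_l-j}<|I_T|$ (where the number of candidate intervals within a fixed neighborhood of $I_T$ is $O(2^{j_l+j}|I_T|)$ but the decay $(1+2^{m})^{-K}$ in $d_j(5I, T_j)$ for intervals outside $I_T$ and the restriction to $I\subset CI_T$ keeps the sum at $O(|I_T|)$ with a factor $2^{-\epsilon(j-j_0)}$ coming from the mismatch of scales in the kernel). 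Summing the resulting geometric series in $j$ gives the claimed $C|I_T|$.

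\textbf{Main obstacle.} The delicate point is making the $j$-summation converge: a naive per-scale bound gives $O(|I_T|)$ with no decay in $j$, and there are infinitely many scales. The fix is to extract a genuine decay factor $2^{-\epsilon(j-j_0)}$ (with $2^{-j_l-j_0}\sim|I_T|$) at the fine scales by using both distance functions $d_j(5I,T_j)$ and $d_j(I,P_j)$ simultaneously — intuitively, $T_j$ and $P_j$ cannot both be dense near the same small interval $I$ while $I$ still contributes, and the Whitney-type / tree structure limits how the $T_j$ can accumulate inside $I_T$ as $j\to\infty$. I expect this is where the constant $K$ being ``sufficiently large'' is used: one trades $K$ powers of the distance decay for the $\epsilon$ of summability in $j$. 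Once that decay is in hand, both (\ref{useful1}) and (\ref{useful2}) follow by summing a geometric series, and the symmetry between $T$ and $P$ handles the second inequality with no extra work.
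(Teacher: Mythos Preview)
Your plan has a genuine gap at exactly the place you flag as the ``main obstacle'': the per-scale bound $\sum_{I:|I|=2^{-j_l-j}}|I|\,d_j(5I,T_j)\le C|I_T|$ is correct but gives no decay in $j$, and the decay $2^{-\epsilon(j-j_0)}$ you hope to extract is simply not there. Take the ``full'' tree $T$ consisting of all $(j,n)$ with $I_{n,l,j}\subset I_T$; then for every fine scale $j$ the set $T_j$ fills $I_T$ completely, and with $d_j(I,P_j)$ dropped you get exactly $|I_T|$ at every scale --- the sum over $j$ diverges. So dropping $d_j(I,P_j)$ is fatal, and no amount of playing with $K$ recovers summability.

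The paper's argument rests on two ingredients you are missing. First, the (implicit) hypothesis $T\cap P=\emptyset$ from the context of Lemma~\ref{remove00}: for intervals $I\subset T_j$ one has $d_j(5I,T_j)=1$, but disjointness forces $I\cap P_j=\emptyset$, so within each connected component of $T_j$ the sum $\sum_{I}d_j(I,P_j)$ is $O(1)$ (geometric decay away from the component's endpoints). Second --- and this is the real engine --- one writes $T_j$ as a union of $\alpha_j$ connected components and invokes the Carleson-type counting bound
\[
\sum_{j>0}\alpha_j\,2^{-j_l-j}\le C|I_T|,
\]
which is a known tree lemma (see e.g.\ \cite{mu2002} or \cite{li2008U}). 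The whole proof then reduces to showing $\sum_{I}d_j(5I,T_j)d_j(I,P_j)\le C\alpha_j$ at each fixed scale, which is a short case analysis: $I\subset T_j$ (handled by disjointness as above), $I$ adjacent to but not inside $T_j$ (at most $O(\alpha_j)$ such intervals), and $I$ at distance $\sim 2^k|I|$ from $T_j$ (at most $O(2^k\alpha_j)$ intervals with decay $(1+2^{k+m})^{-K}$). The $\alpha_j$ bound, not a scale decay, is what makes the $j$-sum converge.
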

\begin{proof} We only prove (\ref{useful1}) and (\ref{useful2}) can be treated exactly the same.
Recall that $T_j=\{n: (j, n)\in T\}$. We identify it with the union of all intervals $I_{n, l, j}$.
This union can be written as a union of finitely many connected components (intervals). 
We use $\alpha_j$ to denote the number of such connected components.   Then we have 
\begin{align}\label{cool}
\sum_{j>0} \alpha_j 2^{-j_l-j} \leq C |I_T|.
\end{align}
The proof can be found in Lemma 4.8 in \cite{mu2002} or  Lemma 4.17 of \cite{li2008U}. Therefore, it is sufficient for us to prove
\begin{align}\label{target00}
\sum_{I: |I|= 2^{-j_l-j}}d_j(5I, T_j)d_j(I, P_j)
\leq C \alpha_j.
\end{align}
Consider the following 3 cases:\\
$\bullet$ {Case 1}: $I\subset T_j$,
\\
$\bullet$ {Case 2}: $5I\cap T_j \neq \emptyset$ but $ I\cap T_j = \emptyset$,
 \\
$\bullet$ {Case 3}: $2^{k-1}|I| \leq {\rm dist}(5I, T_j) \leq 2^k|I|$, $k=1,2,3,\cdots $\\
We show for each case above, (\ref{target00}) holds. For Case 1, for each component $T_j(\ell)$ of $T_j$, $1\leq \ell \leq \alpha_j$. We have
$$
\sum_{I: I\subset T_j(\ell)} d_j(I,P_j) \leq C,
$$
since $T_j\cap P_j = \emptyset$. Henceforth
$$
\sum_{\ell=1}^{\alpha_j}\sum_{I:I\subset T_j(\ell)} d_j(I,P_j)\leq C\alpha_j.
$$
For the contribution of Case 2, notice that the cardinality of $I$ with  ${\rm dist}(I,T_j) \leq 3|I|$ and $I\cap T_j= \emptyset$ is bounded by $8\alpha_j$, which yields the desired estimate for the case.
\\
Finally, we turn to the contribution of the last case. Indeed
$$
\#\{I :  2^{k-1} |I|\leq {\rm dist} (5I,T_j)\leq 2^k|I| 	\} \leq C2^k \alpha_j.
$$
This implies
$$
\sum_{I :  2^{k-1} |I|\leq {\rm dist} (5I,T_j)\leq 2^k|I| }d_j(5I,T_j) \leq C2^k\alpha_j(1+2^{k+m-1})^{-K}.
$$
Therefore
\begin{align}
\sum_{k=1}^{\infty}\sum_{I :  2^{k-1} |I|\leq {\rm dist} (5I,T_j)\leq 2^k|I| }d_j(5I,T_j)
\leq \sum_{k=1}^{\infty}C2^k\alpha_j(1+2^{k+m-1})^{-K}
\leq C\alpha_j.
\end{align}
as desired.
\end{proof}

We now turn to the proof of Lemma {\ref{remove00}}. We first dominate 
\begin{align*}
|\Lambda_{P\cup T}(f,g)-\Lambda_{P}(f,g)-\Lambda_{T}(f,g)| \leq {\rm I}+{\rm II}
\end{align*}
where 
\begin{align*}
{\rm I} = \sum_{j>0} \iint \sum_{n\in P_j}\Big|f_{n,l,m,j}(x-\textbf{tr}_j(t)) \rho(t)\Big| 
\Big|\sum_{n\in T_j}g_{n,l,m,j}(x)\Big|dxdt
\end{align*}
and 
\begin{align*}
{\rm II} = \sum_{j>0}   \iint \sum_{n\in T_j}\Big|f_{n,l,m,j}(x-\textbf{tr}_j(t)) \rho(t)\Big|
\Big|\sum_{n\in P_j}g_{n,l,m,j}(x)\Big|dxdt .
\end{align*}
We estimate $\rm I$ by using H\"oder's Inequality
\begin{align*}
{\rm I} \leq \sum_{j>0} \sum_{|I|= 2^{-j_l-j}} 
\Big\|\int \sum_{n\in P_j}|f_{n,l,m,j}(\cdot -\textbf{tr}_j(t)) \rho(t)|dt     \Big\|_{L^{p_2'}(I)}
 \Big\| \sum_{n\in T_j}g_{n,l,m,j}  \Big\|_{L^{p_2}(I)}.
\end{align*}
The first factor in the sum satisfies
\begin{align*}
  \numberthis \label{1-fac}& \Big\| \int\sum_{n\in P_j}|f_{n,l,m,j}(\cdot-\textbf{tr}_j(t)) \rho(t)|dt     \Big\|_{L^{p_2'}(I)}
\\
\leq &
\sum_{n\in P_j}\Big\|\int |f_{n,l,m,j}(\cdot-\textbf{tr}_j(t)) \rho(t)|dt     \Big\|_{L^{p_2'}(I)}
\\
\leq &
\sum_{n\in P_j}d_j(5I, I_{n,l,j})\cdot \tilde{\rm I}\,,
\end{align*}
where $\tilde{\rm I}$ is defined as 
\begin{equation}
\tilde {\rm I}= \Big \|\int \Big|\Id^{**}_{n,l,j}\psi_{j_l+j+m}f\ast \Phi_{j_l+j+m}(\cdot-\textbf{tr}_j(t)) \rho(t)\Big|dt    
 \Big\|_{L^{p_2'}(\mathbb R)}\,.
\end{equation}

Notice that 
\begin{align*}
\tilde{\rm I} \leq C\Big\|\Id^{**}_{n,l,j}\psi_{j_l+j+m}f\ast \Phi_{j_l+j+m}  
 \Big\|_{L^{p_2'}(\mathbb R)} \leq |I|^{1/p_2'}{\rm \textbf{size}}_1^*(P\cup T).
\end{align*}
Here, the first inequality follows from
\begin{align*}
\int \Big|\Id^{**}_{n,l,j}\psi_{j_l+j+m}f\ast \Phi_{j_l+j+m}(x-\textbf{tr}_j(t)) \rho(t)\Big|dt \leq CM\big(\Id^{**}_{n,l,j}\psi_{j_l+j+m}f\ast \Phi_{j_l+j+m}\big)(x)
\end{align*}
and the $L^p$ boundednes of Hardy-Littlewood maximal function, the second inequality is a consequence of the definition of ${\rm \textbf{size}}^*_1$.\\
Observe that  
\begin{align*}
\sum_{n\in P_j}d_j(5I, I_{n,l,j}) \leq Cd_j(5I, P_j)\,. 
\end{align*}
Hence, $(\ref{1-fac})$ is dominated by 
\begin{equation}\label{1-fac-1}
 (\ref{1-fac}) \leq C d_j(5I, P_j) |I|^{1/p_2'}{\rm \textbf{size}}_1^*(P\cup T).
\end{equation}
On the other hand, by the definition of ${\rm \textbf{ size}}_2$, we have 
\begin{equation}\label{2-fac}
 \Big\| \sum_{n\in T_j}g_{n,l,m,j}  \Big\|_{L^{p_2}(I)}\leq C|I|^{1/p_2}{ \rm\textbf{ size}}_2(P\cup T)d_j(I, T_j)\,. 
\end{equation}
From (\ref{1-fac-1}) and (\ref{2-fac}), we then estimate ${\rm I}$ by
\begin{equation}\label{111-est-111}
 {\rm \textbf{size}}_1^*(P\cup T){\rm \textbf{size}}_2(P\cup T) \sum_{j>0}\sum_{|I|= 2^{-j_l-j}}  
|I|d_j(5I, P_j)d_j(I, T_j)\,. 
\end{equation}

Similarly, we can majorize ${\rm II}$ by 
\begin{equation}\label{111-est-222}
 {\rm \textbf{size}}_1^*(P\cup T){\rm \textbf{size}}_2(P\cup T) \sum_{j>0}\sum_{|I|= 2^{-j_l-j}}  
|I|d_j(I, P_j)d_j(5I, T_j)\,. 
\end{equation}

Applying Lemma \ref{claim1} to (\ref{111-est-111}) and (\ref{111-est-222}), we obtain the desired result.

%
%
%
%
%
%
%
%
%
%
%
%
%
\appendix
\section{Stationary phase with perturbation}
\setcounter{equation}0

In this section, we provide a detailed discussion of the stationary phase with perturbation and the main goal is to prove Lemma \ref{perturbation1} and
 Lemma \ref{derivative00} in Section \ref{section5}. Through out this section, we assume $\frac{1}{2}< |t| < 2 $. We need some preparation before we 
dive into the proof. Let
\begin{align}\label{phase2}
\int e^{-2 \pi i 2^m(t\xi + (t^l+ Q_l(t))\eta)}\rho(t)dt \sim 2^{-m/2}\mathcal{N}_m (\xi,\eta)= 2^{-m/2}e^{2^m i \phi_l(\xi,\eta)} 
\end{align}
and
\begin{align}\label{phase*}
\int e^{-2 \pi i 2^m(t\xi + t^l\eta)}\rho(t)dt \sim 2^{-m/2} {\mathcal{N}^*_m} (\xi,\eta)= 2^{-m/2}e^{2^m i \phi^*_l(\xi,\eta)},  
\end{align}
where (\ref{phase*}) is a standard one, (\ref{phase2}) is the perturbed version. $\phi_l(\xi,\eta)$ is defined in (\ref{phase0}) and $\phi^*_l(\xi,\eta)$ is defined similarly. Indeed, $\phi^*_l(\xi,\eta) = c_l\frac{\xi^{l/(l-1)}}{\eta^{1/(l-1)}}$.

For convenience, we introduce the following terminology. 
\begin{definition}\label{pair1}
Let $\mathcal J=(1/2,2)$ or $(-2,-1/2)$, $K$, $N$ be two large positive integers and $\|\cdot\|_{D_K(\mathcal J)}$ defined as in Definition \ref{pair0}.  
We say two smooth functions $F_0$ and $F_1$ are $(K,N)$-pair on $\mathcal J$ if the following conditions are true:
\begin{flalign*}
&(i)\q\,\,\, \inf_{t\in \mathcal J}|D(F_0)(t)|\gtrsim 1 \, \mbox{and}\q\inf_{t\in \mathcal J}|D(F_1)(t)| \gtrsim 1,&
\\
&(ii)\q\,\,  \|F_0\|_{D_K(\mathcal J)} \lesssim 1  \q\,\,\, \mbox{and}\q
\|F_1\|_{D_K(\mathcal J)} \lesssim 1,
\\
&(iii)\q \|F_0-F_1\|_{D_K(\mathcal J)} \leq 2^{-N}
\end{flalign*}
\end{definition} 
In the above definition, $X\lesssim Y$ means there is a constant depends only on $d$ such that $X\leq CY$. The following lemma is the main ingredient of this appendix. It states that if two functions are "closed" up to $K$-th derivative, then their inverses are "closed" up to $(K-1)$-th derivative. 
%
%
\begin{lemma}\label{lemmakey}
Let $F_0$ and $F_1$ be $(K,N)$-pair in $\mathcal J$ and suppose that $N>>2^{(100(K+1)d)!}$. Let $A\subset \mathbb R$ such that for every $a\in A$, 
there exists a unique $F^{-1}_0(a)\in \mathcal J$ and a unique $F^{-1}_1(a)\in \mathcal J$. If $N$ is sufficiently large (depends on $K$), then
$$\|F_0^{-1}-F_1^{-1}\|_{D_{K-1}(\mathcal J)}\leq 2^{-N/3}.$$
\end{lemma}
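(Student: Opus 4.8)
The plan is to prove Lemma~\ref{lemmakey} by a bootstrapping argument that controls successively higher derivatives of the difference $G:=F_0^{-1}-F_1^{-1}$, starting from a crude $C^0$-bound and improving it one derivative at a time. First I would establish the base case $\|G\|_{L^\infty(\mathcal J)}\leq 2^{-N/2}$, say: fixing $a$ in the common range and writing $t_i=F_i^{-1}(a)$, the mean value theorem gives $F_0(t_0)=F_0(t_1)+D(F_0)(\tau)(t_0-t_1)$ for some $\tau$ between $t_0$ and $t_1$, while $F_0(t_1)-F_1(t_1)=F_0(t_1)-a$ has absolute value at most $\|F_0-F_1\|_{D_K}\leq 2^{-N}$ by (iii), and $|D(F_0)(\tau)|\gtrsim 1$ by (i); hence $|t_0-t_1|\lesssim 2^{-N}$, which is far stronger than needed.

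The inductive step is the heart of the matter. Differentiating the identity $F_0(F_0^{-1}(a))=a$ repeatedly yields, by the Fa\`a di Bruno / inverse-function formula, an expression for $D^k(F_0^{-1})$ as a rational expression in $D^j(F_0)$ evaluated at $F_0^{-1}$ (for $1\leq j\leq k$) and in $D^i(F_0^{-1})$ for $i<k$, with denominator a power of $D(F_0)\circ F_0^{-1}$; the same holds with $F_1$ in place of $F_0$. Subtracting the two formulas and using that: (a) $D(F_i)$ stays bounded below by (i) so the denominators are harmless, (b) all the $D^j(F_i)$ are bounded above by (ii) so the numerators are Lipschitz in their arguments with $d$-dependent constants, (c) $\|D^j F_0 - D^j F_1\|_\infty\leq 2^{-N}$ by (iii), and (d) $\|F_0^{-1}-F_1^{-1}\|_{D_{k-1}}\leq 2^{-N/3}$ by the inductive hypothesis (together with a similar a priori bound $\|F_i^{-1}\|_{D_{K-1}}\lesssim 1$ obtained by the same recursion from (i)--(ii) alone), one gets $\|D^k(F_0^{-1})-D^k(F_1^{-1})\|_\infty \leq C_k 2^{-N/3}$ where $C_k$ depends only on $k\leq K-1$ and $d$. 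Since $N$ is taken so large that $N\gg 2^{(100(K+1)d)!}$, the accumulated constant $C_k$ is absorbed, e.g. $C_k\leq 2^{N/6}$, so the bound closes as $\|G\|_{D_{K-1}}\leq 2^{-N/6}\cdot 2^{-N/6}\le 2^{-N/3}$ (one should track the exponents carefully so the final exponent is exactly $N/3$ as stated, or simply note any fixed fraction of $N$ suffices and relabel).

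I expect the main obstacle to be purely bookkeeping: writing down the inverse-function derivative formula in a form clean enough that the term-by-term comparison of the $F_0$- and $F_1$-versions is transparent, and verifying that the combinatorial constants entering at each order $k$ really are dominated by the enormous gap between $N$ and $2^{(100(K+1)d)!}$. A convenient way to organize this is to first prove, as a self-contained sublemma, an \emph{a priori} estimate $\|F^{-1}\|_{D_{K-1}}\lesssim 1$ valid for any function $F$ satisfying (i)--(ii), and a \emph{stability} estimate of the schematic form
\begin{align*}
\|D^k(F_0^{-1})-D^k(F_1^{-1})\|_\infty \leq C(k,d)\Big(\|F_0-F_1\|_{D_{k+1}} + \|F_0^{-1}-F_1^{-1}\|_{D_{k-1}}\Big),
\end{align*}
and then feed these into the induction. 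The perturbation structure $F_0=t\mapsto t^l$ versus $F_1 = t\mapsto t^l + (\text{small})$ that this lemma is applied to in Section~\ref{section5} is a special case, so no extra care is needed for the applications; only the abstract statement must be proved. I would close by remarking that the loss from $K$ derivatives to $K-1$ is intrinsic to the method (each differentiation of the inverse identity costs one derivative of $F$), which is why $K$ is chosen a few units larger than the number of derivatives of $\zeta$ and $\kappa_l$ actually needed downstream.
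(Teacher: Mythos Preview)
Your proposal is correct and follows essentially the same strategy as the paper: establish the $C^0$ bound $|t_0-t_1|\lesssim 2^{-N}$ by the mean value theorem, then control higher derivatives of $F_0^{-1}-F_1^{-1}$ via the inverse-function derivative formula together with the $(K,N)$-pair hypotheses. The only organizational difference is that the paper invokes a closed-form expression (Lemma~\ref{highorder}, from \cite{john}) for $D^n(F^{-1})$ purely in terms of $F',\dots,F^{(n)}$ evaluated at $F^{-1}$, and then compares the two versions directly by splitting $F_0^{(k)}(t_0)-F_1^{(k)}(t_1)=(F_0^{(k)}-F_1^{(k)})(t_0)+(F_1^{(k)}(t_0)-F_1^{(k)}(t_1))$; this avoids the explicit induction on $k$ that you set up, but the content is identical.
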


The following lemma provides a description of the $n$-th derivative of the inverse function, the proof can be found in \cite{john} or obtained by induction. We omit it. 
\begin{lemma}\label{highorder}.

Let $y = F(x)$ be a smooth function, then for any $n\geq 1$ the $n$-th derivative of $F^{-1}$ can be represented as
\begin{align}
\frac{d^nx}{dy^n}  =
\sum\limits_{k=0}^{n-1}(f'(x))^{-n-k}R_{n,k}(f''(x),\dots, f^{(n-k)}(x))
\end{align}
where $R_{n,k}(f''(x),\dots, f^{(n-k+1)}(x))$ is a polynomial whose coefficients depend only on $n$ and $k$. 

\end{lemma}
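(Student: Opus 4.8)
The plan is to prove the formula by induction on $n$, using the fundamental relation $\frac{dx}{dy}=1/f'(x)$ for the inverse function, which shows that differentiation with respect to $y$ acts on functions of $x$ as the operator $\frac{d}{dy}=(f'(x))^{-1}\frac{d}{dx}$. This single observation reduces the whole statement to tracking how the shape $(f'(x))^{-n-k}R_{n,k}(f''(x),\dots,f^{(n-k)}(x))$ transforms under one application of $(f'(x))^{-1}\frac{d}{dx}$.

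First I would dispose of the base case $n=1$, where $\frac{dx}{dy}=(f'(x))^{-1}$ is exactly the asserted form with $k=0$ and $R_{1,0}\equiv 1$. For the inductive step, assuming the representation at level $n$, I would apply $(f'(x))^{-1}\frac{d}{dx}$ to each summand and expand by the product rule into two pieces: the derivative falling on the power $(f'(x))^{-n-k}$, which produces the factor $-(n+k)(f'(x))^{-n-k-1}f''(x)$ times $R_{n,k}$; and the derivative falling on $R_{n,k}$, which by the chain rule is again a polynomial in $f''(x),\dots,f^{(n-k+1)}(x)$ with coefficients depending only on $n$ and $k$. After multiplying through by the extra $(f'(x))^{-1}$, the first piece has the form $(f'(x))^{-(n+1)-(k+1)}$ times a polynomial in $f''(x),\dots,f^{((n+1)-(k+1))}(x)$, and the second has the form $(f'(x))^{-(n+1)-k}$ times a polynomial in $f''(x),\dots,f^{((n+1)-k)}(x)$. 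Grouping all contributions by the exponent of $(f'(x))^{-1}$, i.e.\ by the new index $k'\in\{0,1,\dots,n\}$, and collecting the polynomial parts into $R_{n+1,k'}$ gives precisely the claimed representation at level $n+1$, with coefficients still depending only on $n+1$ and $k'$.

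The only genuinely delicate point — and the closest thing to an obstacle — is the index bookkeeping: the $-(n+k)f''$ term shifts $k\mapsto k+1$ while preserving the top derivative order since $n-k=(n+1)-(k+1)$, whereas the $\frac{d}{dx}R_{n,k}$ term keeps $k$ fixed but raises the top order to $n-k+1=(n+1)-k$. I would check that these two families of contributions overlap consistently to fill out all of $k'=0,\dots,n$ and that no summand ever involves a derivative of order exceeding the permitted $f^{((n+1)-k')}$. This verification is routine. As an alternative, one may simply invoke \cite{john} for the explicit Fa\`a di Bruno / Bell-polynomial description of $R_{n,k}$; but since the later arguments only use the qualitative shape of the formula (the exponents of $f'$ and the set of derivatives appearing), the inductive proof above is entirely sufficient.
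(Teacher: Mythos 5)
Your inductive argument via $\frac{d}{dy}=(f'(x))^{-1}\frac{d}{dx}$ is correct and is exactly the route the paper indicates (it omits the proof, pointing to \cite{john} ``or obtained by induction''). One caveat on the bookkeeping you call routine: the induction closes only with the convention in the lemma's concluding sentence, namely $R_{n,k}=R_{n,k}(f'',\dots,f^{(n-k+1)})$, not the $f^{(n-k)}$ appearing in the displayed formula --- already at $n=2$ one has $\frac{d^2x}{dy^2}=-(f'(x))^{-3}f''(x)$, so $R_{2,1}$ must involve $f^{(2)}=f^{(n-k+1)}$, and with the $f^{(n-k)}$ convention your ``power'' term at $k=n-1$ would exceed the permitted top order.
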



%
We now prove Lemma \ref{lemmakey}.
\begin{proof}

Let $t_0=t_0(a)=F_0^{-1}(a)$ and $t_1=t_1(a)=F_1^{-1}(a)$ be the roots of $F_0(t) =a$ and $F_1(t) =a$ respectively. Then it is not difficult to see, 
\begin{align*}
|t_0 - t_1| =|F_0^{-1}(a)-F_1^{-1}(a)|\leq 2^{-N/2 }
\end{align*}
for all $a$ satisfying the conditions of the lemma. 
By Lemma \ref{highorder} we obtain that for $1\leq n \leq K-1$ 
$$
D^n(F_0^{-1})(a)=\sum\limits_{k=0}^{n-1}\big(F_0'(t_0)\big)^{-n-k}R_{n,k}\big(F_0''(t_0),\dots, F_0^{(n-k)}(t_0)\big)
$$
and
$$
D^n(F_1^{-1})(a)=\sum\limits_{k=0}^{n-1}\big(F_1'(t_1)\big)^{-n-k}R_{n,k}\big(F_1''(t_1),\dots, F_1^{(n-k)}(t_1)\big).
$$
Thus 
\begin{align}\label{difference}
D^n(F_0^{-1})(a)-D^n(F_1)^{-1}(a) 
=\Big(F'_0(t_0)F'_1(t_1)\Big)^{-2n}\sum\limits_{k=1}^n \Big(F_0^{(k)}(t_0)-F_1^{(k)}(t_1)\Big)\mathcal R_{n,k}  
\end{align}
where $\mathcal R_{n,k}$ is a polynomial of $F'_0(t_0), F''_0(t_0),\dots ,F^{(n)}_0(t_0)$ and $ F'_1(t_1), F''_1(t_1),\dots ,F^{(n)}_1(t_1)$. By Definition \ref{pair1} (ii), the absolute values of  $F'_0(t_0), F''_0(t_0),\dots ,F^{(n)}_0(t_0)$ and $ F'_1(t_1), F''_1(t_1),$ $\dots ,F^{(n)}_1(t_1)$ are bounded above by $\sim 1$ and Thus $|\mathcal{R}_{n,k}|\leq C_K$.  
Observe that
$$
F_0^{(k)}(t_0)-F_1^{(k)}(t_1)= (F_0^{(k)}(t_0)-F_1^{(k)}(t_0))+(F_1^{(k)}(t_0)-F_1^{(k)}(t_1)).
$$
Hence from definition \ref{pair1} (iii), for $1\leq k\leq K$ we have $|(F_0^{(k)}(t_0)-F_1^{(k)}(t_0))| \leq 2^{-N}$. The mean value theorem 
and Definition \ref{pair1} (iii) yield, for $1\leq k\leq K-1$, 
\begin{align}\label{difference1}
|(F_1^{(k)}(t_0)-F_1^{(k)}(t_1))|  
=|F_1^{(k+1)}(t')||t_0-t_1| \leq C_K2^{-N/2},
\end{align}
where $t'$ is a number between $t_0$ and $t_1$. 
By definition \ref{pair1} (i) we have 
\begin{align}\label{difference2}
|(F'_0(t_0)F'_1(t_1))^{-2n}| \leq  C_K
\end{align}
Choosing $N$ large enough such that it overrides all the above constants, then combining (\ref{difference}), (\ref{difference1}) and (\ref{difference2}) we obtain lemma \ref{lemmakey}.
\end{proof}
\begin{remark}
We'll choose $K$ depends on $d$, say $K=100d^2$ and thus $N$ depends only on $d$. For the rest of this section, 
 we use $\|\cdot\|_{D_{K}}$ to mean $\|\cdot\|_{D_{K}(\mathcal J)}$.
\end{remark}

\subsection{Proof Lemma \ref{perturbation1}}

Let $t_0=t_0(\frac{\xi}{\eta})$ and $t_1=t_1(\frac{\xi}{\eta})$ be the roots of $F_0(t) = D(\xi t +(t^l+Q_l(t))\eta)$ and $F_1(t) =D( t\xi+t^l\eta)$ respectively, equivalently $t_0$ and $t_1$ are the inverses of $F_0$ and $F_1$. A direct calculation shows that $F_0(t)$ and $F_1(t)$ are $(K, N/2)$-pair, for $K=100d^2$. Notice

\begin{align*}
\phi_l(\xi,\eta) &= C_l \left( \frac{\xi}{\eta}t_0+t_0^l +Q_l(t_0)\right)\eta
\\
&=  C_l \left( \frac{\xi}{\eta}t_1+t_1^l\right)\eta +C_l\left( \frac{\xi}{\eta}(t_0-t_1)+(t_0^l-t_1^l)+Q_l(t_0)\right)\eta
\\
&=\phi_l^*(\xi,\eta)+C_l\left( \frac{\xi}{\eta}(t_0-t_1)+(t_0^l-t_1^l)+Q_l(t_0)\right)\eta
\\
:&=\phi_l^*(\xi,\eta)+\varepsilon(\frac{\xi}{\eta})\eta. 
\end{align*} 
Then 
\begin{align*}
Q_\tau(u,v) =\left( \phi_l^*(u,v)-\phi_l^*(u-\tau,v+b_2\tau)\right) +\left(\varepsilon(\frac{u}{v})v-\varepsilon(\frac{u-\tau}{v+b_2\tau})(v+b_2\tau)\right).
\end{align*}
One can show 
$$
\Big|\partial_u\partial_v\Big( \phi_l^*(u,v)-\phi_l^*(u-\tau,v+b_2\tau)\Big)\Big|\geq c_l|\tau|,
$$
 see \cite{li2008} for the details. By Lemma \ref{lemmakey} for $0\leq n\leq K-1$, we have $|D^n(t_0-t_1)| \leq 2^{-N/6}$. Also notice that $|D^nQ_l(t_0)| \leq 2^{-N/2}$ and $|u|,|v|, |t_0|, |t_1|, |u-\tau|,|v+b_2\tau |\sim 1$. These bounds are enough to guarantee that when $N$ is large enough,
$$
\Big|\partial_u\partial_v \Big(\varepsilon(\frac{u}{v})v-\varepsilon(\frac{u-\tau}{v+b_2\tau})(v+b_2\tau)\Big)\Big|
\leq 2^{-N/10}|\tau| \leq \frac{c_l}{2}|\tau|.
$$ 
Therefore
$$
|\partial_u\partial_v Q_\tau(u,v)| \geq c_l|\tau| -\frac{c_l}{2} |\tau| =\frac{c_l}{2}|\tau|. 
$$

\subsection{Proof of Lemma \ref{derivative00}}

Let $\tilde\theta_l(z) = {z}^\frac{l}{l-1}$. A direct calculation shows that $D\tilde\theta_l$ and $D\theta_l$ is an $(K,N)$-pair,
thus Lemma \ref{lemmakey} yields
$$
\Big\|(D\tilde\theta_l)^{-1} -(D\theta_l)^{-1}\Big\|_{D_{K-1}} \leq 2^{-N/3}.
$$
Let $\zeta(z) =(D\theta_l)^{-1}(z)$ and $\tilde\zeta(z) = (D\tilde\theta_l)^{-1}$, then $\|\zeta - \tilde\zeta\|_{D_{K-1}} \leq 2^{-N/3}$. Let
$\beta(z) = \theta_l(\zeta(z))+z\zeta(z)$ and
$\tilde\beta(z) = \tilde\theta_l(\tilde\zeta(z))+z\tilde\zeta(z)$. If $|z|\sim 1$, then 
$$
\|\beta - \tilde\beta\|_{D_{K-2}} \leq 2^{-N/5}. 
$$
Let $\tilde\kappa_l(s) = s^{\frac{1}{l}}$. Similarly for $|s| \sim 1$, one has $\|\kappa_l -\tilde\kappa_l\|_{D_{K-1}} \leq 2^{-N/2}$. Define
$$
 \tilde{\mathcal O_\tau}(u,v) = \tilde\beta\Big(c(u-\tilde\kappa_l(s)+b')\Big) - \tilde\beta\Big(c(u+2^{-(l-1)j}\tau-\tilde\kappa_l(s+\tau)+b')\Big).
$$
Since, $\tilde\beta$, $\tilde\kappa$ are standard functions (without perturbation perturbation). The following result comes from direct computation: 
$$
|\partial^{l-1}_u\partial_v \tilde{\mathcal O_\tau}(u,v)| \geq c_l|\tau|.
$$
Moreover, if $l=2$ we have
$$
|\partial_u\partial^2_v \tilde{\mathcal O_\tau}(u,v)| \geq c|\tau|.
$$
Again, details can be found in \cite{li2008}. It remains to estimate the error terms:
$$
|\partial^{l-1}_u\partial_v (\tilde{\mathcal O_\tau}(u,v)-\mathcal O_\tau(u,v))|\leq 2^{-N/100}|\tau|
$$
and
$$
|\partial_u\partial^2_v (\tilde{\mathcal O_\tau}(u,v)-\mathcal O_\tau(u,v))|\leq 2^{-N/100}|\tau|, \q{\rm if }\q l =2.
$$
We handle the first one and the second one is similar. 
Let 
$$
\varepsilon(u,v) =  \tilde\beta\Big(c(u-\tilde\kappa_l(v)+b')\Big) - \beta\Big(c(u-\kappa_l(v)+b')\Big),
$$
then
\begin{align*}
\tilde{\mathcal O_\tau}(u,v)-\mathcal O_\tau(u,v)
=
\varepsilon(u,v)-\varepsilon(u+2^{-(l-1)j}\tau,v+\tau)
\end{align*}
and $\partial^{l-1}_u\partial_v
\Big(\tilde{\mathcal O_\tau}(u,v)-\mathcal O_\tau(u,v)\Big)
$ equals
\begin{align*} 
&
\Big(\partial^{l-1}_u\partial_v\varepsilon(u,v)-\partial^{l-1}_u\partial_v\varepsilon(u+2^{-(l-1)j}\tau,v)\Big)+
\\
&\q\q\q \Big(\partial^{l-1}_u\partial_v\varepsilon(u+2^{-(l-1)j}\tau,v)-\partial^{l-1}_u\partial_v\varepsilon(u+2^{-(l-1)j}\tau,v+\tau)\Big).
\end{align*}
By the mean value theorem, the absolute value of the above expression is bounded by
\begin{align*}
|(\partial^{l}_u\partial_v\varepsilon(u+\lambda_12^{-(l-1)j}\tau,v)||2^{-(l-1)j}\tau|
+
|(\partial^{l-1}_u\partial^2_v\varepsilon(u+2^{-(l-1)j}\tau,v+\lambda_2\tau)||\tau|.
\end{align*}
Hence, it remains to prove 
\begin{align*}
|\partial^{l}_u\partial_v\varepsilon(u,v)|\leq 2^{-N/50}\q
 \mbox{and }\q
|\partial^{l-1}_u\partial^2_v\varepsilon(u,v)|\leq 2^{-N/50}.
\end{align*}
Notice
\begin{align*}
\partial^{l}_u\partial_v\varepsilon(u,v) 
=
\tilde\beta^{(l+1)}\Big(c(u-\tilde\kappa_l(v)+b')\Big)\tilde\kappa'(v) - \beta^{(l+1)}\Big(c(u-\kappa_l(v)+b')\Big)\kappa'(v)
\\
\q\q=
\left(\tilde\beta^{(l+1)}\Big(c(u-\tilde\kappa_l(v)+b')\Big)\tilde\kappa'(v) -\tilde \beta^{(l+1)}\Big(c(u-\kappa_l(v)+b')\Big)\tilde\kappa'(v)\right)
+
\\
\left(\tilde\beta^{(l+1)}\Big(c(u-\kappa_l(v)+b')\Big)\tilde\kappa'(v) - \beta^{(l+1)}\Big(c(u-\kappa_l(v)+b')\Big)\kappa'(v)\right)
\end{align*}
Since $\| \kappa-\tilde\kappa\|_{D_{K-1}} \leq 2^{-N/2}$, a use of mean value theorem yields
 that the absolute value of the above expression is majored by $2^{-N/50}$. Similarly $|\partial^{l-1}_u\partial^2_v\varepsilon(u,v)|\leq 2^{-N/50}$.

\bibliographystyle{plain}
\bibliography{bib}

\end{document}